\documentclass{article}
\usepackage[left=2.5cm,right=2.5cm,top=2cm,bottom=2cm,a4paper]{geometry}
\usepackage{amsthm}
\usepackage{amsmath}	\usepackage{amssymb}	\usepackage[UKenglish]{babel}		\usepackage[colorlinks=true]{hyperref}
\usepackage{cleveref}	\usepackage{csquotes}	\usepackage[explicit]{titlesec}
\usepackage{titletoc}
\usepackage{graphicx}	\usepackage{mathrsfs}	\usepackage{mathtools}
\usepackage{setspace}
\usepackage{slashed}
\usepackage{xcolor}
\usepackage{tikz}
\usepackage{microtype}	\usepackage[backend=biber]{biblatex}
\usepackage{svg}
\usepackage{fancyhdr}	\usepackage{enumitem}	\MakeOuterQuote{"}

\newcounter{countall}

\theoremstyle{plain}
\newtheorem{definition}[countall]{Definition}
\newtheorem{theorem}[countall]{Theorem}

\newtheorem{lemma}[countall]{Lemma}
\newtheorem{prop}[countall]{Proposition}

\newtheorem*{definition*}{Definition}
\newtheorem*{theorem*}{Theorem}
\newtheorem*{remark*}{Remark}
\newtheorem*{lemma*}{Lemma}
\newtheorem*{prop*}{Proposition}
\newtheorem*{cor*}{Corollary}
\newtheorem*{example*}{Example}
\newtheorem*{notation*}{Notation}

\newenvironment{proclaimenumerate}{\begin{enumerate}[label=(\roman*),nosep]}
{\end{enumerate}}

\def\colormacro#1#2{{\color{#1}#2}}

\def\cbrak#1{\left\{#1\right\}}
\def\rbrak#1{\left(#1\right)}
\def\sbrak#1{\left[#1\right]}

\def\floor#1{\left\lfloor#1\right\rfloor}

\def\suchthat{\medspace\text{s.t.}\medspace}
\def\stbar{\medspace|\medspace}

\def\ie{{\it i.e.}, }
\def\eg{{\it e.g.}, }

\def\etc{{\it etc.}}

\def\adhoc{{\it ad-hoc}}

\def\al{\alpha}
\def\be{\beta}
\def\Ga{\Gamma}
\def\ga{\gamma}

\def\eps{\epsilon}

\def\la{\lambda}

\def\Si{\Sigma}
\def\si{\sigma}

\def\om{\omega}

\def\Om{\Omega}

\def\C{\mathbb{C}}

\def\R{\mathbb{R}}

\def\calA{\mathcal{A}}
\def\calB{\mathcal{B}}
\def\calC{\mathcal{C}}

\def\calE{\mathcal{E}}

\def\calL{\mathcal{L}}

\def\calV{\mathcal{V}}
\def\calW{\mathcal{W}}

\def\dist{\mathop{\mathrm{dist}}\limits}

\def\coker{\mathop{\mathrm{coker}}}
\def\tr{\mathop{\mathrm{tr}}}

\def\iso{\cong}

\def\Area{\mathop{\mathrm{Area}}}

\def\dvg{\mathop{\mathrm{div}}}

\def\into{\hookrightarrow}

\def\norm#1{\left\| #1 \right\|}
\def\abs#1{\left| #1 \right|}
\def\inner#1{\left\langle #1 \right\rangle}
\def\tensor{\otimes}

\def\ed{\mathrm{d}}\def\zb{{\overline{z}}}
\def\dz{\mathrm{d}z}
\def\dzb{\mathrm{d}\overline{z}}
\def\pd{\partial}
\def\pdz{{\partial_z}}
\def\pdzb{{\partial_{\overline{z}}}}
\def\deriv#1#2{\frac{\mathrm{d}{#1}}{\mathrm{d}{#2}}}
\def\pderiv#1#2{\frac{\partial{#1}}{\partial{#2}}}

\def\derivat#1#2#3{\frac{\mathrm{d}{#1}}{\mathrm{d}{#2}}\Big|_{#3}}

\def\dbar{\overline{\partial}}

\def\firstvar#1#2#3{\mathrm{d}{#1}\!\rbrak{#2}\sbrak{#3}}
\def\secondvar#1#2#3{\mathrm{d}^2{#1}\!\rbrak{#2}\sbrak{#3}}

\def\gadot{\dot{\gamma}}
\def\Ric{\mathop{\mathrm{Ric}}}
\def\Rm{\mathop{\mathrm{Rm}}}
\def\Vol{\mathrm{Vol}}

\def\tgt{\top}
\def\sff{\mathrm{II}}

\def\geomint#1#2{\int_{#1}\!#2}
\def\integ#1#2#3#4{\int_{#1}^{#2}\!{#3}\,\mathrm{d}{#4}}

\def\indx{\mathop{\mathrm{index}}}

\def\smooth#1{C^\infty\!\left(#1\right)}

\def\exclude#1{{\colormacro{gray}{\texttt{excluded text}}\\}}

	\def\cmc{{\sc cmc}}

\author{Luca Seemungal}
\title{Index Estimates for CMC and Minimal Surfaces with Capillary Boundary}

\newcommand{\rmn}{\ensuremath{\mathrm{n}}}

\begin{document}
\maketitle

\begin{abstract}
We prove that the index of a \cmc{} surface with capillary boundary is bounded from above linearly by its genus, number of boundary components, and branching order, and also by some Willmore-type energy involving the area, mean curvature, contact angle, and ambient curvature.
The main auxiliary theorem of more general interest is a comparison of the second variations of area and energy at a branched conformal map with boundary.
In the appendix we derive the various second variation formulae for area, enclosed-volume, and wetting functionals away from critical points and for non-admissible variations, the purpose of which is to rather comprehensively fill a gap in the literature.
\end{abstract}

\section{Introduction}

Let $\Si$ be a compact Riemann surface with boundary, and let $M^3$ be a Riemannian three-manifold with boundary.
It is well-known that, for \emph{boundaried (branched) conformal maps}\footnote{We call a map $u$ \emph{boundaried} if $u(\pd\Si)\subset\pd M$. We call a map \emph{branched} if $\ed u$ vanishes on a discrete subset of the domain and these singularities have the structure of branch points, as in \cite{gulliver-osserman-royden-1973}.} $u:\Si\to M$ , the area $\calA(u)$ and Dirichlet energy $\calE(u)$ agree up to zero'th and first order:
$$ u \text{ conformal} \implies \calA(u)=\calE(u) \text{ and } \ed\calA(u)\equiv\ed\calE(u). $$
One is therefore behoven to study the second order relationship between them.

Indeed, the idea of comparing the area and energy goes at least as far back as the work of J.~Douglas (see, for instance, \cite[Sec.~5.3]{douglas-1939}), but it was not until early in this millennium that N.~Ejiri and M.~Micallef performed the first detailed comparison of area and energy \cite{ejiri-micallef-2008}, where they computed the second order deficit $\secondvar{\rbrak{\calE-\calA}}{u}{v,v}$ for a \emph{minimal surface} $u$ (a conformal map which is also harmonic), in terms of a certain $\dbar$-problem dependent on $v$.

That such a comparison might be achievable is suggested by the following heuristic, firstly and elegantly exposited by Ejiri and Micallef, and only a brief outline of which we give here.
As is well known, the area is invariant the group of diffeomorphisms of the domain, but the energy is only invariant under the subgroup of conformal diffeomorphisms.
Starting therefore at a conformal map, one expects on the one hand that variations in the target that induce conformal diffeomorphisms will keep area and energy the same, whilst on the other hand that variations which do not induce conformal diffeomorphisms will incur some kind of \emph{conformal deficit} between the area and the energy.
Of course, that some variations \emph{cannot} induce conformal diffeomorphisms is revealed by the dimension of the moduli space of conformal structures on the domain.
Ejiri and Micallef formalised this heuristic and used the comparison to  prove index bounds for branched minimal surfaces.

For minimal surfaces, the second variation of area is given by $q=\abs{A}^2+\Ric(\nu,\nu)$, where $A$ is the second fundamental form, $\Ric$ is the ambient Ricci curvature, and $\nu$ is the normal to the minimal surface.
Unfortunately, the fact is that, given $L=-\Delta-q$ a Schrödinger operator on a two-dimensional domain\footnote{This is in contrast to higher dimensional domains, where one does have the Cwikel--Lieb--Rosenbljum inequality, see for instance \cite{li-yau-1983}.} $\Om$, it is \emph{not} possible (as simple examples of $q$ show) to have a general inequality of the form (here $i_L$ is the number of negative eigenvalues of $L$)
$$ i_L \leq C\geomint{\Om}{\abs{q_+}}; $$
such an inequality would give a bound of the desirous form $i_{\calA}\leq C(M)\calA(u) + Cg$, where $C(M)$ depends on $M$ and $g$ is the genus of the surface.
Fortuitously, there are by now established methods to bound the index of the energy.
For the sake of exposition, we infidelitiously reduce the work of Ejiri--Micallef down to two steps.
Firstly, the area--energy comparison $\secondvar{\rbrak{\calE-\calA}}{u}{v,v}$ gives the comparison of area and energy indices $i_{\calE} \leq i_{\calA} \leq i_{\calE}+r$, where $r$ (simply put) is the dimension of the Teichmüller space, and therefore a topological-branching term.
Secondly, one bounds $i_{\cal{E}}$ by relevant geometric quantities (area in the compact case) using heat-kernel methods \cite{cheng-tysk-1994}.
The upshot is that one gets the desired index bounds of the form (for, say, compact minimal surfaces) $i_{\calA} \leq C\calA(u) + r$, where $C$ is a constant depending only on the ambient space $M$ and $r$ is some topological term.
Let us also mention that V.~Lima has carried out this procedure for free-boundary minimal surfaces \cite{lima-2022}, computing firstly the second variation comparison $\secondvar{\rbrak{\calE-\calA}}{u}{v,v}$ at a free-boundary minimal surface, and secondly applying these heat-kernel methods to deduce the bounds.

Most of that which I have hitherto described has assumed the surface to be minimal (which, one recalls, is a conformal and harmonic map).
A brief glance at the heuristic however reveals that it should apply to arbitrary conformal maps.
Indeed, together with B.~Sharp, we computed \cite{seemungal-sharp-2026} the second variation comparison $\secondvar{\rbrak{\calE-\calA}}{u}{v,v}$ at an arbitrary conformal map, but without boundary.
Yet another brief glance reveals that it should also apply to arbitrary conformal maps with boundary, which is our main auxiliary theorem below.
The proof we give is different to that in \cite{seemungal-sharp-2026}, in addition to dealing with multitude of terms appearing on the boundary.
\begin{theorem*}[Theorem \ref{thm:area-energy-comparison} in main body]
Let $M$ be a Riemannian three-manifold (possibly with boundary), let $\Si$ be a Riemann surface (possibly with boundary), and suppose that $u:\Si\to M$ is a boundaried branched conformal immersion.
Then, for any variation $\Phi:I\times\Si\to M$ of $u$ such that $\Phi(I,\pd\Si)\subset\pd M$, we have
\begin{equation*}
\derivat{^2}{t^2}{t=0}{\rbrak{\calE - \calA}\rbrak{\Phi(t)}} = 4\int_\Si\!\abs{\mu}^2\,\ed\Si,
\end{equation*}
where, denoting $v:=\Phi'(0)\in\Ga_{\pd M}(u^*TM)$ and splitting $v=s+\si$ into $s:=v^\perp$ and $\si:=v^\tgt$,
$$ \mu = \rbrak{
	\nabla^\tgt_\pdz \si^{\rbrak{0,1}}
	- 2e^{-2\la}\inner{s,A(u_z,u_z)}u_\zb
}\tensor\dz $$
is the \emph{infinitesimal conformal deficit} of $v$.
\end{theorem*}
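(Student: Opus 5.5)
The approach is a pointwise computation. The integrands of $\calE$ and $\calA$ are local, so $\calE-\calA$ has a closed-form density that vanishes to second order at a conformal map. Differentiating this density twice in $t$ under the integral sign should produce $4\abs{\mu}^2$ directly. No integration by parts is needed, so the boundary condition $\Phi(I,\pd\Si)\subset\pd M$ only serves to make the variation admissible, and none of the boundary terms from the appendix enter.

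\textbf{Step 1 (algebraic identity).} Work in a local conformal coordinate $z=x+iy$. For a map $w$ put $a=\abs{w_x}^2$, $b=\abs{w_y}^2$, $c=\inner{w_x,w_y}$ and $\varphi=\inner{w_z,w_z}=\tfrac14(a-b-2ic)$. The energy density is $e=\tfrac12(a+b)$ and the area density is $\alpha=\sqrt{ab-c^2}$. Since $e^2-\alpha^2=\tfrac14(a-b)^2+c^2=4\abs{\varphi}^2$, we get
$$ e-\alpha=\frac{4\abs{\varphi}^2}{e+\alpha}. $$
Apply this to $w=\Phi(t)$. At $t=0$ the map is conformal, so $\varphi_0=0$ and $e_0=\alpha_0=e^{2\la}$. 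Hence, at every non-branch point,
$$ \derivat{^2}{t^2}{t=0}(e-\alpha)=\frac{8\abs{\dot\varphi}^2}{2e^{2\la}}, $$
where the dot denotes the $t$-derivative at $t=0$. By torsion-freeness $\nabla_t\Phi_z=\nabla_z\Phi_t$, so $\dot\varphi=2\inner{\nabla_\pdz v,u_z}$. The second variation density is therefore $16e^{-2\la}\abs{\inner{\nabla_\pdz v,u_z}}^2\,\ed x\,\ed y$.

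\textbf{Step 2 (identification with $\mu$).} Split $v=s+\si$ and $\si=\si^{(1,0)}+\si^{(0,1)}$, with $\si^{(1,0)}=f u_z$ and $\si^{(0,1)}=g u_\zb$. Then
\begin{itemize}
\item $\inner{\nabla_\pdz s,u_z}=-\inner{s,A(u_z,u_z)}$;
\item $\inner{\nabla_\pdz(fu_z),u_z}=0$, because $\inner{u_z,u_z}=0$ and hence $\inner{\nabla_\pdz u_z,u_z}=\tfrac12\pd_z\inner{u_z,u_z}=0$;
\item $\inner{\nabla_\pdz v,u_z}=\inner{\nabla^\tgt_\pdz\si^{(0,1)},u_z}-\inner{s,A(u_z,u_z)}$.
\end{itemize}
Conformality puts $\nabla^\tgt_\pdz\si^{(0,1)}$ in the $u_\zb$-direction, because the $u_z$-component of $\nabla_\pdz(gu_\zb)$ comes from $\inner{\nabla_\pdz u_\zb,u_\zb}=\tfrac12\pd_z\inner{u_\zb,u_\zb}=0$. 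Pairing with $u_z$, using $\inner{u_\zb,u_z}=\tfrac12e^{2\la}$, then recovers the coefficient of $\mu$. The factor $-2e^{-2\la}\inner{s,A(u_z,u_z)}u_\zb$ in $\mu$ is exactly the $u_\zb$-vector whose pairing with $u_z$ is $-\inner{s,A(u_z,u_z)}$. It then remains to check the normalisations:
\begin{itemize}
\item $\abs{u_\zb}^2=\tfrac12e^{2\la}$;
\item $\abs{\dz}^2=2e^{-2\la}$;
\item $\ed\Si=e^{2\la}\ed x\,\ed y$.
\end{itemize}
With these, $16e^{-2\la}\abs{\inner{\nabla_\pdz v,u_z}}^2\,\ed x\,\ed y=4\abs{\mu}^2\,\ed\Si$. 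This is routine bookkeeping.

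\textbf{Step 3 (justification and the main obstacle).} The hard part is differentiating twice under the integral near branch points and near $\pd\Si$. At a branch point $e_0=\alpha_0=0$ and $\alpha$ is not smooth in $t$. I would use the fact that $e-\alpha\ge 0$ equals the smooth quotient above wherever $e+\alpha>0$. Near a branch point of order $m$ we have $u_z\sim z^m$, so both $\varphi_t=O(t\abs{z}^m)$ and $\inner{\nabla_\pdz v,u_z}=O(\abs{z}^m)$. This gives a uniform bound $\abs{\varphi_t}^2/(e_t+\alpha_t)\le Ct^2$ with an integrable (indeed bounded) $C$, which justifies the second-order Taylor expansion in $t$ via dominated convergence. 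Near $\pd\Si$ the computation is purely interior and pointwise, so compactness of $\Si$ up to the boundary gives the integrability. Away from this justification, the whole proof is the algebraic identity of Step 1.
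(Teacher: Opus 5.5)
Your proof is correct, and it is a genuinely different and much shorter route than the paper's.

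\textbf{How the two proofs differ.} The paper starts from the two second-variation formulae and subtracts them to get the density $\calC(u)[v,v]=\abs{(\nabla v)^\tgt}^2-(\dvg_\Si v)^2+\inner{\nabla_{E_i}v,E_j}\inner{E_i,\nabla_{E_j}v}$. It then uses the Gauss and Codazzi equations and complexification to write $\calC$ as $8e^{-2\la}\abs{\eta}^2$ plus several divergence terms. After integrating by parts, it checks by hand that the resulting integrals over $\pd\Si$ cancel: the mixed $\si,s$ terms vanish pointwise, and the $\si$-only terms reduce to a total $\tau$-derivative.

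Your identity $e-\alpha=4\abs{\varphi}^2/(e+\alpha)$ shows none of this is needed. Writing $B_{ij}=\inner{\nabla_{E_i}v,E_j}$, one has pointwise
$$\calC=(B_{11}-B_{22})^2+(B_{12}+B_{21})^2=16e^{-4\la}\abs{\inner{\nabla_\pdz v,u_z}}^2=8e^{-2\la}\abs{\eta}^2=4\abs{\mu}^2 .$$
Three consequences follow:
\begin{itemize}
\item the paper's divergence terms must sum to zero identically;
\item no curvature of $M$ or $\Si$ enters;
\item the hypothesis $\Phi(I,\pd\Si)\subset\pd M$ plays no role, since the identity holds for arbitrary variations.
\end{itemize}
Your route also makes the heuristic stated after the theorem literal: $\dot\varphi=2\inner{\nabla_\pdz v,u_z}$ is exactly the infinitesimal change of $\inner{\Phi_z,\Phi_z}$. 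The paper's route mirrors its closed-surface computation and reuses its appendix formulae, but buys nothing extra for this statement. Conversely, you address branch points, which the paper passes over in silence.

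\textbf{Two repairs needed in Step 3.}

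First, $\varphi_t=O(t\abs{z}^m)$ is false. $\varphi_t$ has a term $t^2\inner{\nabla_\pdz v,\nabla_\pdz v}$ that does not vanish at the branch point. Also, for $t\neq0$, $e_t$ has no lower bound of the form $c\abs{z}^{2m}$ near the branch point. So your stated reasons do not yield the uniform bound. The bound $0\le e_t-\alpha_t\le Ct^2$ is nevertheless true, and can be proved as follows.
\begin{itemize}
\item Set $p_t=\Phi_z(t)$, viewed in $\R^d$ with the complex-bilinear inner product. Since $\inner{p_0,p_0}=0$, we have $\varphi_t=\inner{p_t-p_0,p_t+p_0}$, with $\abs{p_t-p_0}\le C\abs{t}$ and $e_t=2\abs{p_t}^2$.
\item If $\abs{p_0}\le2\abs{p_t}$, then $\abs{\varphi_t}\le3C\abs{t}\abs{p_t}$, so $e_t-\alpha_t\le4\abs{\varphi_t}^2/e_t\le18C^2t^2$.
\item If $\abs{p_0}>2\abs{p_t}$, then $\abs{p_t}<\abs{p_t-p_0}\le C\abs{t}$, so $e_t-\alpha_t\le e_t\le2C^2t^2$.
\end{itemize}

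Second, dominated convergence then gives $(\calE-\calA)(\Phi(t))=2t^2\int_\Si\abs{\mu}^2\,\ed\Si+o(t^2)$. This is a second-order Peano expansion, so it identifies the second derivative only once that derivative is known to exist, as the statement presupposes. For unbranched $u$ the density is smooth in $(t,z)$ for small $t$, and the issue does not arise.
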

\begin{remark*}
\begin{proclaimenumerate}
\item For an elucidation of the so-called "infinitesimal conformal deficit" $\mu$, see Section \ref{sec:comparison}.
Suffice it to say for now that $\mu$ should be viewed as a PDE which asks, given a normal variation $s$, to find a tangential variation $\si$ which reparametrises the domain so as to stay conformal (at the infinitesimal level, of course).
\item What we have written above is a comparison of the \emph{two derivatives} of $\calE$ and $\calA$, which (since $u$ is not necessarily a critical point of $\calE$ nor $\calA$) is not the same as the comparison of the ``second variation'' or \emph{Hessian} of $\calE$ and $\calA$.
Indeed, it is not clear from the outset that the left-hand side is even bilinear.
Nevertheless, the right-hand side is bilinear, so the non-bilinearities in the second derivatives must cancel precisely (as one can see in the proof).
For this reason, if one wished to write the comparison as one of the Hessians of $\calE$ and $\calA$ over some infinite-dimensional space of variations, then one could readily deduce this from what we have here, even though this is not necessary in this paper.
\item Weaker forms of this area--energy comparison for minimal surfaces has appeared in various works, (for instance, \cite{cheng-zhou-2023,gao-zhu-2024}), where the computations are much simplified by somewhat weaker theorem statements (\ie{}the area--energy comparison is an inequality rather than an equality) and with assumptions about the target or the domain being particularly special (\eg{}a sphere), and assumptions on the map $u$ (being harmonic).
\end{proclaimenumerate}
\end{remark*}

As alluded to above, the second variation comparison gives rise (after some argumentation) to the comparison of area and energy indices, Theorem \ref{thm:index-area-energy-comparison}.
Adapting the various techniques that appear in the literature, we bound the energy index in Theorem \ref{thm:energy-index-bound}.
Combining these two theorems gives our main theorem below.
We adopt the term \emph{$h$-\cmc{} surface with $\theta$-capillary boundary} for a \cmc{} surface with mean curvature $h>0$ and contact angle $\theta\in(0,\pi/2)$; see Section \ref{sec:prelims} for details.

\begin{theorem}
\label{thm:main}
There exists a constant $C>0$ such that for any Riemannian manifold $M$ with boundary, isometrically embedded into some Euclidean space $\R^d$, and for any oriented Riemann surface $\Si$ of genus $g$ with $m\neq0$ boundary components, if $u:\Si\to M$ is an $h$-\cmc{} surface with $\theta$-capillary boundary in $\pd M$ then
$$ i_\Si \leq C\rbrak{1+\frac{1}{\sin\theta}}^2(J^2 + B^2 + h^2)\calA(u) + r $$
where $h$ is the mean curvature of $u$, $J$ is the sup-norm of the largest eigenvalue of the second fundamental form of the aforementioned isometric embedding $M\into\R^d$, $B$ is the sup-norm of the largest eigenvalue of the second fundamental form of $\pd M$ in $M$, $\calA(u)$ is the (mapping-)area of $u$, and
$$ r =
\begin{cases}
6g - 6 + 3m - 2b - d &\text{if }2b+d<4g-4+2m,\\
4g - 2 + 2m - 2b + 2\floor{-d/2} &\text{if } 4g-4+2m\leq2b+d\leq8g-8+4m,\\
0 &\text{if }8g-8+4m<2b+d,
\end{cases}
$$
where $b$ is the interior branching order and $d$ is the boundary branching order.
\end{theorem}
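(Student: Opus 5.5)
The proof combines the two theorems announced in the introduction. First, the index comparison (Theorem \ref{thm:index-area-energy-comparison}) gives $i_\Si \leq i_\calE + r$. Second, the energy index bound (Theorem \ref{thm:energy-index-bound}) controls $i_\calE$ by the Willmore-type quantity. The work lies in setting up each step so that the constants come out as stated.

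For the first step I would use the area--energy comparison (Theorem \ref{thm:area-energy-comparison}). At a critical point of the capillary functional, restricted to volume-preserving (or weighted) admissible variations, the wetting and enclosed-volume terms are identical in the area and energy functionals. So the second variation comparison reads
\[
\secondvar{\calE_{\mathrm{cap}}}{u}{v,v} - \secondvar{\calA_{\mathrm{cap}}}{u}{v,v} = 4\int_\Si\!\abs{\mu}^2\,\ed\Si .
\]
Hence $\secondvar{\calE}{u}{v,v}\leq\secondvar{\calA}{u}{v,v}$ on all admissible $v$. Conversely, take a normal variation $s$ on which the area Hessian is negative. If we can solve $\mu(s+\si)=0$ for a tangential $\si$ tangent to $\pd M$ along $\pd\Si$, then $v=s+\si$ has the same area Hessian (tangential variations change nothing at a critical point) and a negative energy Hessian. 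Solving $\mu=0$ is a $\dbar$-problem $\nabla^\tgt_\pdz\si^{(0,1)}=$ (prescribed Beltrami-type datum) with a totally real boundary condition. Its obstruction space is the cokernel, which we identify by Riemann--Roch for surfaces with boundary (via the doubling) with holomorphic quadratic differentials. These are real along $\pd\Si$ and vanish to the appropriate order at the branch points. Counting the dimension of this space gives exactly $r$, with its three regimes according to how $2b+d$ compares with $4g-4+2m$ and $8g-8+4m$; this is the content of that theorem, which I take as given. So $i_\calA\leq i_\calE + r$, where $i_\calE$ is the index of the energy capillary functional. The relevant index is that of the volume-constrained (weak) problem, or of the strong problem at the cost of adding at most $1$ to the bound, which is absorbed into $C$.

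For the second step, I bound $i_\calE$ as in Theorem \ref{thm:energy-index-bound}. Embed $M\into\R^d$. The energy Hessian is the quadratic form
\[
\int_\Si \abs{\nabla v}^2 - \inner{\mathrm{II}(\ed u,\ed u)\text{-terms}, v} - \text{(mean curvature terms)}\cdot v
\]
plus boundary terms. The interior potential is bounded by $C(J^2+h^2)\abs{\ed u}^2$. The boundary terms come from $B$ and the contact angle. Using the capillary condition, the boundary terms involve $\frac{1}{\sin\theta}$ times $B$ and $h$. I would convert the boundary integral into an interior one by a trace inequality: choose a vector field that equals the outward conormal on $\pd\Si$ and integrate its divergence. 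This yields a potential of size $(1+\frac{1}{\sin\theta})^2(J^2+B^2+h^2)$ against the conformal density $\abs{\ed u}^2$. The index of $-\Delta_{\mathrm{flat}}$ minus such a potential, for vector-valued maps in conformal coordinates, is then counted by the Cheng--Tysk / Lima heat-kernel argument. Equivalently, one can use an Ejiri--Micallef style covering argument in the conformally induced metric, which bounds the number of negative eigenvalues by $C\int_\Si q\,\abs{\ed u}^2/2 \le C(\ldots)\calA(u)$.

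The main obstacle is this second step: handling the boundary terms uniformly, with a universal constant, while the metric degenerates at branch points. I expect the hardest part to be the trace-inequality absorption that produces the $(1+1/\sin\theta)^2$ factor. Once these are in place, adding the two inequalities gives the stated bound.
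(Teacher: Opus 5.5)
Your top-level argument is exactly the paper's. Theorem \ref{thm:main} is obtained there simply by adding Theorem \ref{thm:index-area-energy-comparison} ($i_\Si\le\indx(Q_\calE)+r$) to Theorem \ref{thm:energy-index-bound}, so citing those two results completes the proof. The trouble is in the detail you add around them.

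\textbf{First step.} The identity $\secondvar{\calE}{u}{v,v}-\secondvar{\calA}{u}{v,v}=4\int_\Si\abs{\mu}^2\,\ed\Si\ge0$ gives $\secondvar{\calE}{u}{v,v}\ge\secondvar{\calA}{u}{v,v}$, not $\le$. Your ``conversely'' argument is the direction actually needed. In the capillary case the correction $\si$ is \emph{not} tangent to $\pd M$ along $\pd\Si$. Since $\inner{\nu,N}=-\cos\theta\ne0$, it is $s+\si$ that must be tangent, which gives the inhomogeneous condition $\inner{\si,\rmn}=\cot\theta\inner{s,\nu}$. The paper therefore shifts by an auxiliary field $\zeta_s$ to get a homogeneous totally real condition before the Fredholm alternative and Riemann--Roch apply. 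The resulting count gives $\dim_\R\ker D^*\le r$, not equality. Finally, no ``$+1$'' arises. By Definition \ref{def:index}, $i_\Si$ is the index of $Q$ on all of $\smooth{\Si}$, and the comparison is made on all of $\Ga_{\pd M}(u^*TM)$ with no constraints. In any case, an additive constant cannot be absorbed into $C(1+1/\sin\theta)^2(J^2+B^2+h^2)\calA(u)$ without a separate positive lower bound on that quantity.

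\textbf{Second step.} Your sketch of the energy-index bound is where you actually depart from the paper, and as written it would not give the stated estimate.
\begin{itemize}
\item The boundary term of the energy Hessian (Lemma \ref{lem:hess-mod-energy}) is just $-\sin\theta\inner{A^{\pd M}(\gadot,\gadot),N}\abs{v}^2\ge-B\abs{v}^2$. It contains neither $h$ nor $1/\sin\theta$. The terms $\frac{1}{\sin\theta}\inner{A^{\pd M}(\hat\nu,\hat\nu),N}$ and $\cot\theta\inner{A(\rmn,\rmn),\nu}$ belong to $Q$, and the latter (like $\abs{A}^2$) is not controlled by $h$. Removing them is the whole point of passing to $Q_\calE$.
\item In the paper, $(1+1/\sin\theta)^2$ enters exactly once, through the Sobolev inequality of Lemma \ref{lem:sobolev}. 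There $\int_{\pd\Si}\abs{f}$ is estimated by the first variation formula applied to an \emph{ambient} extension $X_\eps$ of $N$ (Proposition \ref{prop:extend-normal}), using $\inner{N,\rmn}=\sin\theta$. That inequality drives the on-diagonal bound for the Neumann heat kernel. The boundary term of $Q_\calE$ is kept as the Robin condition $(\nabla_\rmn-B\sin\theta)v=0$ and handled by comparing the bundle and scalar heat kernels.
\item Your trace inequality instead uses a field on $\Si$ extending the conormal $\rmn$. Its divergence depends on the collar geometry of $\pd\Si$ inside $\Si$ (for instance the geodesic curvature of $\pd\Si$, which involves $A(\gadot,\gadot)$), and it degenerates at boundary branch points. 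So it admits no bound in terms of $J,B,h,\theta$ alone.
\item If your potential really carried $(1+1/\sin\theta)^2$, the subsequent Neumann heat-kernel count, which itself needs Lemma \ref{lem:sobolev}, would multiply in a second such factor.
\item A trace route can be made to work with the ambient field. Since $B\sin\theta\int_{\pd\Si}\abs{v}^2=B\int_{\pd\Si}\abs{v}^2\inner{X_\eps,\rmn}$, the $\sin\theta$ cancels and you get a $\theta$-independent interior potential plus a Neumann problem. But this brings in $\abs{\nabla X_\eps}\lesssim1/\rho$, so the focal radius must be tracked as in Lemma \ref{lem:sobolev}.
\item There is no general bound on the number of negative eigenvalues by $C\int q$ for two-dimensional Schr\"odinger operators, as the introduction points out. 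The count works only because of the Sobolev inequality of the immersion, so a ``covering argument'' is not an equivalent substitute without it.
\end{itemize}
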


An easy modification (in fact, a simplification) of the arguments presented in this article yield the corresponding index bounds on branched minimal surfaces with capillary boundary.
These are novel, generalising previous results of Lima \cite{lima-2022} which apply to unbranched minimal surfaces with standard free boundary contact angle $\theta=\pi/2$.

\paragraph{Acknowledgements.}
The author is greatly indebted to B. Sharp for countless illuminating discussions regarding this work, and to M. Micallef for his interest, encouragement, and advice.
This work has been supported by the \textsc{epsrc} grant \texttt{EP/W523860/1}, project reference \texttt{2758306}.

\section{Preliminaries}
\label{sec:prelims}

In this section we record our notations, conventions, and definitions.
Throughout, unless otherwise stated, $\Si$ will be a compact Riemann surface-with-boundary of genus $g$, and $M$ will be a compact Riemannian three-manifold with boundary.
We call a map $u:\Si\to M$ \emph{boundaried} if $u(\pd\Si)\subset\pd M$.

Let $u:\Si\to M$ be a boundaried branched conformal immersion.
In accordance with the Riemannian metric then, the pull-back tangent bundle splits $u^*TM=\xi\oplus\nu\Si$, where $\xi$ is the ramified tangent bundle, and $\nu\Si$ is the normal bundle to $\Si$.
The ramified tangent bundle $\xi$ is the usual tangent bundle of $\Si$ which is "twisted at the branch points"; this phenomenon is perhaps best illustrated using complex parameters.
Indeed after complexifying $\xi$, we see that $\xi^{1,0}=T^{1,0}\Si\tensor[D]$ where $D$ is the divisor corresponding to the branch points and $[D]$ is the associated line bundle.
The natural correspondence then (by the complex parameter) between the real tangent bundle $\xi$ and the holomorphic tangent bundle $\xi^{1,0}$ elucidates precisely what is meant by "twisting the tangent bundle".
Indeed, we shall be making local computations in a complex parameter $z=x+iy$, where the pull-back metric takes the form $u^*g=e^{2\la}\rbrak{\ed x^2+\ed y^2}$, along with the standard notations $\pd_z=\frac{1}{2}\rbrak{\pd_x-i\pd_y}$ \etc{}
We denote by $\nabla$ both the Levi-Civita connection on $TM$ and the pull-back by $u$ of the same; moreover, $\tgt$ denotes tangential projection onto $\Si$, $\perp$ denotes projection normal to $\Si$, and $A=\rbrak{\nabla_\cdot\cdot}^\perp$ denotes the second fundamental form of $u:\Si\to M$.

At the boundary of our immersions, one has the various normals, conormals, \etc, whose definition depends on various choices of orientation; we set this all here in the following way.
Perhaps it is clearer to refer to \cref{fig:normals}.
\begin{center}
\begin{tabular}{p{0.05\linewidth} p{0.8\linewidth}}
$\dot{\ga}$	& The unit tangent defined, in a local chart mapping the upper-half-plane to $\Si$ and the real axis to $\pd\Si$, by $\dot{\ga}=e^{-\la}\pd_x$. \\
$\rmn$		& The outward-pointing unit conormal to $\pd\Si$, defined locally by $\rmn=-e^{-\la}\pd_y$ when the local chart is as above. \\
$\nu$		& The unit normal to $\Si$ defined by $\nu:=-\star\dot{\ga}\wedge\rmn$, so that $\ed\Si\wedge\nu=\Vol^M$. \\
$N$			& The outward-pointing unit normal to $\pd M$. \\
$\hat{\nu}$	& The outward-pointing unit normal to $\pd\Si$ in $\pd M$ defined by the relation $\dot{\ga}\wedge\hat{\nu}\wedge N=\star 1$. \\
\end{tabular}
\end{center}
Note that at a boundary branch point, $\gadot$ and $\rmn$ vanish; nevertheless, the unit normal $\nu$ is well defined.
We denote by $A^{\pd M}=\inner{\nabla_\cdot\cdot,N}$ the second fundamental form of $\pd M$ in $M$.

\begin{figure}
\begin{center}
\begin{tikzpicture}[scale=1.8]
\fill [fill=black,opacity=0.2] (-1.5,0) rectangle (1.5,1.5);
\node [anchor=east] at (1.25,1.25) {$M$};
\draw [thick] (-1.5,0) -- (-0.9,0) node [below] {$\pd M$} -- (1.5,0);
\draw (-1.5,1.5) to [out=-90,in=150]
	node [sloped,above] {$\Si$} (0,0);
\fill (0,0) circle (0.05cm);
\draw[very thick,->](0,0) -- (0,-1)
	node [below] {$N$};
\draw[very thick,->](0,0) -- (canvas polar cs:angle=-30,radius=1cm)
	node [anchor=north west] {$\rmn$};
\draw[very thick,->](0,0) -- (1,0)
	node [anchor=south west] {$\hat{\nu}$};
\draw[very thick,->](0,0) -- (canvas polar cs:angle=60,radius=1cm)
	node [anchor=south west] {$\nu$};
\draw (60:0.2) -- ++(150:0.2) -- ++(240:0.2);
\draw (0,-0.2) -- (0.2,-0.2) -- (0.2,0);
\fill [fill=black,opacity=0.1] (0,0) -- +(0:0.7cm)
	arc [start angle=0,end angle=-30,radius=0.7cm] -- cycle;
\draw [draw=black,opacity=1] (0,0) +(0:0.7cm)
	arc [start angle=0,end angle=-30,radius=0.7cm];
\path (0,0) ++ (-15:0.5cm) node {$\alpha$};
\end{tikzpicture}
\end{center}
\caption{The conventions for the normals, conormals, \etc\ Note that the unit tangent $\dot{\ga}$ to $\pd\Si$ points \textit{away} from the reader (in accordance with the "left-hand rule"), and does not appear in the figure.}
\label{fig:normals}
\end{figure}
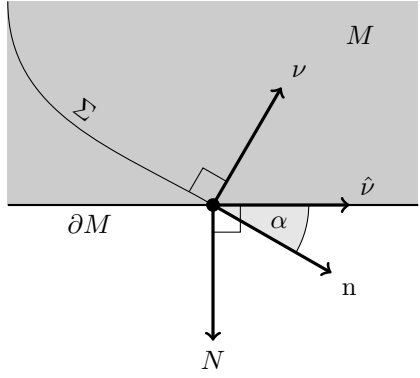

We have the usual area functional, being
\begin{equation}\label{eqn:area-defn}
\calA(u) :=  \geomint{\Si}{u^*\Vol^M.}
\end{equation}
for boundaried branched immersions $u:\Si\to M$.

The volume and wetting functionals are slightly different, in that they are defined in an \adhoc{} manner.
Indeed, if $\Phi(t):(-\eps,\eps)\times\Si\to M$ is a variation of $u=\Phi(0)$ with variational vector field $v:=\Phi'(0)$, we can define the (weighted) enclosed-volume functional as
\begin{equation}\label{eqn:volume-defn}
\calV^h(\Phi(t)) = \geomint{[0,t]\times\Si}{\Phi^*\rbrak{h\Vol^M}},
\end{equation}
where we take the putative prescribed mean curvature $h>0$ to be constant.\footnote{Much of this, as well as for the contact angle below, works for non-constant prescribed mean curvature and contact angle (say, $h\in\smooth{M}$ and $\theta\in\smooth{\pd M}$).
Indeed, in the appendix, we leave many of the formulae in general form, not assuming constancy.
We do not however dwell on that here in the main body of the paper since we deem this would be a distraction somewhat.}
We have the first variation formula
\begin{equation*}
\firstvar{\calV^h}{u}{v}
=
\geomint{\Si}{\inner{v,h\nu}\,\ed\Si},
\end{equation*}
from which we see that $\ed\calV^h$ is a well-defined functional on the space of boundaried branched immersions, in the sense that $\ed\calV^h$ depends only on $u$ and $v$, not on the whole variation $\Phi$.

Similarly, for a variation as above, we can define the (weighted) wetting functional to be
\begin{equation}\label{eqn:wetting-defn}
\calW^\theta(\Phi(t)) = \geomint{[0,t]\times\pd\Si}{\Phi^*\rbrak{\cos\theta\Area^{\pd M}}},
\end{equation}
where $\theta\in(0,\pi/2)$ is the putative prescribed contact angle.
Once again, the first variation formula
\begin{equation*}
\firstvar{\calW^\theta}{u}{v} = -\geomint{\pd\Si}{\cos\theta\inner{v,\hat{\nu}}\,\ed\tau}
\end{equation*}
(where $\tau$ is an arc-length parameter of $\pd\Si$) shows that $\ed\calW^\theta$ is a well-defined functional on the space of boundaried branched immersions.

We may now define the critical object we deal with in this paper, the \emph{capillary \cmc{} surface}.
Such surfaces are critical points of the area functional with respect to variations preserving enclosed-volume and boundary-area.
We write $\calV$ and $\calW$ for the unweighted functionals.

\begin{definition}\label{defn:capillary}
Let $u:\Si\to M$ be a boundaried branched conformal immersion.
An \emph{admissible variation} of $u$ is a smooth family of maps $\Phi:(-\eps,\eps)\times\Si\to M$ with $\Phi(0)=u$ such that firstly
enclosed-volume is preserved,
\begin{equation*}
\calV(\Phi(t)) = \calV(u) \textrm{ for all }t\in(-\eps,\eps),
\end{equation*}
and secondly boundary-area is preserved,
\begin{equation*}
\calW(\Phi(t)) = \calW(u) \textrm{ for all }t\in(-\eps,\eps).
\end{equation*}
The map $u$ is a \emph{capillary \cmc{} surface} if for all admissible variations $\Phi$,
$$ \derivat{}{t}{t=0}\calA(\Phi(t)) = 0. $$
\end{definition}

One also has the notion of the \textit{infinitesimal admissible variation} of $u$, which is any vector field $v\in\Ga(u^*TM)$ satisfying $v|_{\pd\Si}\in\Ga(T\pd M)$, $\firstvar{\calV}{u}{v}=0$, and $\firstvar{\calW}{u}{v}=0$.
An admissible variation $\Phi$ easily gives rise to a corresponding infinitesimal admissible variation $v:=\Phi'(0)$, and it is a lemma of \cite{barbosa-do-carmo-eschenburg-1988} that given an infinitesimal admissible variation $v$, there exists an admissible variation $\Phi$ with $\Phi'(0)=v$.
Note also that, if $u$ is conformal (which we can always assume), then $\ed\calA(u)\equiv\ed\calE(u)$.
We therefore have the following equivalent definitions.

\begin{prop}
Let $u:\Si\to M$ be a boundaried branched conformal immersion, and denote
$$
\Ga_{\pd M}(u^*TM) := \cbrak{v\in\Ga(u^*TM)\stbar\text{ if }p\in\pd\Si\text{ then }v(p)\in T_{u(p)}\pd M}.
$$
Then the following are equivalent:
\begin{proclaimenumerate}
\item $u$ is a capillary \cmc{} surface;
\item $\firstvar{\calA}{u}{v}=0$ for all $v\in\Ga_{\pd M}(u^*TM)$ such that $\firstvar{\calV}{u}{v}=\firstvar{\calW}{u}{v}=0$;
\item there exist $h>0$ and $\theta\in(0,\pi/2)$ such that $\firstvar{\rbrak{\calA+\calV^h+\calW^\theta}}{u}{v}=0$ for all $v\in\Ga_{\pd M}(u^*TM)$;
\item $\firstvar{\calE}{u}{v}=0$ for all $v\in\Ga_{\pd M}(u^*TM)$ such that $\firstvar{\calV}{u}{v}=\firstvar{\calW}{u}{v}=0$;
\item there exist $h>0$ and $\theta\in(0,\pi/2)$ such that $\firstvar{\rbrak{\calE+\calV^h+\calW^\theta}}{u}{v}=0$ for all $v\in\Ga_{\pd M}(u^*TM)$.
\end{proclaimenumerate}
\end{prop}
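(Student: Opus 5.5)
The plan is to establish (i)$\Leftrightarrow$(ii)$\Leftrightarrow$(iii), and then obtain (iv) and (v) by replacing $\calA$ with $\calE$.

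For (i)$\Leftrightarrow$(ii), the two directions are as follows.
\begin{itemize}
\item Given an infinitesimal admissible $v\in\Ga_{\pd M}(u^*TM)$, the lemma of \cite{barbosa-do-carmo-eschenburg-1988} supplies an admissible variation $\Phi$ with $\Phi'(0)=v$. Then $\firstvar{\calA}{u}{v}=\derivat{}{t}{t=0}\calA(\Phi(t))$, which vanishes by (i). Hence (i)$\Rightarrow$(ii).
\item Conversely, if $\Phi$ is admissible, then differentiating $\calV(\Phi(t))=\calV(u)$ and $\calW(\Phi(t))=\calW(u)$ at $t=0$ shows that $v=\Phi'(0)$ is infinitesimally admissible. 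Also $v\in\Ga_{\pd M}(u^*TM)$, because $\Phi(t)$ is boundaried for every $t$. So (ii) gives (i).
\end{itemize}

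For (ii)$\Leftrightarrow$(iii), I will use a Lagrange multiplier argument on the space $\Ga_{\pd M}(u^*TM)$. The three linear functionals are $\ed\calA(u)$, $\ed\calV(u)$ and $\ed\calW(u)$. By the first variation formulae recorded above, $\ed\calV(u)[v]=\int_\Si\inner{v,\nu}$ and $\ed\calW(u)[v]=-\int_{\pd\Si}\inner{v,\hat\nu}$.

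\begin{enumerate}
\item Statement (ii) says that $\ker\ed\calV(u)\cap\ker\ed\calW(u)\subset\ker\ed\calA(u)$. By elementary linear algebra, this holds if and only if $\ed\calA(u)=-h\,\ed\calV(u)-c\,\ed\calW(u)$ for some reals $h,c$. This needs no topology, since a functional vanishing on the common kernel of finitely many functionals is a combination of them.
\item Since $\ed\calV^h=h\,\ed\calV$ and $\ed\calW^\theta=\cos\theta\,\ed\calW$, this gives (iii) once we write $c=\cos\theta$.
\item To compute the multipliers, I will use the standard first variation of area, $\ed\calA(u)[v]=-\int_\Si\inner{v,\vec H}+\int_{\pd\Si}\inner{v,\rmn}$. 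Testing against normal $v$ compactly supported in the interior forces $\vec H=h\nu$, so $h$ is the (constant) mean curvature.
\item Testing against $v$ supported near the boundary and tangent to $\pd M$ gives $\inner{v,\rmn}=c\inner{v,\hat\nu}$. Thus $\rmn^{\pd M}=c\,\hat\nu$, i.e.\ $\cos\alpha=c$ where $\alpha$ is the angle in \cref{fig:normals}.
\end{enumerate}

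The main obstacle is the sign and range conditions $h>0$ and $\theta\in(0,\pi/2)$. The multiplier argument alone only gives real $h$ and $c$, with $|c|\le1$ because $\rmn$ and $\hat\nu$ are unit vectors. I would handle this through the orientation conventions. Choosing the orientation of $\nu$ (equivalently, of $\Si$) makes $h\ge0$. The condition $c=\cos\theta\in(0,1)$ is then the convention built into the term ``capillary'' in the definition. I would state explicitly that, with the sign conventions fixed, these ranges are part of what is meant. Otherwise the equivalence is proved with $h\in\R$ and $\theta\in[0,\pi]$, and the restricted ranges follow under the paper's standing normalisation. In a remark I would also note the degenerate case where $\ed\calV(u)$ and $\ed\calW(u)$ are linearly dependent, in which the multipliers are not unique.

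Finally, for (iv) and (v), I will invoke the fact noted just before the proposition: $u$ is conformal, so $\ed\calA(u)\equiv\ed\calE(u)$ on all $v\in\Ga_{\pd M}(u^*TM)$, including boundary terms. This holds because $\ed\calE(u)[v]=-\int\inner{v,\tau(u)}+\int_{\pd\Si}\inner{v,\pd_\rmn u}$, where the tension field is $\tau(u)=e^{2\la}\vec H$ in conformal coordinates and $\pd_\rmn u$ matches $\rmn$ with the same weight. Substituting $\calE$ for $\calA$ in (ii) and (iii) then gives (iv) and (v) verbatim.
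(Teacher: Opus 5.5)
Your proposal is correct and matches the paper's own justification, which is just the paragraph preceding the proposition: the Barbosa--do~Carmo--Eschenburg lemma for (i)$\Leftrightarrow$(ii), an implicit Lagrange-multiplier step for (ii)$\Leftrightarrow$(iii), and $\ed\calA(u)\equiv\ed\calE(u)$ at conformal $u$ for (iv) and (v). Two corrections concern your side remarks on the ranges and the degenerate case. First, reversing the orientation of $\Si$ flips the signs of $\calV$ and $\calW$ (and of $\nu$, $\hat\nu$) together, so it sends $(h,\cos\theta)\mapsto(-h,-\cos\theta)$; it cannot put both $h>0$ and $\theta\in(0,\pi/2)$ in place, and it cannot remove $h=0$. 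Second, $\ed\calV(u)$ and $\ed\calW(u)$ are always linearly independent (test against interior normal fields, then against $\hat\nu$ on $\pd\Si$), so the multipliers are unique and your degenerate case never occurs. Consequently a free-boundary minimal disc satisfies (i) but not (iii) as literally stated. Your fallback of proving the equivalence with $h\in\R$, $\theta\in[0,\pi]$, and treating the stated ranges as an extra standing hypothesis, is therefore the right reading.
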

\textbf{Henceforth (and without loss of generality) we shall assume capillary \cmc{} surfaces to be conformally parametrised.}
Naturally, it is a consequence of this proposition that a capillary \cmc{} surface has constant mean curvature $h$ and constant contact angle $\theta$ (see for instance Lemma \ref{lem:vars-combined-general}); we refer to such surfaces as \emph{h-\cmc{} $\theta$-capillary surfaces} (or something similar).
We shall be making great use of the modified area and energy functionals, for which reason we set the following notation here;
\begin{align*}
\calE^{h,\theta} &:= \calE + \calV^h + \calW^\theta \\
\calA^{h,\theta} &:= \calA + \calV^h + \calW^\theta.
\end{align*}

We are interested in the stability of \cmc{} surfaces.
For us, this means that we will estimate the number of negative eigenvalues of some quadratic forms.

\begin{definition}
\label{def:index}
Let $u:\Si\to M$ be a $\theta$-capillary $h$-\cmc{} surface.
The \emph{index} of $u$ is the number of negative eigenvalues of the quadratic form $Q$ given by, for $f\in\smooth{\Si,\R}$,
\begin{multline}
Q(f,f) =
\geomint{\Si}{-f\Delta f - \Ric^M(\nu,\nu)f^2 - \abs{A}^2f^2\,\ed\Si}
\\
+ \geomint{\pd\Si}{f\pderiv{f}{\rmn}
	+ \rbrak{
		\cot\theta\inner{A(\rmn,\rmn),\nu}
		+ \frac{1}{\sin\theta}\inner{A^{\pd M}(\hat{\nu},\hat{\nu}),N}
	}f^2\,\ed\tau.
}
\end{multline}
The \emph{energy index} of $u$ is the number of negative eigenvalues of the quadratic form $Q_{\calE}$ given by, for $v\in\Ga_{\pd M}(u^*TM)$,
\begin{multline}
Q_{\calE}(v,v) =
\geomint{\Si}{-\inner{v,\Delta v}
	- \Rm^M(v,E_i,E_i,v)
	+ h\Vol^M(v,\nabla_{u_x}v,u_y)
	+ h\Vol^M(v,u_x,\nabla_{u_y}v)
	\,\ed\Si}
\\
+ \geomint{\pd\Si}{
	\inner{v,\nabla_\rmn v}
	- \sin\theta\inner{A^{\pd M}(\dot{\ga},\dot{\ga}),N}\abs{v}^2
	\,\ed\tau
}.
\end{multline}
\end{definition}
These quadratic forms arise out of the Hessians of the functionals $\calA^{h,\theta}$ and $\calE^{h,\theta}$ derived in the appendix --- specifically, Lemmas \ref{lem:hess-mod-area-cmc-cap} and \ref{lem:hess-mod-energy}.
In this article, we estimate the number of negative eigenvalues of $Q$ over all variations, not just those which are admissible.
Nevertheless, it is readily seen that the error is at most one.

\section{Comparison of area and energy}
\label{sec:comparison}

It is an elementary and classical application of the Cauchy--Schwarz inequality that $\calA(u)\leq\calE(u)$ with equality if and only if $u$ is conformal.
Furthermore, if $u$ is conformal, then $\ed\calA(u)\equiv\ed\calE(u)$.
At a conformal map, the relationship between the second variations is given by the following.

\begin{theorem}
\label{thm:area-energy-comparison}
Let $M$ be a Riemannian three-manifold (possibly with boundary), let $\Si$ be a Riemann surface (possibly with boundary), and suppose that $u:\Si\to M$ is a boundaried branched conformal immersion.
Then, for any variation $\Phi:I\times\Si\to M$ of $u$ with $\Phi(I,\pd\Si)\subset\pd M$, we have (denoting $v:=\Phi'(0)\in\Ga_{\pd M}(u^*TM)$)
\begin{equation}
\label{eqn:area-energy-comparison}
\derivat{^2}{t^2}{t=0}{\rbrak{\calE - \calA}\rbrak{\Phi(t)}} = 4\int_\Si\!\abs{\mu}^2\,\ed\Si,
\end{equation}
where, splitting $v=s+\si$ into $s:=v^\perp$ and $\si:=v^\tgt$,
$$ \mu = \rbrak{
	\nabla^\tgt_\pdz \si^{\rbrak{0,1}}
	- 2e^{-2\la}\inner{s,A(u_z,u_z)}u_\zb
}\tensor\dz $$
is the \emph{infinitesimal conformal deficit} of $v$.
\end{theorem}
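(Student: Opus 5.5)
The plan is to work pointwise with the algebraic identity relating energy and area densities, and avoid computing the two second variations separately. In a local conformal parameter $z$ for $\Si$, write $u_t:=\Phi(t)$. Set
$$\phi(t):=\inner{\pd_z u_t,\pd_z u_t}$$
for the Hopf-type ($(2,0)$-part of the pulled-back metric) quantity. One then has the pointwise identity
$$ \tfrac12\rbrak{\abs{\pd_x u_t}^2+\abs{\pd_y u_t}^2} - \abs{\pd_x u_t\wedge \pd_y u_t} = \frac{4\abs{\phi(t)}^2}{\tfrac12\rbrak{\abs{\pd_xu_t}^2+\abs{\pd_yu_t}^2}+\abs{\pd_xu_t\wedge\pd_yu_t}} , $$
which is just $a-\sqrt{a^2-4\abs{\phi}^2}=4\abs{\phi}^2/(a+\sqrt{\cdot})$ with $a$ the energy density, since $\abs{u_x\wedge u_y}^2 = a^2-4\abs{\phi}^2$.

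Since $u$ is conformal, $\phi(0)=0$. Differentiating twice at $t=0$, only the term $8\abs{\phi'(0)}^2/(2e^{2\la})$ survives, because any other term carries a factor of $\phi(0)$. Integrating against $\ed x\,\ed y$ gives
$$\derivat{^2}{t^2}{t=0}\rbrak{\calE-\calA}(\Phi(t)) = \int_\Si 4e^{-2\la}\abs{\phi'(0)}^2\,\ed x\,\ed y .$$
No boundary terms appear, because the identity is pointwise and the functionals are integrals of densities over the fixed domain $\Si$. The boundary condition $\Phi(I,\pd\Si)\subset\pd M$ only guarantees that $v\in\Ga_{\pd M}(u^*TM)$. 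This also explains the bilinearity remark: $\Phi''(0)$ never enters. At branch points $e^{2\la}$ vanishes, so the identity should be applied away from the discrete branch set. One then checks that the integrand $e^{-2\la}\abs{\phi'(0)}^2$ extends integrably; this follows once it is identified with $\abs{\mu}^2$ in the next step, since $\mu$ is smooth on the twisted bundle.

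Next, compute $\phi'(0)=2\inner{\nabla_\pdz v,u_z}$, and split $v=s+\si$. For the normal part, metric compatibility and $\inner{s,u_z}=0$ give $\inner{\nabla_\pdz s,u_z}=-\inner{s,A(u_z,u_z)}$. For the tangential part, $\inner{\nabla_\pdz\si,u_z}=\inner{\nabla^\tgt_\pdz\si,u_z}$. Writing $\si=\si^{(1,0)}+\si^{(0,1)}$ with $\si^{(1,0)}\parallel u_z$ and $\si^{(0,1)}\parallel u_\zb$, and using $\inner{u_z,u_z}=0$ together with the fact that $\nabla^\tgt_\pdz$ preserves type (the induced connection on $\xi$ is the Chern connection of the conformal metric), only $\inner{\nabla^\tgt_\pdz\si^{(0,1)},u_z}$ survives. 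Since $\nabla^\tgt_\pdz\si^{(0,1)}$ is a multiple of $u_\zb$ and $\inner{u_\zb,u_z}=e^{2\la}/2$, this yields
$$\phi'(0) = 2\cdot\tfrac{e^{2\la}}{2}\,\Big(\text{coefficient of }u_\zb\text{ in }\nabla^\tgt_\pdz\si^{(0,1)} - 2e^{-2\la}\inner{s,A(u_z,u_z)}u_\zb\Big).$$
That is, $\phi'(0)$ is, up to normalisation, the $u_\zb$-component of $\mu$.

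Finally, match norms. Take $\abs{u_\zb}^2=e^{2\la}/2$ and $\abs{\dz}^2=2e^{-2\la}$ with $\ed\Si=e^{2\la}\ed x\,\ed y$. Then $4e^{-2\la}\abs{\phi'(0)}^2\,\ed x\,\ed y$ should equal $4\abs{\mu}^2\ed\Si$. I expect the main obstacle to be this bookkeeping of factors of $2$ and of the Hermitian norm conventions on $\xi^{0,1}\tensor K$, together with justifying type preservation of $\nabla^\tgt$ across branch points via the twisted bundle $\xi$. The constant in the statement fixes the normalisation, and I will choose conventions to match it.
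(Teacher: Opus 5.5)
Your proposal is correct, and it proves the identity by a genuinely different and much shorter route than the paper's.

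The paper works with the integrated second variations. It computes the second derivatives of $\calE$ and $\calA$ separately from the standard second-variation integrands and splits $v=s+\si$. It then uses Codazzi--Mainardi and a complexified Bochner-type identity for $\Ric^\Si(\si,\si)$ to write the difference as $8e^{-2\la}\abs{\eta}^2$ plus divergence terms. These are integrated to $\pd\Si$, and in the frame $\si=f\pd_\tau-g\rmn$ it checks that the mixed boundary terms cancel and the rest is an exact $\tau$-derivative.

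You instead use the pointwise identity $e-a=4\abs{\phi}^2/(e+a)$ for the $(2,0)$-part $\phi$ of $\Phi(t)^*g$. At a conformal map only $4e^{-2\la}\abs{\phi'(0)}^2$ survives in the second derivative of the density, and you then read off $\phi'(0)=2\inner{\nabla_\pdz v,u_z}$ as a component of $\eta$.

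Both obstacles you anticipate dissolve.
\begin{itemize}
\item The normalisation needs no choices. With your conventions $\eta=e^{-2\la}\phi'(0)\,u_\zb$, hence $\abs{\mu}^2=\abs{\eta}^2\abs{\dz}^2=e^{-4\la}\abs{\phi'(0)}^2$ and $4\abs{\mu}^2\,\ed\Si=4e^{-2\la}\abs{\phi'(0)}^2\,\ed x\,\ed y$ exactly.
\item Type preservation across branch points is automatic. $\nabla^\tgt$ is a metric connection on the oriented rank-two bundle $\xi$, so it commutes with the rotation $J$ of $\xi$.
\item Integrability near branch points follows directly from Cauchy--Schwarz: $e^{-2\la}\abs{\phi'(0)}^2\leq 2\abs{\nabla_\pdz v}^2$.
\end{itemize}

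What your route buys is that the two sides agree as densities. No curvature identities, no integration by parts and none of the boundary analysis are needed. The hypothesis $\Phi(I,\pd\Si)\subset\pd M$ and the acceleration $\Phi''(0)$ visibly play no role, which explains the paper's bilinearity remark.

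Both arguments tacitly differentiate $\calA(\Phi(t))$ under the integral sign near branch points, where $\Phi(t)$ need not be an immersion. Your formulation handles this cleanly: the identity together with $\pd_z\Phi(t)=u_z+O(t)$ gives $0\leq e_t-a_t\leq Ct^2$ uniformly. So small discs around branch points contribute negligibly at second order.

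The paper's route, in exchange, produces explicit intermediate expressions in $A$, $s$ and $\si$ that sit alongside the Hessian formulae of its appendix.
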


Let us now demystify the quantity $\mu$.
Recall that a variation $\Phi(t)$ of $u$ is conformal if and only if $\inner{\pdz\Phi(t),\pdz\Phi(t)}=0$, where the inner product $\inner{\cdot,\cdot}$ is the pull-back metric $u^*g$ on $u^*TM\to\Si$ extended complex-bilinearly (not sesquilinearly).
Let $\Phi(t)$ be a conformal variation of $u$; since we are only interested in the infinitesimal behaviour, we compute
$\pd_{t=0}\inner{\pdz\Phi(t),\pdz\Phi(t)} = 0$,
which is the case if and only if
$$ \inner{\nabla_\pdz v,\pdz u} = 0, $$
where $\nabla$ is the Levi-Civita connection on $M$.
The quantity $\mu$ is concocted so that the above equation is equivalent to the equation $\mu=0$, which according to our theorem, is the case if and only if the second variations of area and energy are equal.
To see a computation of the quantity $\mu$, see \cite[pp.~9--10]{seemungal-sharp-2026} and the surrounding discussion.
Since we do our computations locally, we will use the local quantity $\eta$, defined as $\mu=\eta\tensor\dz$.

\begin{proof}
Let $p\in\Si$ and do computations at $p$ with an orthonormal frame $E_i$ at $T_p\Si$ such that $\nabla^\tgt_{E_i}E_j = 0$ at $p$.
Recall that (suppressing the notation for summation over $E_i$),
\begin{align*}
\derivat{^2}{t^2}{t=0}\calE\rbrak{\Phi(t)}
=& \int_\Si\!
	\abs{\nabla v}^2
	- \Rm^M\rbrak{v,E_i,E_i,v}
	+ \dvg_\Si\rbrak{\nabla_vv}
	\,\ed\Si,\\
\derivat{^2}{t^2}{t=0}\calA\rbrak{\Phi(t)}
=& \int_\Si\!
	\abs{\rbrak{\nabla v}^\perp}^2
	- \Rm^M\rbrak{v,E_i,E_i,v}
	+ \rbrak{\dvg_\Si v}^2 \\
	&\qquad
	- \inner{\nabla_{E_i} v,E_j}\inner{E_i,\nabla_{E_j}v}
	+ \dvg_\Si\rbrak{\nabla_vv}
	\,\ed\Si.
\end{align*}
Then the second-order conformal deficit of $u$ is
\begin{align*}
\derivat{^2}{t^2}{t=0}{\rbrak{\calE - \calA}}\rbrak{\Phi(t)}
	&= \int_\Si\!\abs{\rbrak{\nabla v}^\tgt}^2
	- \rbrak{\dvg_\Si v}^2
	+ \inner{\nabla_{E_i}v,E_j}\inner{E_i,\nabla_{E_j}v}\,\ed\Si \\
	&=: \int_\Si\!\calC(u)\sbrak{v,v}\,\ed\Si.
\end{align*}
Splitting in the now-familiar way $v=s+\si$, where $s\in\Ga(\nu\Si)$ and $\si\in\xi$, and moreover using the following easily-checked formulae:
\begin{align*}
-\rbrak{\dvg_\Si v}^2 &=
	- \rbrak{\dvg_\Si\si}^2
	- \inner{s,H}^2
	+ 2\inner{s,H}\dvg_\Si\si
\\
\abs{\rbrak{\nabla v}^\tgt}^2 &=
	\inner{s,A(E_i,E_j)}^2
	+ \abs{\nabla^\tgt\si}^2
	- 2\inner{s,A(E_i,\nabla^\tgt_{E_i}\si)}
\\
\inner{\nabla_{E_i}v,E_j}\inner{E_i,\nabla_{E_j}v} &=
	\inner{s,A(E_i,E_j)}^2
	- 2\inner{s,A(E_i,\nabla^\tgt_{E_i}\si)}
	+ \inner{\nabla_{E_i}\si,E_j}\inner{E_i,\nabla_{E_j}\si},
\end{align*}
we have
\begin{align}
\calC(u)[v,v]
&= 2\inner{s,A(E_i,E_j)}^2
	- \inner{s,H}^2
	\nonumber
\\&\qquad
+ \abs{\nabla^\tgt\si}^2
	- \rbrak{\dvg_\Si\si}^2
	+ \inner{\nabla_{E_i}\si,E_j}\inner{E_i,\nabla_{E_j}\si}
	\tag{$\calC_2$}\label{eqn:C2}
\\&\qquad
- 4\inner{s,A(E_i,\nabla^\tgt_{E_i}\si)}
	+ 2\inner{s,H}\dvg_\Si\si.
	\tag{$\calC_3$}\label{eqn:C3}
\end{align}

\paragraph{Term \eqref{eqn:C2}.}
A short computation reveals that
$$
\dvg_\Si\rbrak{\nabla^\tgt_\si\si}
= \dvg_\Si\rbrak{\dvg_\Si\si\cdot\si}
- \rbrak{\dvg_\Si\si}^2
+ \Ric^\Si(\si,\si)
+ \inner{\nabla^\tgt_{E_j}\si,E_i}\inner{\nabla^\tgt_{E_i}\si,E_j},
$$
giving
$$
\eqref{eqn:C2}
= \abs{\nabla^\tgt\si}^2
- \Ric^\Si(\si,\si)
+ \dvg_\Si\rbrak{\nabla_\si^\tgt\si - \dvg_\Si\si\cdot\si}.
$$

\paragraph{Term \eqref{eqn:C3}.}
Another short computation, using the Codazzi--Mainardi equation, reveals that
\begin{equation*}
\Rm^M\rbrak{\si,E_i,E_i,s}
= \nabla_\si\inner{H,s}
- \inner{H,\nabla^\perp_\si,s}
- \nabla_{E_i}\inner{A(\si,E_i),s}
+ \inner{A(\si,E_i),\nabla^\perp_{E_i}s}
+ \inner{A(\nabla^\tgt_{E_i}\si,E_i),s},
\end{equation*}
giving
\begin{multline*}
\eqref{eqn:C3}
= - 2\inner{s,A(E_i,\nabla^\tgt_{E_i}\si)}
	+ 2\nabla_\si\inner{H,s}
	- 2\Rm^M\rbrak{\si,E_i,E_i,s}
	- 2\inner{H,\nabla^\perp_\si,s}
\\
	- 2\nabla_{E_i}\inner{A(\si,E_i),s}
	+ 2\inner{A(\si,E_i),\nabla^\perp_{E_i}s}
	+ 2\inner{s,H}\dvg_\Si\si.
\end{multline*}
Now, define (temporarily) the one-form $\om(\xi) := \inner{\si,\xi}\inner{H,s} - \inner{A(\si,\xi),s}$; then
$$
\dvg_\Si\om
= \dvg_\Si\si\cdot\inner{H,s}
+ \nabla_\si\inner{H,s}
- \nabla_{E_i}\inner{A(\si,E_i),s},
$$
and so
\begin{multline*}
\eqref{eqn:C3}
= - 2\inner{s,(A(E_i,\nabla^\tgt_{E_i}\si)}
- 2\Rm^M\rbrak{\si,E_i,E_i,s}
- 2\inner{H,\nabla^\perp_\si,s}
+ 2\inner{A(\si,E_i),\nabla^\perp_{E_i},s}
\\
+ \dvg_\Si\rbrak{2\sigma\inner{H,s}
	+ 2\rbrak{\nabla_\si s}^\tgt}.
\end{multline*}
Collecting these computations together, we have
\begin{align}
\calC(u)[v,v]
&= 2\inner{s,A(E_i,E_j)}^2
	- \inner{s,H}^2
		\tag{$\calC_1$}\label{eqn:C1}
\\&\qquad
	+ \abs{\nabla^\tgt\si}^2
	- \Ric^\Si(\si,\si)
		\tag{$\calC_2a$}\label{eqn:C2a}
\\&\qquad
	- 2\inner{s,A(E_i,\nabla^\tgt_{E_i}\si)}
	- 2\Rm^M(\si,E_i,E_i,s)
	- 2\inner{H,\nabla^\perp_\si s}
	+ 2\inner{A(\si,E_i),\nabla^\perp_{E_i}s}
		\tag{$\calC_3a$}\label{eqn:C3a}
\\&\qquad
	+ \dvg_\Si\rbrak{
		\nabla^\tgt_\si\si
		- \dvg_\Si\si\cdot\si
		+ 2\inner{H,s}\si
		+ 2\rbrak{\nabla_\si s}^\tgt
		}.
		\tag{$\calC_\pd$}\label{eqn:Cb}
\end{align}

At this point, we complexify.
Let $z=x+iy$ be local holomorphic co-ordinates about $p\in\Si$; then $u^*g=e^{2\la}\dz\dzb$ for some conformal factor $\la$.
We have, as usual $\pd_z = \frac{1}{2}\rbrak{\pd_x-i\pd_y}$, \etc, and so $\abs{\pd_z}^2 = \frac{1}{2}e^{2\la}$, \etc

\paragraph{Term \eqref{eqn:C1}.}
Now, a couple of straightforward computations show that
$$ \eqref{eqn:C1} = 16e^{-4\la}\abs{\inner{s,A(\pd_z,\pd_z)}}^2. $$

\paragraph{Term \eqref{eqn:C2a}.}
For term \eqref{eqn:C2a}, note first that $4e^{-2\la}\abs{\nabla^\tgt_{\pd_z}\si}^2 = \abs{\nabla^\tgt\si}^2$, so
$$ \abs{\nabla^\tgt\si}^2 = 4e^{-2\la}\rbrak{\abs{\nabla^\tgt_\pdz\si^{(1,0)}}^2 + \abs{\nabla^\tgt_{\pd_z}\si^{(0,1)}}^2}. $$
Also, using the Jacobi identity and expanding the curvature tensor, we have that
\begin{align*}
\frac{1}{4}e^{2\la}\Ric^\Si(\si,\si)
&= \Rm^\Si(\si,\pdz,\pdzb,\si) \\
&= \Rm^\Si\rbrak{\si^{(0,1)},\pdz,\pdzb,\si^{(1,0)}} \\
&= {\inner{\nabla^\tgt_\pdzb\nabla^\tgt_\pdz\si^{(0,1)},\si^{(1,0)}}
	- \inner{\nabla^\tgt_\pdz\nabla^\tgt_\pdzb\si^{(0,1)},\si^{(1,0)}}} \\
&= \abs{\nabla^\tgt_\pdz\si^{(1,0)}}^2
	- \abs{\nabla^\tgt_\pdz\si^{(0,1)}}^2
	+ \pdzb\inner{\nabla^\tgt_\pdz\si^{(0,1)},\si^{(1,0)}}
	- \pdz\inner{\nabla^\tgt_\pdzb\si^{(0,1)},\si^{(1,0)}}.
\end{align*}
Therefore,
\begin{equation*}
\eqref{eqn:C2a}
= 8e^{-2\la}\abs{\nabla^\tgt_\pdz\si^{(0,1)}}^2
- 4e^{-2\la}\pdzb\inner{\nabla^\tgt_\pdz\si^{(0,1)},\si^{(1,0)}}
+ 4e^{-2\la}\pdz\inner{\nabla^\tgt_\pdzb\si^{(0,1)},\si^{(1,0)}}.
\end{equation*}
Now, it is easy to see that
\begin{equation*}
\abs{\eta}^2
= \abs{\nabla^\tgt_\pdz\si^{(0,1)}}^2
+ 2e^{-2\la}\abs{\inner{s,A(\pdz,\pdz)}}^2
- \inner{s,A(\nabla^\tgt_\pdzb\si^{(1,0)},\pdz)}
- \inner{s,A(\nabla^\tgt_\pdz\si^{(0,1)},\pdzb)},
\end{equation*}
giving
\begin{align*}
\eqref{eqn:C2a}
&= 8e^{-2\la}\abs{\eta}^2
- 16e^{-4\la}\abs{\inner{s,A(\pdz,\pdz)}}^2
\\&\qquad
+ 8e^{-2\la}\inner{s,A(\nabla^\tgt_\pdzb\si^{(1,0)},\pdz)}
+ 8e^{-2\la}\inner{s,A(\nabla^\tgt_\pdz\si^{(0,1)},\pdzb)}
\\&\qquad
- 4e^{-2\la}\pdzb\inner{\nabla^\tgt_\pdz\si^{(0,1)},\si^{(1,0)}}
+ 4e^{-2\la}\pdz\inner{\nabla^\tgt_\pdzb\si^{(0,1)},\si^{(1,0)}}.
\end{align*}

\paragraph{Term \eqref{eqn:C3a}.}
Note firstly that
\begin{equation*}
\inner{A(\si,E_i),\nabla^\perp_{E_i}s}
= \frac{1}{2}\inner{H,\nabla^\perp_\si s}
+ 2e^{-2\la}\rbrak{\inner{A(\si^{(1,0)},\pdz),\nabla^\perp_\pdzb s}
	+ \inner{A(\si^{(0,1)},\pdzb),\nabla^\perp_\pdz s}}.
\end{equation*}
Also, a long but straightforward computation (using Jacobi's identity and splitting terms into their tangential and normal parts) shows that
\begin{align*}
\Rm^M(\si,E_i,E_i,s)
&=
- \frac{1}{2}\inner{H,\nabla^\perp_\si s}
\\&\quad
+ 2e^{-2\la}\inner{A\rbrak{\pdzb,\nabla^\tgt_\pdz\si^{(0,1)}},s}
- 2e^{-2\la}\inner{A\rbrak{\pdzb,\nabla^\tgt_\pdz\si^{(1,0)}},s}
\\&\quad
+ 2e^{-2\la}\inner{A\rbrak{\pdz,\nabla^\tgt_\pdzb\si^{(1,0)}},s}
- 2e^{-2\la}\inner{A\rbrak{\pdz,\nabla^\tgt_\pdzb\si^{(0,1)}},s}
\\&\quad
+ 2e^{-2\la}\inner{A\rbrak{\pdzb,\si^{(0,1)}},\nabla^\perp_\pdz s}
+ 2e^{-2\la}\inner{A\rbrak{\pdz,\si^{(1,0)}},\nabla^\perp_\pdzb s}
\\&\quad
+ 2e^{-2\la}\pdzb\rbrak{
	\inner{A\rbrak{\pdz,\si^{(0,1)}},s}
	- \inner{A\rbrak{\pdz,\si^{(1,0)}},s}}
\\&\quad
+ 2e^{-2\la}\pdz\rbrak{
	\inner{A\rbrak{\pdzb,\si^{(1,0)}},s}
	- \inner{A\rbrak{\pdzb,\si^{(0,1)}},s}}.
\end{align*}
We also have
$$
\inner{s,A(E_i,\nabla^\tgt_{E_i}\si)}
= 2e^{-2\la}\inner{s,A\rbrak{\pdz,\nabla^\tgt_\pdzb\si}}
+ 2e^{-2\la}\inner{s,A\rbrak{\pdzb,\nabla^\tgt_\pdz\si}},
$$
giving
\begin{align*}
\eqref{eqn:C3a}
&= -8e^{-2\la}\inner{A\rbrak{\pdzb,\nabla^\tgt_\pdz\si^{(0,1)}},s}
+ 8e^{-2\la}\inner{A\rbrak{\pdz,\nabla^\tgt_\pdzb\si^{(1,0)}},s}
\\&\qquad
- 4e^{-2\la}\pdzb\inner{A\rbrak{\pdz,\si^{(0,1)}-\si^{(1,0)}},s}
- 4e^{-2\la}\pdz\inner{A\rbrak{\pdzb,\si^{(1,0)}-\si^{(0,1)}},s}.
\end{align*}
Using therefore our computed expressions for \eqref{eqn:C1}, \eqref{eqn:C2a}, \eqref{eqn:C3a}, we have
\begin{align*}
\calC(u)[v,v]
&= 8e^{-2\la}\abs{\eta}^2
+ 4e^{-2\la}\pdz\rbrak{\inner{\nabla^\tgt_\pdzb\si^{(0,1)},\si^{(1,0)}}
	- \inner{A\rbrak{\pdzb,\si^{(1,0)}-\si^{(0,1)}},s}}
\\&\qquad
- 4e^{-2\la}\pdzb\rbrak{\inner{\nabla^\tgt_\pdz\si^{(0,1)},\si^{(1,0)}}
	- \inner{A\rbrak{\pdz,\si^{(1,0)}-\si^{(0,1)}},s}}
\\&\qquad
+ \dvg_\Si\rbrak{
		\nabla^\tgt_\si\si
		- \dvg_\Si\si\cdot\si
		+ 2\inner{H,s}\si
		+ 2\rbrak{\nabla_\si s}^\tgt
		} \\
&= 8e^{-2\la}\abs{\eta}^2
+ 4e^{-2\la}\pdz T(\pdzb)
- 4e^{-2\la}\pdzb T(\pdz)
\\&\qquad
+ \dvg_\Si\rbrak{
		\nabla^\tgt_\si\si
		- \dvg_\Si\si\cdot\si
		+ 2\inner{H,s}\si
		+ 2\rbrak{\nabla_\si s}^\tgt
		},
\end{align*}
where we have defined the covector
$$
T(\zeta)
= \inner{\nabla_\zeta^\tgt\si^{(0,1)},\si^{(1,0)}}
- \inner{A\rbrak{\zeta,\si^{(1,0)}-\si^{(0,1)}},s}.
$$
It is worth pausing for a moment to see where we are in the computation: the only term which is not integrated to a boundary integral is the first one.

Focus on the "$T$"-terms, and integrate:
\begin{align*}
\integ{\Si}{}{4e^{-2\la}\rbrak{\pdz T(\pdzb) - \pdzb T(\pdz)}}{\Si}
&= \geomint{\Si}{2i\ed\rbrak{T(\pdzb)\dzb + T(\pdz)\dz}} \\
&= \geomint{\pd\Si}{2i\rbrak{T(\pd_x)\ed x + T(\pd_y)\ed y},} \\
\intertext{but along $\pd\Si$, we have $\pd_\tau=e^{-\la}\pd_x$ and $\pd_y=-e^{\la}\rmn$, so}
&= \geomint{\pd\Si}{2iT(\pd_\tau)}\,\ed\tau.
\end{align*}
(At first glance, it looks like we potentially have a problem, in that the above integral looks like it is purely imaginary.
Note however, that since $T(\zeta)+\overline{T}(\zeta)=\nabla_\zeta\abs{\sigma^{(0,1)}}^2\in\smooth{\Si,\R}$, we have that $T\in\Ga(T^*\Si\tensor i\R)$ is a purely imaginary covector as long as $\zeta$ is a real tangent vector.
Therefore the above integral is in fact real.)

We arrive then (grouping the terms into "$\si$-only" and "mixed $\si,s$" terms) at
\begin{multline*}
\geomint{\Si}{\calC(u)[v,v]\,\ed\Si}
= \geomint{\Si}{8\abs{\eta}^2\,\ed\Si}
\\
+ \int_{\pd\Si}\!
\Big(
	2i\inner{\nabla_{\pd_\tau}\si^{(0,1)},\si^{(1,0)}}
	+ \inner{\nabla^\tgt_\si\si,\rmn}
	- \inner{\si,\rmn}\dvg_\Si\si
\\
	- 2i\inner{A\rbrak{\pd_\tau,\si^{(1,0)}-\si^{(0,1)}},s}
	+ 2\inner{H,s}\inner{\si,\rmn}
	- 2\inner{A(\si,\rmn),s}
\Big)\,\ed\tau.
\end{multline*}

Since $\sigma\in\Ga(u^*TM)$, there exists some $\phi\in\smooth{\Si,\C}$ such that
\begin{align*}
\si &= \sqrt{2}e^{-\la}\rbrak{\phi\pd_z + \overline{\phi}\pd_\zb} \\
&= \sqrt{2}e^{-\la}\rbrak{\Re\rbrak{\phi}\pd_x + \Im\rbrak{\phi}\pd_y} \\
&=: fe^{-\la}\pd_x + ge^{-\la}\pd_y  \\
&= f\pd_\tau - g\rmn,
\end{align*}
where $f,g$ are appropriately-defined real-valued functions.
Moreover, we have a multitude of formulae; for instance, $\Re\phi = \frac{1}{2}\rbrak{\phi+\overline{\phi}}$, $g=\sqrt{2}\Im\phi = \frac{i}{\sqrt{2}}\rbrak{\overline{\phi}-\phi}$, $\phi=\frac{1}{\sqrt{2}}\rbrak{f+ig}$, \etc{}, which we shall use liberally in the subsequent.

\noindent\textbf{The mixed "$\si,s$" terms.}
Using the above formulae, we have
\begin{align*}
\si^{(1,0)} &= \frac{1}{2}e^{-\la}\rbrak{f\pd_x + g\pd_y + ig\pd_x - if\pd_y} \\
\si^{(0,1)} &= \frac{1}{2}e^{-\la}\rbrak{f\pd_x + g\pd_y - ig\pd_x + if\pd_y},
\end{align*}
so that $\si^{(1,0)}-\si^{(0,1)}=ig\pd_\tau + if\rmn$.
Moreover, we have
\begin{align*}
- 2i\inner{A\rbrak{\si^{(1,0)}-\si^{(0,1)}},s}
	&=
	2g\inner{A(\pd_\tau,\pd_\tau),s}
	+ 2f\inner{A(\pd_\tau,\rmn),s}
\\
2\inner{H,s}\inner{\si,\rmn}
	&=
	- 2g\inner{A(\pd_\tau,\pd_\tau),s}
	- 2g\inner{A(\rmn,\rmn),s}
\\
- 2\inner{A(\si,\rmn),s}
	&=
	-2f\inner{A(\pd_\tau,\rmn),s}
	+ 2g\inner{A(\rmn,\rmn),s}.
\end{align*}
Using these computations, we see that the mixed terms in fact vanish:
$$
- 2i\inner{A\rbrak{\si^{(1,0)}-\si^{(0,1)}},s}
+ 2\inner{H,s}\inner{\si,\rmn}
- 2\inner{A(\si,\rmn),s} = 0.
$$

\noindent\textbf{The unmixed "$\si$-only" terms.}
Using the expression above of $\si^{(1,0)}$ and $\si^{(0,1)}$, one computes that
\begin{align*}
2i\inner{\nabla_{\pd_\tau}\si^{(0,1)},\si^{(1,0)}}
&=
\frac{1}{2}i \pd_\tau\rbrak{f^2 + g^2}
+ f\pd_\tau g
- g \pd_\tau f \\
&=
\frac{1}{2}i \pd_\tau\abs{\si}^2
+ f\pd_\tau g
- g \pd_\tau f. \\
\end{align*}
We also have
$\inner{\nabla^\tgt_\si\si,\rmn} = -f\pd_\tau g + g\pd_\rmn g$
and
$-\inner{\si,\rmn}\dvg_\Si\si = g\pd_\tau f - g\pd_\rmn g$.
We see therefore that the unmixed terms amount to
$$
2i\inner{\nabla_{\pd_\tau}\si^{(0,1)},\si^{(1,0)}}
+ \inner{\nabla^\tgt_\si\si,\rmn}
- \inner{\si,\rmn}\dvg_\Si\si
=
\frac{1}{2}i \pd_\tau\abs{\si}^2,
$$
which, after integrating along $\pd\Si$ vanishes.
There are no more terms: we have simply
$$
\geomint{\Si}{\calC(u)[v,v]\,\ed\Si}
=
8\geomint{\Si}{\abs{\eta}^2\,\ed\Si.}
$$
\end{proof}

This comparison of second variations gives the following comparison of indices, taking us half-way to the main result.

\begin{theorem}
\label{thm:index-area-energy-comparison}
Let $u:\Si\to M$ be capillary \cmc{} surface of genus $g$, interior branching order $b$, boundary branching order $d$, and $m$ boundary components.
Then
\begin{equation}
\label{eqn:index-area-energy-comparison}
i_\Si \leq \indx\rbrak{Q_\calE} + r
\end{equation}
where
$$
r = 
\begin{cases}
6g - 6 + 3m - 2b - d &\text{if }2b+d<4g-4+2m,\\
4g - 2 + 2m - 2b + 2\floor{-d/2} &\text{if } 4g-4+2m\leq2b+d\leq8g-8+4m,\\
0 &\text{if }8g-8+4m<2b+d.
\end{cases}
$$
\end{theorem}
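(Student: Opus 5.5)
The plan is to follow the Ejiri--Micallef strategy. Theorem \ref{thm:area-energy-comparison} gives an area--energy comparison; from it I would get the index comparison by a dimension count. The key point is to find, for each normal variation $s$, a tangential correction $\si$ that kills the conformal deficit $\mu$, up to a finite-dimensional obstruction whose dimension is $r$.

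\textbf{Step 1: translate to Hessians.} Let $W$ be a maximal subspace of $\smooth{\Si,\R}$ on which $Q$ is negative definite, so $\dim W=i_\Si$. For $f\in W$ set $s=f\nu$. For any $\si\in\Ga(\xi)$ with $\si|_{\pd\Si}$ tangent to $\pd M$, the field $v=s+\si$ lies in $\Ga_{\pd M}(u^*TM)$. I would check that $Q_\calE(v,v)-Q(f,f)=4\int_\Si\abs{\mu(v)}^2$. This should follow from Theorem \ref{thm:area-energy-comparison} applied to a variation $\Phi$ with $\Phi'(0)=v$. The wetting and volume terms are common to both functionals and cancel. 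The non-bilinear $\nabla_vv$ and tangential terms also cancel, since $\ed\calA^{h,\theta}(u)=\ed\calE^{h,\theta}(u)=0$ at the critical point. Finally, the Hessian of $\calA^{h,\theta}$ on $v$ depends only on $\inner{v,\nu}$ up to tangential reparametrisation, so it equals $Q(f,f)$. This uses Lemmas \ref{lem:hess-mod-area-cmc-cap} and \ref{lem:hess-mod-energy} of the appendix.

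\textbf{Step 2: solve the $\dbar$-problem.} The equation $\mu=0$ reads
\[
\dbar\si^{(0,1)}=2e^{-2\la}\inner{s,A(u_z,u_z)}u_\zb\,\dzb\tensor\dz,
\]
after lowering or identifying appropriately. This is a Cauchy--Riemann operator $\dbar:\Ga(\xi^{1,0})\to\Ga(\xi^{1,0}\tensor\Lambda^{0,1})$ on the twisted bundle $\xi^{1,0}=T^{1,0}\Si\tensor[D]$. It comes with a totally real boundary condition: $\si$ is tangent to $\pd M$ along $\pd\Si$, which forces $\si$ to be a multiple of $\dot\ga$ there. By Riemann--Roch for boundary problems on the doubled surface, this operator is Fredholm. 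Its cokernel is identified with holomorphic quadratic differentials $\zeta\tensor[-D]$ satisfying the reflected boundary condition, \ie sections of $K^2\tensor[-D]$ that are real on $\pd\Si$. The equation is solvable for given $s$ iff the right-hand side is $L^2$-orthogonal to this cokernel. So there is a linear map $\Psi:W\to\coker$, and on $\ker\Psi$ we can choose $\si(f)$ with $\mu=0$. For $f\ne0$ in $\ker\Psi$ we then get $Q_\calE(v,v)=Q(f,f)<0$. The map $f\mapsto v$ is injective, because $v^\perp=f\nu$. Hence $i_\Si\le\indx(Q_\calE)+\dim\coker$.

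\textbf{Step 3: compute the cokernel dimension, the main obstacle.} I would pass to the Schottky double $\hat\Si$, of genus $\hat g=2g+m-1$. On it, interior branch points double to $2b$ points and boundary branch points stay as $d$ points; the boundary branch points behave as real points, where the twisting multiplicity is halved on each side. The cokernel is the real part, of real dimension equal to the complex dimension, of $H^0(\hat\Si,K^2\tensor[-\hat D])$ with $\deg\hat D=2b+d$. I would compute this dimension by cases on $\deg(K^2\tensor[-\hat D])=4\hat g-4-(2b+d)$:
\begin{itemize}
\item Nonspecial range: Riemann--Roch gives $3\hat g-3-(2b+d)=6g-6+3m-2b-d$.
\item Range $0\le\deg\le2\hat g-2$: Clifford-type bounds give at most $\deg/2+1$. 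Accounting for the parity of $d$, since boundary points must be removed in symmetric pairs to preserve reality, this gives $4g-2+2m-2b+2\floor{-d/2}$.
\item Negative degree: the dimension is $0$.
\end{itemize}
The thresholds $2b+d=4g-4+2m$ and $8g-8+4m$ correspond to $\deg=2\hat g-2$ and $\deg=0$. The delicate points are the correct reality structure at boundary branch points and justifying Clifford's bound on the real sections. I would check these against the Ejiri--Micallef and Lima counts. Finally, regularity at branch points is needed so that $\si$ is a smooth section of $\xi$ (vanishing appropriately); this is exactly what the twisting by $[D]$ encodes.
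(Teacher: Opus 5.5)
\textbf{Capillary boundary condition.} Your overall architecture (negative space of $Q$, a tangential correction killing the deficit, Fredholm alternative, counting the obstruction on the double) is the paper's. However, Steps 1 and 2 impose the free-boundary ($\theta=\pi/2$) boundary condition, and that step fails for $\theta\in(0,\pi/2)$.

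Along $\pd\Si$ one has $\nu=\sin\theta\,\hat{\nu}-\cos\theta\,N$, so $\inner{f\nu,N}=-f\cos\theta$. This does not vanish for a general $f$ in the negative space of $Q$, since those functions need not vanish on $\pd\Si$. Hence your claim that $v=s+\si\in\Ga_{\pd M}(u^*TM)$ whenever $\si|_{\pd\Si}$ is a multiple of $\dot{\ga}$ is false. For such $v$, neither Theorem~\ref{thm:area-energy-comparison} (which requires $\Phi(I,\pd\Si)\subset\pd M$) nor Lemmas~\ref{lem:hess-mod-area-cmc-cap} and~\ref{lem:hess-mod-energy} apply, so the identity $Q_\calE(v,v)=Q(f,f)$ is unfounded.

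The correct requirement is $\inner{\si,\rmn}=\cot\theta\,f$ on $\pd\Si$, which is an inhomogeneous condition depending on $s$. The paper handles it as follows. Fix a tangential field $\zeta_s$, depending linearly on $s$, with exactly this boundary behaviour. Then $\si-\zeta_s$ satisfies the homogeneous totally real condition (values in $F$ on $\pd\Si$). It solves the same equation with a modified right-hand side $\psi(s)$, which differs from $\phi(s)$ by $D\zeta_s^{0,1}$. Because $\psi$ is still linear in $s$, solvability is again orthogonality to the same space $\ker D^*=H^0_F(\xi^{0,1}\tensor\kappa)$, and your counting goes through. This reduction is precisely what distinguishes the capillary case from the free-boundary one, and your proposal omits it.

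\textbf{Middle-range count.} Your middle-range value in Step 3 also does not follow from what you cite. On the double, $\deg(K^2\tensor[-\hat D])=8g+4m-8-2b-d$. Clifford therefore gives at most $\floor{\deg/2}+1=4g+2m-3-b+\floor{-d/2}$, not $4g-2+2m-2b+2\floor{-d/2}$. The remark about removing boundary points in symmetric pairs does not turn one into the other.

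In fact, the index formula always gives $\dim_\R\ker D^*\geq-\chi(\Si)-\mu(\xi^{1,0},F)=6g+3m-6-2b-d$. Take $g=0$, $m=5$, $b=3$, $d=0$, which is the left endpoint of the middle range. There this lower bound is $3$, whereas the claimed value is $2$. The paper's proof also passes from its Clifford estimate to this formula without the intermediate arithmetic. You should therefore recheck this value rather than reverse-engineer a justification for it.
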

\begin{proof}
Let $V$ be the negative eigenspace of $Q$, so that $\dim V=i_\Si$.
Let $\phi:V\to\xi^{0,1}\tensor(T^*\Si)^{1,0}$ be the linear map
$\phi(f):=2e^{-2\la}\inner{f\nu,A(u_z,u_z)}u_{\zb}\tensor\dz$, and define $s=f\nu$.
Recall then that the idea is to find a $\si$, depending of course on $s$, such that the variation giving $\si+s$ remains in the conformal class.

Indeed, the equation $\mu=0$ from Theorem \ref{thm:area-energy-comparison} is then written $D\si^{0,1} = \phi(s)$, where $D:=\dz\tensor\nabla^{\tgt}_{\pdz}$ is defined globally on $\Ga(\xi^{0,1})\to\Ga(\wedge^{1,0}\tensor\xi^{0,1})$.
Consider the boundary value problem
\begin{equation}
\label{eqn:zero-inf-conform-deficit}
\left\{
\begin{aligned}
D\si^{0,1} &= \phi(s) &\qquad\text{on }\Si \\
\Im(\inner{\si^{0,1},\pdz}) &= -\cot\theta\inner{s,\nu} &\qquad\text{on }\pd\Si,
\end{aligned}
\right.
\end{equation}
where the boundary condition is precisely the condition that $\si+s$ be a section not just of $u^*TM$, but also of $u|_{\pd\Si}T\pd M$ on the boundary.\footnote{Indeed, $\si+s\in\Ga_{\pd M}(u^*TM)$ if and only if $\inner{\si+s,N}=0$, which is the case if and only if $\inner{\si,\rmn}=\cot\theta\inner{s,\nu}$.
But, since locally $\si=\Re(g)u_x-\Im(g)u_y$ (meaning of course that $\si^{0,1}=gu_\zb$, for some function $g$), and in particular on the boundary $\si=\Re(g)u_\tau - \Im(g)u_\rmn$, we see that $\si+s\in\Ga_{\pd M}(u^*TM)$ if and only if $\Im(g)=-\cot\theta\inner{s,\nu}$.
In the free-boundary case we simply have $\Im(g)=0$.}
We wish to transform the boundary condition from an affine-linear condition (as it is now) into a linear condition, so that the boundary condition define a totally real subbundle, and the PDE problem be described as a $\dbar$-problem for sections which lie in a totally real subbundle at the boundary.

For a given $s$, then, fix any real vector field $\zeta_s\in\Ga(\xi)$ such that $\Im(\inner{\zeta_s^{0,1},\pdz})=-\cot\theta\inner{s,\nu}$ on $\pd\Si$.
If $\chi=\si+\zeta_s$, then on the boundary $\Im(\inner{\chi^{0,1},\pdz})=0$, which is a linear condition, and equivalent to $\chi\in\Ga(F)$ where $F\to\pd\Si$ is the totally real subbundle of $\xi^{0,1}$ whose sections are given locally by $g\pdzb$, where $g$ is a \emph{real} function.
Moreover, $\si$ solves \eqref{eqn:zero-inf-conform-deficit} above if and only if $\chi$ solves
\begin{equation}
\label{eqn:zero-inf-conform-deficit-transformed}
\left\{
\begin{aligned}
D\chi^{0,1} &= \phi(s) + D\zeta_s^{0,1} &\qquad\text{on }\Si \\
\chi|_{\pd\Si} &\in F. &\end{aligned}
\right.
\end{equation}
Define for convenience $\psi(s):=\phi(s)+D\zeta_s^{0,1}$.

By the Fredholm alternative, \eqref{eqn:zero-inf-conform-deficit-transformed} can be solved if and only if $\psi(s)\perp\ker D^*$, where
$$ D^*:\Ga_F(\xi^{0,1}\tensor\wedge^{1,0})\to\Ga(\xi^{0,1}). $$
We may accordingly decompose
$V = \ker\psi\oplus V_1\oplus V_2$
where
$$ V_1=\cbrak{s\stbar\psi(s)\in\rbrak{\ker D^*}^\perp}
\qquad\text{and}\qquad
V_2=\cbrak{s\stbar\psi(s)\in\ker D^*}.
$$
According to our splitting then, given a basis $s_1,\ldots,s_n$ for $\ker\psi\oplus V_1$, we have corresonding solutions $\chi_1,\ldots,\chi_n$ to \eqref{eqn:zero-inf-conform-deficit-transformed}, and therefore corresponding  solutions $\si_1,\ldots\si_n$ to \eqref{eqn:zero-inf-conform-deficit}, and so by Theorem \ref{thm:area-energy-comparison} we have that for each $i=1,\ldots,n$,
$$ Q\rbrak{\inner{s_i,\nu},\inner{s_i,\nu}} = Q_{\calE}\rbrak{s_i+\si_i,s_i+\si_i}. $$
Straightforwardly then, we have
$$ \dim\rbrak{\ker\psi\oplus V_1}
= \indx\rbrak{Q_{\calE}\stbar{\ker\psi\oplus V_1}}
\leq \indx{Q_\calE}.
$$

It remains to show that $\dim_{\R}\ker(D^*)\leq r$.
An integration by parts shows that $D^*=i\star\overline{\pd}:\Ga_F(\xi^{0,1}\tensor\wedge^{1,0})\to\Ga(\xi^{0,1})$, where $\star$ is the Hodge star.
Therefore $\ker(D^*)=H^0_F\rbrak{\xi^{0,1}\tensor\kappa}$, the space of holomorphic sections of the bundle $\xi^{0,1}\tensor\kappa$ which lie in $F$ at the boundary.
(Here, $\kappa=\wedge^{1,0}$ is the canonical bundle of $\Si$.)
We compute this using the Riemann--Roch theorem as presented in \cite{mcduff-salamon-2012}.

Firstly, the index formula from \cite[Thm.~C.1.10]{mcduff-salamon-2012} applied to the operator $\dbar:\Ga_F(\xi^{1,0})\to\Ga(\xi^{1,0}\tensor\overline{\kappa})$ (note that this $\dbar$ operator has different domain and range to that above) gives
$$
\dim\ker\dbar - \dim\coker\dbar = \chi(\Si) + \mu(\xi^{1,0},F),
$$
where $\chi(\Si)$ and $\mu(\xi^{1,0},F)$ are the Euler characteritic and boundary Maslov index respectively.
Secondly, we have that $\ker\dbar=H^0_F(\xi^{1,0})$, so $\dim\ker\dbar=2h^0(\xi^{1,0})$.\footnote{We denote by $h^0$ the complex dimension of the space, so the real dimension is $2h^0$.}
Thirdly, the equivalence \cite[(C.1.3)]{mcduff-salamon-2012} gives that $\coker\dbar\iso\ker(\dbar^*:\Ga_F(\xi^{1,0}\tensor\overline{\kappa})\to\Ga(\xi^{1,0}))$, and similarly to above we have $\dbar^*=-i\star\pd:\Ga_F(\xi^{1,0}\tensor\overline{\kappa})\to\Ga(\xi^{1,0})$; taking moreover the complex conjugate we have $\coker\dbar\iso H_F^0(\xi^{0,1}\tensor\kappa)=\ker(D^*)$, so our Riemann--Roch formula is
\begin{equation}
\label{eqn:rrformula}
2h^0(\xi^{1,0})
- 2h^0(\xi^{0,1}\tensor\kappa)
=
\chi(\Si) + \mu(\xi^{1,0},F).
\end{equation}

By \cite[Thm.~C.1.10~\textit{(iii)}]{mcduff-salamon-2012}, there are three cases.
Firstly, if $\mu(\xi^{1,0},F)<0$ then $\dbar:\Ga_F(\xi^{1,0})\to\Ga(\xi^{1,0}\tensor\overline{\kappa})$ is injective, for which reason $h^0(\xi^{1,0})=0$, and by \eqref{eqn:rrformula} we have
$
2h^0(\xi^{0,1}\tensor\kappa)
=
-\chi(\Si)-\mu(\xi^{0,1},F)
$.
Secondly, if $\mu(\xi^{1,0},F)+2\chi(\Si)>0$, then $\dbar:\Ga_F(\xi^{1,0})\to\Ga(\xi^{1,0}\tensor\overline{\kappa})$ is surjective, so $h^0(\xi^{0,1}\tensor\kappa)=0$.
Finally, suppose that $0\leq\mu(\xi^{1,0},F)\leq -2\chi(\Si)$.
We may as well assume that $h^0(\xi^{0,1}\tensor\kappa)>0$.
If we also assume that $h^0(\xi^{1,0})>0$, then the bundle is special, so applying Clifford's theorem gives the estimate (where $\Tilde{\Si}$ is the double $\Si\cup\hat{\Si}$, as in \cite{lima-2022}, to which we refer for full details),
$$
2h^0(\xi^{0,1}\tensor\kappa)
\leq
\inner{c_1(\xi^{0,1}\tensor\kappa),\Tilde{\Si}} + 2.
$$
But even in the case that $h^0(\xi^{1,0})=0$, this estimate holds: indeed, by \eqref{eqn:rrformula} we have $2h^0(\xi^{0,1}\tensor\kappa)=-\chi(\Si)-\mu(\xi^{1,0},F)$; combining this with the fact that (by assumption) $-\chi(\Si)\geq0$ and that $\inner{c_1(\xi^{1,0}),\tilde{\Si}}=-\mu(\xi^{1,0},F)-2\chi(\Si)$ gives $2h^0(\xi^{0,1}\tensor\kappa) \leq \inner{c_1(\xi^{0,1}\tensor\kappa),\Tilde{\Si}}$, from which the claimed estimate easily follows.

Finally, we compute the boundary Maslov index.
By \cite[Thm.~C.3.10]{mcduff-salamon-2012} and symmetry, we have that
\begin{align*}
\mu(\xi^{1,0},F) &= \inner{c_1(\xi^{1,0}),[\Tilde{\Si}]} \\
&=
\inner{c_1(T^{1,0}\Tilde{\Si}),[\Tilde{\Si}]}
+ \inner{c_1(D),[\Tilde{\Si}]}
\\
&=
\chi(\Tilde{\Si}) +2b +d
\\
&= 2\chi(\Si) +2b+d.
\end{align*}

Putting these cases together, and computing the various Euler characteristics and Chern classes, we have
\begin{equation}
2h^0(\xi^{0,1}\tensor\kappa)
\leq
\begin{cases}
6g - 6 + 3m - 2b - d &\text{if }2b+d<4g-4+2m,\\
4g - 2 + 2m - 2b + 2\floor{-d/2} &\text{if } 4g-4+2m\leq2b+d\leq8g-8+4m,\\
0 &\text{if }8g-8+4m<2b+d.
\end{cases}
\end{equation}

\end{proof}

\section{Energy Index Bound}

To obtain our energy index bound, we will be analysing the bundle heat kernel $\overline{K}:\Si\times\Si\times(0,\infty)\to u^*TM$ defined as
\begin{equation*}
\left\{
\begin{aligned}
\rbrak{\pderiv{}{t}-\overline{\Delta}}\overline{K}(x,y,t) &= 0\qquad\text{on }\Si\times\Si\times(0,\infty) \\
\rbrak{\nabla_\rmn - B\sin\theta}\overline{K}(x,y,t) &= 0\qquad\text{on }\pd\Si\times\Si\times(0,\infty).
\end{aligned}
\right.
\end{equation*}
Actually, we will estimate the bundle heat kernel by the following heat kernel on functions, $K:\Si\times\Si\times(0,\infty)\to\R$ defined as
\begin{equation*}
\left\{
\begin{aligned}
\rbrak{\pderiv{}{t}-\Delta}K(x,y,t) &= 0\qquad\text{on }\Si\times\Si\times(0,\infty) \\
\pderiv{K}{\rmn}(x,y,t) &= 0\qquad\text{on }\pd\Si\times\Si\times(0,\infty).
\end{aligned}
\right.
\end{equation*}

\begin{theorem}
\label{thm:energy-index-bound}
There exists a universal constant $C>0$ such that for any Riemannian manifold $M$ with boundary, isometrically embedded into some Euclidean space $\R^d$, and for any oriented Riemann surface $\Si$ of genus $g$ with $m\neq0$ boundary components, if $u:\Si\to M$ is a $\theta$-capillary $h$-\cmc{} surface, then
$$ i_{\calE} \leq C\rbrak{1+\frac{1}{\sin\theta}}^2(J^2+B^2+h^2)\calA(u) $$
where $J$ is the sup-norm of the largest eigenvalue of the second fundamental form of the embedding $M\into\R^d$, and $B=\norm{A^{\pd M}}_\infty$.
\end{theorem}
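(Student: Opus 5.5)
We bound the energy index using the heat-kernel method of Cheng--Tysk, as adapted to free boundary by Lima. The idea is to dominate $Q_\calE$ from below by a scalar Schrödinger-type form with a Robin boundary term, and then count negative eigenvalues through a trace of the heat kernel.

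\textbf{Step 1: a lower bound for $Q_\calE$.} Using the embedding $M\into\R^d$, the Gauss equation gives $\Rm^M(v,E_i,E_i,v)\le 2J^2\abs{v}^2$. The volume terms satisfy
$$h\Vol^M(v,\nabla_{u_x}v,u_y)+h\Vol^M(v,u_x,\nabla_{u_y}v)\ge -\tfrac12\abs{\nabla v}^2 - Ch^2\abs{v}^2,$$
by Cauchy--Schwarz and conformality; note that $\abs{u_x}=e^\la$ is absorbed by $\ed\Si$. On the boundary, $\abs{\sin\theta\inner{A^{\pd M}(\gadot,\gadot),N}}\le B$. Hence
$$Q_\calE(v,v)\ge \tfrac12\geomint{\Si}{\abs{\nabla v}^2 - C(J^2+h^2)\abs{v}^2\,\ed\Si} - B\geomint{\pd\Si}{\abs{v}^2\,\ed\tau}.$$
The difficulty is the boundary term. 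We remove it either by absorbing it through a trace inequality, or by working directly with the Robin heat kernel $\overline{K}$ defined above (the factor $1/\sin\theta$ enters when $B$ is renormalised). In either case, by the minimax principle $i_\calE$ is at most the number of eigenvalues below $\Lambda:=C'(1+1/\sin\theta)^2(J^2+B^2+h^2)$ of the operator $-\overline\Delta$ with the Robin condition $(\nabla_\rmn - B\sin\theta)v=0$.

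\textbf{Step 2: counting eigenvalues by the heat trace.} If $\lambda_k$ are the Robin eigenvalues, then $\#\{\lambda_k<\Lambda\}\le e^{\Lambda t}\sum_k e^{-\lambda_k t}=e^{\Lambda t}\geomint{\Si}{\tr\overline K(x,x,t)\,\ed\Si}$. Kato's inequality together with the parabolic maximum principle, using that the boundary term has the right sign after the Robin shift, gives $\abs{\overline K(x,y,t)}\le 3e^{\beta t}K(x,y,t)$ for some $\beta\lesssim B^2$. The rank of $u^*TM$ is $3$.

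\textbf{Step 3: the scalar Neumann kernel.} We need an on-diagonal bound $K(x,x,t)\le C/t$ for $t$ up to about $1/\Lambda$. For this we use the Sobolev inequality of Michael--Simon type for surfaces in $\R^d$ with boundary, including the free/capillary boundary version as in Lima. Since $u$ has mean curvature vector in $\R^d$ bounded by $C(h+J)$, the Sobolev constant depends only on these quantities, with an extra boundary term controlled by the contact angle. This is where the $(1+1/\sin\theta)$ dependence arises: the boundary $\pd\Si$ meets $\pd M$ at angle $\theta$, and the boundary term of the Sobolev inequality is controlled by $1/\sin\theta$ times a tangential quantity. A Nash--Moser iteration then yields $K(x,x,t)\le C/t$ for $t\le c(1+1/\sin\theta)^{-2}(J^2+B^2+h^2)^{-1}$.

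\textbf{Step 4: conclusion.} Choosing $t=1/\Lambda$ gives
$$i_\calE\le 3e^{2}\geomint{\Si}{K(x,x,t)\,\ed\Si}\le C\Lambda\,\calA(u),$$
which is the claimed bound.

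The main obstacle is Step 3 combined with the boundary term of Step 1. We must obtain a Sobolev or Nash inequality that is uniform in the geometry but sees the boundary only through $B$, $\theta$ and $J$. The first thing to try is Lima's doubling and reflection argument, or the Michael--Simon inequality applied to $u$ viewed in $\R^d$ with the boundary handled by a cutoff in a collar, with the collar width scaled by $\sin\theta/(J+B+h)$.
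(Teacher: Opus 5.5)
Your outline follows the paper's architecture: a lower bound for $Q_\calE$ with a Robin-type boundary term, a heat-trace count, domination of $\overline K$ by three copies of the scalar Neumann kernel $K$, and an on-diagonal bound for $K$ from a Michael--Simon-type inequality. (The paper gets the on-diagonal bound from an ODE for $K(x,x,t)^{-1/2}$ rather than from Nash--Moser, which is immaterial.) However, the step you flag as the main obstacle is a genuine gap, and your plan for it cannot work.

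Simon's argument gives $\sqrt{2\pi}\norm{f}_{L^2}\le\int_\Si(\abs{\nabla f}+\abs{fH})+\int_{\pd\Si}\abs f$. The paper (Lemma \ref{lem:sobolev}, after Edelen) controls the boundary integral by testing the first variation formula with $fX$, where $X=N$ on $\pd M$ and $\abs X\le1$. The contact angle gives $\inner{X,\rmn}=\sin\theta$ on $\pd\Si$, hence $\sin\theta\int_{\pd\Si}\abs f\le\int_\Si(\abs{\nabla f}+\abs{fH}+2\abs{\nabla X}\abs f)$. This is where $1+1/\sin\theta$ comes from. But $\abs{\nabla X}$ is governed by the focal radius $\rho$ of $\pd M$ (Proposition \ref{prop:extend-normal}), not by $J$, $B$, $h$, so a collar of width $\sin\theta/(J+B+h)$ need not exist.

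In fact no estimate with constants depending only on $J,B,h,\theta$ can hold, not even the statement itself. Take $M=T^2\times[0,\eps]$ with $T^2$ a flat square torus of side $\ell\ge7$, embedded as a product of circles, so that $B=0$ and $J=2\pi/\ell$. One of the two strips in which the radius-one cylinder about the line $\{y=y_0,\,z=\eps/2\}$ meets $M$ is an embedded $1$-\cmc{} annulus with $\cos\theta=\eps/2$ and $\calA(u)\approx\ell\eps$. Since $M$ is flat, $v=\cos(kx)\pd_x\mp\sin(kx)\pd_y$ with $k=2\pi/\ell$ (and its phase shift) is tangent to $\pd M$. For the appropriate sign it satisfies $Q_\calE(v,v)=\int_\Si(k^2-hk\inner{e_2,\pd_z})\,\ed\Si<0$, where $e_2$ is the unit tangent to the circular arcs. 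Hence $i_\calE\ge2$, while $(1+1/\sin\theta)^2(J^2+B^2+h^2)\calA(u)\approx4(1+4\pi^2\ell^{-2})\ell\eps\to0$ as $\eps\to0$. Your Step 3 claim fails directly on the same example: on this flat annulus of width $\approx\eps$, $K(x,x,t)\approx(4\pi t)^{-1/2}\eps^{-1}\gg t^{-1}$ for $\eps^2\ll t\lesssim1$.

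Consistently, the paper's own constant $L=\sqrt{h^2+4J^2}+2/(\rho(\sin\theta+1))$ contains $\rho$. It disappears only through the closing assertion that $\alpha=\frac12L^2\delta(\delta+1)$ is comparable to $J^2+h^2+B^2$; here $\rho=\eps/2$ and that assertion fails. What either argument actually proves is the bound with $J^2+B^2+h^2+\rho^{-2}$ in place of $J^2+B^2+h^2$.

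Your Step 2 has a related problem. Under $\nabla_\rmn\overline K=B\sin\theta\,\overline K$ one gets $\pd_\rmn\abs{\overline K}=B\sin\theta\abs{\overline K}\ge0$, an inward flux. That is the wrong sign for a maximum-principle comparison with the Neumann kernel, and a spatially constant factor $e^{\beta t}$ does not change the boundary flux. Moreover, $\beta\lesssim B^2$ fails in general: $\overline k(t)\ge e^{-\overline\la_1t}$, and when $M$ is flat near $u(\Si)$ a parallel test section gives $-\overline\la_1\ge B\sin\theta\abs{\pd\Si}/\calA(u)$. (The paper also keeps the Robin condition and simply cites Urakawa and Lima for the trace comparison.)

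The clean repair is your first option, made precise by the same Edelen estimate applied to $f=\abs v^2$:
\[
B\sin\theta\int_{\pd\Si}\abs v^2\,\ed\tau\le\frac14\int_\Si\abs{\nabla v}^2\,\ed\Si+\Big(4B^2+B\big(h+2\rho^{-1}\big)\Big)\int_\Si\abs v^2\,\ed\Si .
\]
Then $Q_\calE(v,v)\ge\int_\Si\frac14\abs{\nabla v}^2-C(J^2+h^2+B^2+\rho^{-2})\abs v^2\,\ed\Si$, which is a pure Neumann problem. For it, Kato's inequality gives $\tr\overline K(x,x,t)\le3K(x,x,t)$ with no exponential factor, and the rest of your outline goes through with the corrected constant.
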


\begin{proof}
Estimating as in \cite[Proof of Thm.~3.2]{seemungal-sharp-2026}, we have
\begin{align*}
Q_\calE(v,v)
&= \geomint{\Si}{
	\abs{\nabla v}^2
	- \Rm^M(v,E_i,E_i,v)
	+ h\rbrak{
		\Vol^M(v,\nabla_{e^{-\la}u_x}v,e^{-\la}u_y)
		+ \Vol^M(v,e^{-\la}u_x,\nabla_{e^{-\la}u_y}v)
	}
\,\ed\Si
}
\\&\qquad
- \geomint{\pd\Si}{
\sin\theta\inner{A^{\pd M}(\dot{\ga},{\dot{\ga}}),N}\abs{v}^2
\,\ed\tau
}
\\
&\geq
\frac{1}{2}
\geomint{\Si}{
\abs{\nabla v}^2
- (4J^2+2h^2)\abs{v}^2
\,\ed\Si
}
- \geomint{\pd\Si}{
B\sin\theta\abs{v}^2
\,\ed\tau
}
\\
&=
\frac{1}{2}
\geomint{\Si}{
\inner{-\rbrak{\overline{\Delta}+4J^2+2h^2}v,v}
\,\ed\Si
}
+
\geomint{\pd\Si}{
\inner{\nabla_\rmn v,v}
- B\sin\theta\abs{v}^2
\,\ed\tau.
}
\\
&\geq
\frac{1}{2}
\geomint{\Si}{
\inner{-\rbrak{\overline{\Delta}+4J^2+2h^2+2B^2}v,v}
\,\ed\Si
}
+
\geomint{\pd\Si}{
\inner{\nabla_\rmn v,v}
- B\sin\theta\abs{v}^2
\,\ed\tau.
}
\end{align*}
Therefore, if $\overline{\la}$ are the eigenvalues corresponding to
\begin{align*}
-\overline{\Delta}v &= \overline{\la}v\qquad\textrm{on }\Si\\
\rbrak{\nabla_\rmn - B\sin\theta}v &= 0\qquad\textrm{on }\pd\Si,
\end{align*}
then
$
i_{\calE}+n_{\calE}
\leq
\#\cbrak{\overline{\la}\leq 4J^2+2h^2+2B^2}
$
(counted of course with multiplicity).
Now, for all $t>0$, we have
\begin{equation}
\#\cbrak{\overline{\la}\leq 4J^2+2h^2+2B^2}
\leq
\sum_i e^{-t(\overline{\la}_i-(4J^2+2h^2+2B^2))}
\label{eqn:count-eval-est}
=
e^{(4J^2+2h^2+2B^2)t}\overline{h}(t),
\end{equation}
where $\overline{k}(t)=\sum_i e^{\overline{\la}_i t}$ is the trace of the heat kernel $\overline{K}$.
Arguing as in \cite[Thm~2.1]{urakawa-1987} and \cite[p.~15]{lima-2022} gives that $k(t)\leq3\overline{k}(t)$, where $k(t)$ is now the trace of the heat kernel $K$.
Our goal has therefore now changed to finding bounds on $k(t)$.

Indeed, take the Sobolev inequality from Lemma \ref{lem:sobolev},
\begin{align*}
\frac{\sqrt{2\pi}\sin\theta}{\sin\theta+1}\rbrak{\int_\Si\!f^2}^{1/2}
&\leq
\geomint{\Si}{\abs{\nabla f}}
+ \rbrak{\sqrt{h^2+4J^2} + \frac{2}{\rho(\sin\theta+1)}}\geomint{\Si}{\abs{f}}
\\
&=:
\geomint{\Si}{\abs{\nabla f}}
+ L\geomint{\Si}{\abs{f}},
\end{align*}
where we have defined for convenience $L:=\sqrt{h^2+4J^2}+2/(\rho(\sin\theta+1))$.
Replacing $f$ with $f^2$, and applying the Peter--Paul inequality
$2\abs{f}\abs{\nabla f}\leq\frac{\abs{\nabla f}^2}{\delta L} + \delta Lf^2$
gives
\begin{equation*}
\rbrak{\frac{\sqrt{2\pi}\sin\theta}{\sin\theta+1}}
\rbrak{\geomint{\Si}{f^4}}^{1/2}
\leq
\frac{1}{\delta L}\geomint{\Si}{\abs{\nabla f}^2}
+ L(\delta+1)\geomint{\Si}{\abs{f}^2}.
\end{equation*}
The interpolation inequality
$\norm{f}^3_{L^2} \leq \norm{f}^2_{L^4} \norm{f}_{L^1}$
then gives
\begin{equation}
\label{eqn:tracebound1}
\rbrak{\frac{\sqrt{2\pi}\sin\theta}{\sin\theta+1}}
\frac{\rbrak{\geomint{\Si}{f^2}}^{3/2}}{\geomint{\Si}{\abs{f}}}
\leq
\frac{1}{\delta L}\geomint{\Si}{\abs{\nabla f}^2}
+ L(\delta+1)\geomint{\Si}{\abs{f}^2}.
\end{equation}
Getting back to bounding $k$, we set $f(y):=K(x,y,t/2)$, and use the various standard properties of the heat kernel; in particular, we calculate that
\begin{align*}
\pderiv{}{t}\rbrak{K(x,x,t)}
&= \geomint{\Si}{K(x,y,t/2)\rbrak{\pd_t K}(x,y,t/2)\,\ed y} \\
&= \geomint{\Si}{K(x,y,t/2)\rbrak{\Delta_y K}(x,y,t/2)\,\ed y} \\
&= -\geomint{\Si}{\abs{\nabla K}^2(x,y,t/2)\,\ed y}
+ \geomint{\pd\Si}{K(x,y,t/2)\pderiv{K}{\rmn}(x,y,t/2)\,\ed y} \\
&= -\geomint{\Si}{\abs{\nabla K}^2(x,y,t/2)\,\ed y}.
\end{align*}
Furthermore, $K(x,x,t)=\geomint{\Si}{K(x,y,t/2)^2\,\ed y}$, and $\geomint{\Si}{\abs{K(x,y,t/2)}} \leq 1$, so using these formulae in \eqref{eqn:tracebound1} gives
\begin{equation*}
\frac{\sqrt{2\pi}\sin\theta}{\sin\theta+1} K(x,x,t)^{3/2}
\leq
L(\delta+1)K(x,x,t)
- \frac{1}{\delta L}\pd_t K(x,x,t).
\end{equation*}
Since $K>0$ for $t>0$, we set $\phi(t):=K(x,x,t)^{-1/2}$, yielding
\begin{equation*}
\frac{\sqrt{2\pi}\sin\theta}{\sin\theta+1}
\leq
L(\delta+1)\phi
+ \frac{2}{\delta L}\phi'.
\end{equation*}
Multiplying by the integrating factor gives
\begin{equation*}
\sqrt{\frac{\pi}{2}}
\frac{\delta L\sin\theta}{\sin\theta+1}
\exp\rbrak{\frac{1}{2}L^2\delta(\delta+1)t}
\leq
\deriv{}{t}\rbrak{
\exp\rbrak{\frac{1}{2}L^2\delta(\delta+1)t}\phi(t)
}.
\end{equation*}
Integrating (using the fact that $\phi(0)=0$) and subsequently solving for $\phi^{-2}=K(x,x,t)$ yields
\begin{equation*}
K(x,x,t)
\leq
\frac{1}{2\pi}\rbrak{1+\frac{1}{\sin\theta}}^2
(\delta+1)^2
L^2
\rbrak{
\frac{\exp\rbrak{\frac{1}{2}L^2\delta(\delta+1)t}}
{\exp\rbrak{\frac{1}{2}L^2\delta(\delta+1)t}-1}
}^2.
\end{equation*}
But of course, from \eqref{eqn:count-eval-est}, and since $k(t)=\geomint{\Si}{K(x,x,t)\,\ed\Si(x)}$, we have
$$
i_{\cal{E}}+n_{\cal{E}}
\leq
e^{\rbrak{4J^2+2h^2+2B^2}t}k(t)
=
e^{\rbrak{4J^2+2h^2+2B^2}t}\geomint{\Si}{K(x,x,t)}
$$
and so we arrive at (after some mild algebraical manipulation)
\begin{equation*}
i_{\calE} + n_{\calE}
\leq
\frac{1}{2\pi}
\rbrak{1+\frac{1}{\sin\theta}}^2
(\delta+1)^2
L^2
\calA(u)
\rbrak{
\frac{\exp\rbrak{\sbrak{\frac{1}{2}L^2\delta(\delta+1) + 2J^2 + h^2 + B^2}t}}
{\exp\rbrak{\frac{1}{2}L^2\delta(\delta+1)t}-1}
}^2.
\end{equation*}
We may optimise our inequality by infimising the right hand side over $t>0$.
Indeed, the function needing to be minimised is of the form
$f(t)= {e^{(\al+\be)t}}/\rbrak{e^{\al t} - 1}$,
whose minimal value is
$f(t_0)=(\be/\al)\rbrak{\al/\be+1}^{1+\be/\al}$.
Now, it is easy to see that there exist universal constants $c,C,c',C'>0$ such that
\begin{align*}
c(J^2+h^2+B^2)  \leq & \alpha \leq C(J^2+h^2+B^2)\\
c'(J^2+h^2+B^2) \leq & \beta \leq C'(J^2+h^2+B^2),
\end{align*}
for which reason $\alpha/\beta$ and $\beta/\alpha$ are bounded from above and below by universal constants, and hence
\begin{equation*}
i_{\calE} + n_{\calE}
\leq
C
\rbrak{1+\frac{1}{\sin\theta}}^2
(J^2+B^2+h^2)
\calA(u).
\end{equation*}
\end{proof}

\subsection{Sobolev inequality}

We will find, in proving a Sobolev inequality for boundaried branched immersions $u:\Si\to M$, that we will need to extend the normal vector field $N$ of $\pd M$ some distance into the interior of $M$.
We will desire for this extension to have as small gradient as possible, so that our Sobolev inequality might be more optimal, and so we want to extend the normal a large distance into the interior of $M$.
On the other hand, if we extend too far, then we may find the extension to be ill-defined.
It is clear then that we need to consider a modification of the notion of \emph{focal radius}, which we define below.

\begin{definition}
The \emph{focal radius} of a manifold $M$ with boundary is the largest radius $r>0$ such that every point on the boundary is touched by an interior ball of that radius; \ie
$$ \sup\cbrak{r>0 \stbar \forall y \in\pd M, \exists B(x,r)\subset M \suchthat y\in\pd B(x,r)}. $$
\end{definition}

We can now extend $N$ smoothly into $M$ in the following way: as long as we stay within a distance of the focal radius of the boundary, we can extend $N$ by parallelly transporting it along geodesic lines orthogonal to the boundary.
At the focal radius, we may encounter a non-uniqueness problem, which is resolved by making the vector field decay to zero as we approach the focal radius.
Naturally, our choice of decay factor will determine the constant in the Sobolev inequality.
In the following proposition we construct this extension.

\begin{prop}
\label{prop:extend-normal}
Suppose that $M$ is a manifold with boundary, whose outward-pointing unit normal we denote by $N$, and whose inner focal radius we denote by $\rho$.
Then for every $\eps\in(0,\rho)$, there is a vector field $X_\eps\in\Ga(M)$ such that $\abs{X_\eps}\leq 1$ and $\nabla X_\eps \leq 1/(\rho - \eps)$.
\end{prop}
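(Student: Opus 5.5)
Write $d(x)=\dist(x,\pd M)$ for the distance to the boundary. The plan is to take $X_\eps$ to be a cut-off multiple of the gradient of $d$ near the boundary. Let $\eta\colon[0,\infty)\to[0,1]$ be the piecewise-linear function with $\eta(0)=1$, slope $-1/(\rho-\eps)$ on $[0,\rho-\eps]$, and $\eta\equiv0$ after $\rho-\eps$. A smoothing of $\eta$ only costs an arbitrarily small loss in the slope, which is absorbed by shrinking $\eps$ slightly. Define
$$ X_\eps(x) := -\eta(d(x))\,\nabla d(x) $$
on $\{d<\rho\}$, and $X_\eps:=0$ elsewhere. On $\pd M$ we have $\nabla d=-N$, so $X_\eps=N$ there. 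This vector field is exactly the parallel transport of $N$ along the inward normal geodesics, multiplied by the decay factor $\eta$, as described before the proposition.

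\textbf{Step 1: smoothness of $d$.} By the definition of the focal radius, every $y\in\pd M$ is touched by an interior ball $B(x,r)$ for every $r<\rho$. Consequently the normal exponential map $(y,s)\mapsto\exp_y(-sN(y))$ is injective and has no focal points for $s<\rho$. Hence $d$ is smooth on $\{d<\rho\}$ with $\abs{\nabla d}=1$, and $\nabla d$ is the parallel (geodesic) extension of $-N$. It follows at once that $\abs{X_\eps}\le\eta\le1$ everywhere.

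\textbf{Step 2: the gradient bound.} Compute
$$ \nabla X_\eps = -\eta'(d)\,\nabla d\otimes\nabla d - \eta(d)\,\nabla^2 d. $$
The first term has norm at most $1/(\rho-\eps)$. The second term contains the Hessian of $d$, which is the second fundamental form of the parallel hypersurfaces $\{d=s\}$.

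Here I expect the main obstacle. We need to control $\nabla^2 d$ by $1/(\rho-s)$, or absorb it otherwise. The interior-ball condition gives a one-sided bound: the level sets $\{d=s\}$ are touched from inside by balls of radius $\rho-s$. A Hessian comparison for the distance to $\pd B(x,\rho)$ then yields $\nabla^2 d\ge -\tfrac{1}{\rho-s}\,(\text{up to curvature terms})$. The bound in the other direction would have to come from the same touching-ball argument applied along the geodesic through the level set.

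I would try first to show that, in the sense needed for the Sobolev inequality, only the component $\inner{\nabla_{\cdot}X_\eps,\cdot}$ restricted suitably, or $\dvg X_\eps$, matters. For that component the one-sided comparison suffices. If the full norm bound is genuinely required, I would use the supporting-ball comparison together with the choice of $\eta$ vanishing before $\rho$, giving $\abs{\eta(s)\nabla^2 d}\lesssim(\rho-\eps-s)_+/((\rho-\eps)(\rho-s))\le 1/(\rho-\eps)$. The remaining task is to check that the two terms combine without doubling the constant; if they do not, one adjusts $\eps$, which is harmless since $\eps$ is arbitrary.
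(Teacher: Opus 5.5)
Your $X_\eps=-\eta(d)\nabla d$ is the paper's construction. In the paper's notation $X_\eps=\phi_\eps\tilde N$, with $\phi_\eps=\eta\circ d$ and $\tilde N=-\nabla d$ the parallel transport of $N$ along the inward normal geodesics. Your Step 1 is sound. The paper then takes the bound on $\nabla X_\eps$ from the slope of $\phi_\eps$ alone and never mentions the term $\phi_\eps\nabla\tilde N=-\eta(d)\nabla^2 d$. So the obstacle you isolate is a real defect of that argument, not something you have missed.

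However, your proposal does not close it, and the fallback you sketch for a full norm bound cannot work. The touching balls bound $-\nabla^2 d$ from above only; from below nothing can be said. Take $M=\overline{B(0,R)}\setminus B(0,r)\subset\R^3$ with $R>3r$. Then $\rho=(R-r)/2$. On the inner sphere $N=-x/r$, so \emph{any} $X$ with $X|_{\pd M}=N$ satisfies $\nabla_eX=-e/r$ for $e$ tangent to that sphere. Hence $\abs{\nabla X}\geq 1/r>1/(\rho-\eps)$ whenever $\eps<\rho-r$. In particular your estimate $\abs{\eta(s)\nabla^2 d}\lesssim(\rho-\eps-s)_+/((\rho-\eps)(\rho-s))$ already fails at $s=0$, where the left side equals $1/r$. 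So the proposition, read as a norm bound on an extension of $N$, is false. Read literally, with no boundary condition on $X_\eps$, it is trivially satisfied by $X_\eps=0$.

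What is true, and is all the Sobolev lemma actually uses, is your one-sided version. For $f\geq0$ one has $\sin\theta\int_{\pd\Si}f=\int_\Si\inner{\nabla f,X}+f\dvg_\Si X+f\inner{X,H}$. So only an upper bound $\dvg_\Si X\le 2\sup_{\abs{e}=1}\inner{\nabla_eX,e}$ enters, and this can replace the paper's $\max\abs{\nabla X}$. You can finish that version as follows.

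First, $\abs{\nabla d}=1$ forces $\nabla^2 d(\nabla d,\cdot)=0$. So the symmetric tensor $\nabla X_\eps=-\eta'(d)\,\nabla d\otimes\nabla d-\eta(d)\nabla^2 d$ is block-diagonal for $\R\nabla d\oplus(\nabla d)^\perp$, and the two terms never add. Shrinking $\eps$ could not have absorbed a factor $2$ for $\eps<\rho/2$ anyway.

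Second, at $p$ with $d(p)=s<\rho$, let $x$ be the centre of the touching ball of radius $\rho$. Then $d\geq\rho-\dist(\cdot,x)$ with equality at $p$, so $-\nabla^2 d(p)\le\nabla^2\dist(\cdot,x)(p)$. For flat $M$ this equals $\frac{1}{\rho-s}$ on $(\nabla d)^\perp$. Hence $-\eta(s)\nabla^2 d\le\frac{\rho-\eps-s}{(\rho-\eps)(\rho-s)}\le\frac{1}{\rho-\eps}$ there, and $\inner{\nabla_eX_\eps,e}\le\frac{1}{\rho-\eps}\abs{e}^2$ for all $e$.

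In curved $M$ the Hessian comparison for $\dist(\cdot,x)$ adds a curvature term, namely $+\sqrt K$ if the sectional curvature is at least $-K$. This term cannot be dropped. For a geodesic ball of large radius $R$ in hyperbolic space, $\rho=R$, while $\inner{\nabla_eN,e}=\coth R\approx 1$ for unit $e$ tangent to $\pd M$. So the statement to prove is a one-sided bound with a curvature correction, not the proposition as written.
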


\begin{proof}
Since $0<\rho-\eps<\rho$, there is a smooth function $\phi_\eps:M\to\R$ which is unity on $\pd M$, decreasing monotonically along geodesic lines emanating orthogonally from $\pd M$, zero on $M_{-(\rho-\eps)}:=\cbrak{x\in M : \dist(x,\pd M)<\rho-\eps}$ (the $(\rho-\eps)$-thinning of $M$), and moreover whose derivative is not steeper than $1/(\rho-\eps)$.
Indeed, for the very same reason, one can parallelly transport $N$ into $M\setminus M_{-(\rho-\eps)}$ along geodesic lines, emanating orthogonally from $\pd M$, obtaining a smooth vector field $\tilde{N}$ on $M\setminus M_{-(\rho-\eps)}$.
Defining then
$$ X_\eps = \begin{cases}
\phi_\eps\tilde{N} &\text{on } M\setminus M_{-(\rho-\eps)}\\
0 &\text{elsewhere}
\end{cases} $$
yields the requisite vector field.
\end{proof}

The proof of the following Sobolev inequality is based on the method due to L.~Simon (as explained in for instance the Appendix to \cite{topping-2008}).

\begin{lemma}\label{lem:sobolev}
Let $\Si$ be a surface with boundary, and $M$ a compact Riemannian manifold with boundary, embedded, thanks to the Nash isometric embedding theorem, into $\R^d$.
Suppose also that $u:\Si\to M$ is a boundaried branched immersion, and extend it to $u:\Si\to M \into\R^d$.
Then for any $f\in W^{1,1}(\Si)\cap L^1(\pd\Si)$, we have
\begin{equation*}
\sqrt{2\pi}\rbrak{\int_\Si\!f^2}^{1/2}
\leq
\rbrak{1+\frac{1}{\sin\theta}}\int_\Si\!\abs{\nabla f}
+ \rbrak{1+\frac{1}{\sin\theta}}\int_\Si\!\abs{fH}
+ \frac{2}{\rho\sin\theta}\int_\Si\!\abs{f},
\end{equation*}
where $H$ is the mean curvature vector of the immersion of $\Si$, not into $M$, but into $\R^d$.
\end{lemma}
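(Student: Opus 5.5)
Following Simon's method, I would start from the first variation (divergence) formula for the immersion $\Si\to\R^d$ and test it against a radial vector field. Fix $x_0=u(p)$, $p\in\Si$, a cutoff $\phi(r)$ with $r=|u-x_0|$, and the field $X=\phi(r)(u-x_0)f$. The tangential divergence identity on $\Si$ is
\[
\int_\Si \dvg_\Si X = -\int_\Si \langle X,H\rangle + \int_{\pd\Si}\langle X,\rmn\rangle.
\]
In dimension two it produces the monotonicity-type inequality for $r^{-2}\int_{B_r}f$, which will eventually give $\sqrt{2\pi}\|f\|_{L^2}$. The interior terms are handled exactly as in the closed case (Topping's appendix). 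They give, for each $\rho'$,
\[
\frac{d}{d\rho'}\Big(\rho'^{-2}\int_{B_{\rho'}}f\Big)\geq -\rho'^{-2}\int_{B_{\rho'}}(|\nabla f|+|fH|) + \text{boundary term}.
\]
Branch points cause no problem, since they have measure zero and $\ed\Si$ is simply the pulled-back area.

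The new ingredient is the boundary term $\int_{\pd\Si}\phi f\langle u-x_0,\rmn\rangle$, which has no sign. I would control it by trading it back into interior quantities using the extended normal field $X_\eps$ of Proposition~\ref{prop:extend-normal}. By the $\theta$-capillary condition, along $\pd\Si$ we have $\langle\rmn,N\rangle=\sin\theta$. Hence for any scalar weight $w\ge0$,
\[
\int_{\pd\Si} w
=\frac{1}{\sin\theta}\int_{\pd\Si} w\langle X_\eps,\rmn\rangle
=\frac{1}{\sin\theta}\int_\Si\big(\dvg_\Si(wX_\eps)+w\langle X_\eps,H\rangle\big).
\]
Since $|X_\eps|\le1$ and $|\nabla X_\eps|\le 1/(\rho-\eps)$, this is bounded by
\[
\frac{1}{\sin\theta}\int_\Si\Big(|\nabla w|+|wH|+\tfrac{2}{\rho-\eps}w\Big).
\]
Here $\dvg_\Si X_\eps$ is at most $2|\nabla X_\eps|$, which is the source of the constant $2/\rho$; letting $\eps\to0$ removes $\eps$. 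Applying this with $w=\phi|f||u-x_0|$, or applying it directly to $|f|$ after bounding $|u-x_0|\le r$, yields the extra $\frac{1}{\sin\theta}(\int|\nabla f|+\int|fH|)$ terms and the $\frac{2}{\rho\sin\theta}\int|f|$ term. Together with the interior terms this produces the factor $(1+1/\sin\theta)$.

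A cleaner route to the same end is to run Simon's argument with a modified field. One replaces $X$ by $X-\frac{\langle X,\rmn\rangle}{\sin\theta}\cdots$, or more simply bounds $\int_{\pd\Si}$ as above with the radially weighted $w$, so that the monotonicity inequality closes with effective derivative $(1+\frac1{\sin\theta})(|\nabla f|+|fH|)+\frac{2}{\rho\sin\theta}|f|$. Then, exactly as in Simon's/Topping's argument, I would integrate in $\rho'$ from $0$ to $\infty$ and use $\lim_{\rho'\to0}\rho'^{-2}\int_{B_{\rho'}}|f|\ge\pi|f(p)|$, which holds at any point including boundary and branch points after care. Note that at boundary points only a half-density $\pi/2$ is guaranteed, which is why the constant is $\sqrt{2\pi}$ rather than $\sqrt{4\pi}$. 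This gives the pointwise bound
\[
\pi|f(p)|\lesssim \int_\Si \frac{g}{|u-p|}\quad\text{or}\quad\int_\Si g,
\]
with $g$ the combined density. A standard covering and Vitali argument then converts this into the $L^1\to L^2$ Sobolev inequality.

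The main obstacle is the boundary term. In particular, one must ensure that the weight $w$ used in the $X_\eps$ divergence trick interacts correctly with the radial cutoff, so that the $\rho'$-derivative structure of the monotonicity formula is preserved and no logarithmic loss appears. I would first try to make the boundary contribution a nonnegative multiple absorbed with the good sign, by using the modified test field $X+\frac{\phi f|u-x_0|}{\sin\theta}X_\eps$, which is tangent to $\pd M$ compatibly with capillarity. Only if that fails would I fall back to crude bounds with slightly worse constants.
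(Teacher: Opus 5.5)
\textbf{Verdict: the proposal has two genuine gaps.} The passage to the $L^2$ bound cannot give the stated constants, and the boundary term is left unresolved. The parts you get right are the radial first-variation test, which after integrating in $\rho'$ is the paper's test with $f\,x/|x|^2$, and the $X_\eps$ boundary estimate using $\inner{N,\rmn}=\sin\theta$, which is exactly what the paper uses.

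\textbf{First gap: the $L^2$ step.} A Vitali covering argument gives an inequality of this shape only with some unspecified universal constant. It cannot produce $\sqrt{2\pi}$ or the exact coefficients $1+1/\sin\theta$ and $2/(\rho\sin\theta)$ that the lemma claims. The missing idea is the second test field in Simon's two-dimensional argument: apply the first variation formula to $f(y)\,y/|y|$. Since $\dvg_\Si(y/|y|)\ge 1/|y|$ and $|\inner{y/|y|,\rmn}|\le 1$, one gets for every base point $x$:
\[
\int_\Si\frac{f(y)}{|x-y|}\,\ed y\le\int_\Si\big(|\nabla f|+|fH|\big)+\int_{\pd\Si}f=:G .
\]
Boundary base points are allowed here, because $y/|y|$ is bounded and carries no point mass. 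Now multiply the pointwise bound
\[
2\pi f(y)\le\int_\Si\frac{|\nabla f|(x)+|fH|(x)}{|x-y|}\,\ed x+\int_{\pd\Si}\frac{f(x)}{|x-y|}\,\ed x
\]
by $f(y)$, integrate in $y$, and swap the order of integration. This gives $2\pi\int_\Si f^2\le G^2$ directly, with no covering.

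Two remarks on constants. The $2\pi$ is the flux of $x/|x|^2$ around an interior point; the closed case gives the same $\sqrt{2\pi}$. The boundary is a null set and plays no role, so your half-density explanation is incorrect. Also, your displayed monotonicity inequality with $\rho'^{-2}\int_{B_{\rho'}}$ drops the weight $|u-x_0|/\rho'$. After integrating in $\rho'$ this leaves only $\pi f(p)$ on the left instead of $2\pi f(p)$.

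\textbf{Second gap: the boundary term.} Your worry about it is justified. With the cutoff, including your modified test field, the $\phi'$-terms in $\nabla w$ have indefinite sign and are of size $\frac{\rho'}{\sin\theta}\frac{\ed}{\ed\rho'}\int_{B_{\rho'}}|f|$. Absorbing them lowers the scaling exponent of the monotone quantity below $2$, so its limit as $\rho'\to0$ no longer recovers $f(p)$. If you instead integrate in $\rho'$ first, the weight becomes $1/|u-p|$. Then $\nabla(|f|/|u-p|)$ contains $|f|/|u-p|^2$, which is not integrable near $p$ in dimension two.

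The paper avoids all of this by ordering the estimates differently:
\begin{enumerate}
\item Bound the boundary flux crudely by $|\inner{x/|x|^2,\rmn}|\le 1/|x|$.
\item Carry $\int_{\pd\Si}f(x)/|x-y|\,\ed x$ through the Fubini step above. There it collapses into the unweighted $\int_{\pd\Si}f$ already inside $G$, so $\sqrt{2\pi}\|f\|_{L^2}\le G$.
\item Only then apply your $X_\eps$ estimate, with $w=|f|$ and no radial weight:
\[
\int_{\pd\Si}|f|\le\frac{1}{\sin\theta}\Big(\int_\Si|\nabla f|+\int_\Si|fH|+\frac{2}{\rho-\eps}\int_\Si|f|\Big).
\]
\item Let $\eps\to0$.
\end{enumerate}
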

\begin{proof}
We begin with two applications of the first variation formula
\begin{equation}\label{eqn:sob-firstvar}
\int_\Si\!\dvg_\Si{v} = \int_\Si\!-\inner{v,H} + \int_{\pd\Si}\!\inner{v,\rmn}
\end{equation}
to two different functions.
The following two computations will be useful:
\begin{align}
\dvg_\Si \frac{x}{\abs{x}^2} &= \frac{2\abs{x^\perp}^2}{\abs{x}^4} \label{eqn:sob-comp1}\\
\dvg_\Si \frac{x}{\abs{x}} &= \frac{1}{\abs{x}} + \frac{\abs{x^\perp}^2}{\abs{x}^3} \geq \frac{1}{\abs{x}}, \label{eqn:sob-comp2}
\end{align}
where $x=u(p)$ is the position vector of $u$ in $\R^d$.
Throughout, for ease of notation, we will confuse the position vector $x(p)$ (or $y(p)$) with the point $p\in\Si$, even though $u$ is an immersion.

To begin with, we may assume that $f>0$, since otherwise we could replace $f$ with $\abs{f}$.
We may also assume, by translating appropriately, that the immersion maps some interior point of $\Si$ to the origin in $\R^d$.

As promised, we firstly apply \eqref{eqn:sob-firstvar} to $v:=f(x)\frac{x}{\abs{x}^2}$; using \eqref{eqn:sob-comp1} and estimating we obtain
\begin{align}
2\pi f(0) &= -\int_\Si\!\rbrak{\inner{\nabla f,\frac{x}{\abs{x}^2}} + 2f\frac{\abs{x^\perp}^2}{\abs{x}^4} + f\inner{\frac{x^\perp}{\abs{x}^2},H}} + \int_{\pd\Si}\! f\inner{\frac{x,}{\abs{x}^2},\rmn} \nonumber\\
	&\leq \int_\Si\! \rbrak{\frac{\abs{\nabla f(x)}}{\abs{x}} + \frac{f(x)\abs{H(x)}}{\abs{x}}} + \int_{\pd\Si}\! \frac{f(x)}{\abs{x}}. \label{eqn:sob-est1}
\end{align}
On the other hand, we secondly apply \eqref{eqn:sob-firstvar} to $v:=f(y)\frac{y}{\abs{y}}$; using the estimate \eqref{eqn:sob-comp2} and estimating further we obtain
\begin{align}
\int_{\Si}\!\frac{f}{\abs{y}}
	&\leq -\int_\Si\!\rbrak{\inner{\nabla f,\frac{y}{\abs{y}}} + f\inner{\frac{y}{\abs{y}},H}} + \int_{\pd\Si}\!f(y)\inner{\frac{y}{\abs{y}},\rmn} \nonumber\\
	&\leq \int_\Si\!\rbrak{\abs{\nabla f(y)} + f(y)\abs{H(y)}} + \int_{\pd\Si}\! f(y). \label{eqn:sob-est2}
\end{align}
Translating the origin to an arbitrary point $y\in\Si\subset\R^d$, \eqref{eqn:sob-est1} becomes
$$ 2\pi f(y) \leq \int_\Si\!\rbrak{\frac{\abs{\nabla f(x)}}{\abs{x-y}} + \frac{\abs{f(x)H(x)}}{\abs{x-y}}}\,\ed x + \int_{\pd\Si}\!\frac{f(x)}{\abs{x-y}}\,\ed x. $$
Multiplying by $f(y)$ and integrating with respect to $y$ gives
\begin{equation}
2\pi\int_\Si\!f^2 \leq \int_\Si\rbrak{\int_\Si\!\frac{f(y)}{\abs{x-y}}\,\ed y}\rbrak{\abs{\nabla f(x)} + \abs{f(x)H(x)}}\,\ed x
+ \int_{\pd\Si}\!\rbrak{\frac{f(y)}{\abs{x-y}}\,\ed y}f(x)\,\ed x.
\label{eqn:sob-est3}
\end{equation}
On the other hand, translating the origin in \eqref{eqn:sob-est2} to an arbitrary $x$, we have
$$ \int_\Si\!\frac{f(y)}{\abs{x-y}}\,\ed y \leq \int_\Si\!\rbrak{\abs{\nabla f(y)} + \abs{f(y)H(y)}}\,\ed y + \int_{\pd\Si}\!f(y)\,\ed y, $$
which, applied to the above \eqref{eqn:sob-est3}, gives
\begin{align*}
2\pi\int_\Si\! f^2 \leq \sbrak{\int_\Si\!\rbrak{\abs{\nabla f} + \abs{fH}} + \int_{\pd\Si} f}^2.
\end{align*}
Estimating now $\int_{\pd\Si}\!f$ using the method of \cite[Lemma 2.1]{edelen-2016} gives
$$ \int_{\pd\Si}\!\abs{f}
\leq
\frac{\max\abs{X}}{\sin\theta}\int_\Si\!\abs{\nabla f}
+ \frac{2\max\abs{\nabla X}}{\sin\theta}\int_\Si\!\abs{f}
+ \frac{\max\abs{X}}{\sin\theta}\int_\Si\!\abs{fH},
$$
where $X$ is a vector field on $M$ such that on $\pd M$, $X=N$.
Combined with the above, this yields
\begin{equation*}
\sqrt{2\pi}\rbrak{\int_\Si\!f^2}^{1/2}
\leq
\rbrak{1+\frac{\max\abs{X}}{\sin\theta}}\int_\Si\!\abs{\nabla f}
+ \rbrak{1+\frac{\max\abs{X}}{\sin\theta}}\int_\Si\!\abs{fH}
+ \frac{2\max\abs{\nabla X}}{\sin\theta}\int_\Si\!\abs{f}.
\end{equation*}
We may take specifically, for each $\eps\in(0,\rho)$, $X$ to be the vector field $X_\eps$ obtained in Proposition \ref{prop:extend-normal}; we obtain
\begin{equation*}
\sqrt{2\pi}\rbrak{\int_\Si\!f^2}^{1/2}
\leq
\rbrak{1+\frac{1}{\sin\theta}}\int_\Si\!\abs{\nabla f}
+ \rbrak{1+\frac{1}{\sin\theta}}\int_\Si\!\abs{fH}
+ \frac{2}{(\rho-\eps)\sin\theta}\int_\Si\!\abs{f},
\end{equation*}
and taking $\eps\to0$ yields the result.
\end{proof}

\appendix

\section{Variations of the area, enclosed-volume, and wetting functionals}

In a classical work on capillary \cmc{} surfaces, Ros and Souam \cite{ros-souam-1997} derive the second variation formula \eqref{eqn:hess-mod-area-cmc-cap} at a critical point of $\calA^{h,\theta}$ (\ie{} a capillary \cmc{} surface) for volume-preserving and boundary-area-preserving variations.
In our work, however, we require these formulae away from critical points and for non-admissible variations.
For completeness, we give the second variations of the area $\calA$, volume $\calV$, and wetting $\calW$ functionals away from critical points, that is, at boundaried branched immersions.
We combine these functionals to give the second variation for $\calA^{h,\theta}$, where $h$ and $\theta$ are not necessarily constants; \ie{} for the second variations for prescribed mean curvature and contact angle problem.
When the prescribed mean curvature and contact angles are constant, we recover the classical formula in Lemma \ref{lem:hess-mod-area-cmc-cap}, but holding for any variation, not just those which are volume- and boundary-area-preserving.
Moreover, we have the second variation formula for the modified Dirichlet energy $\calE^{h,\theta}$ in Lemma \ref{lem:hess-mod-energy}.

For those readers who have jumped straight to this appendix, let us state that we rely on some terminology and notations set up in Section \ref{sec:prelims}.

\subsection{The area functional}

For clarity, we derive the second variation of the area functional.
We have done this for closed surfaces in \cite{seemungal-sharp-2026}, but here we of course must include rather than discard the boundary integrals.
Let $u:\Si\to M$ be a boundaried branched immersion.
It is convenient to isometrically embed $M\into \R^d$.
Denote by $D$ the Euclidean connection on $\R^d$, by $\nabla$ the Levi-Civita connection on $M$, by $\perp$ the projection in $\R^d$ normal to $\Si$, by $\perp_M$ the projection in $\R^d$ normal to $M$, by $A$ the second fundamental form of $\Si$ in $M$, and by $\sff$ the second fundamental form of $M$ in $\R^d$.

Given a smooth variation $\Phi(t):I\times\Si\to M$ through $u$, there exist $v\in\Ga(u^*TM)$ and $w\in\Ga(u^*T\R^d)$ such that
$$ \Phi(t) = u + tv + \frac{1}{2}t^2 w + O(t^3). $$
According then to the usual variational formula (see \eg{}\cite{simon-1983}), we have
\begin{equation*}
\derivat{^2}{t^2}{t=0}\calA(\Phi(t))
= \geomint{\Si}{
\abs{\rbrak{Dv}^\perp}^2
+ \rbrak{\dvg_\Si v}^2
- \inner{E_i,\nabla_{E_j}v}\inner{E_j,\nabla_{E_i}v}
+ \dvg_\Si w
\,\ed\Si,
}
\end{equation*}
where we suppress summation over the orthonormal frame $E_i$ of $T_p\Si$.
Now we split $D_{E_i}v=\nabla v + \sff(E_i,v)$, and use the Gauss equations to introduce a $\Rm^M$ term.
Moreover, since $w=\Phi''(0)$, we have $w^{\perp_M}=\sff(v,v)$, so we also have the splitting $w=w^{\tgt_M} + \sff(v,v)$, giving
\begin{equation}
\label{eqn:twoderivs-area-initialform}
\derivat{^2}{t^2}{t=0}\calA(\Phi(t))
= \geomint{\Si}{
\abs{\rbrak{\nabla v}^\perp}^2
- \Rm^M(v,E_i,E_i,v)
+ \rbrak{\dvg_\Si v}^2
- \inner{E_i,\nabla_{E_j}v}\inner{E_j,\nabla_{E_i}v}
+ \dvg_\Si w^{\tgt_M}
\,\ed\Si.
}
\end{equation}
Somewhat abusively, we just write
\begin{equation}
\label{eqn:twoderivs-area-untensorial}
\derivat{^2}{t^2}{t=0}\calA(\Phi(t))
= \geomint{\Si}{
\abs{\rbrak{\nabla v}^\perp}^2
- \Rm^M(v,E_i,E_i,v)
+ \rbrak{\dvg_\Si v}^2
- \inner{E_i,\nabla_{E_j}v}\inner{E_j,\nabla_{E_i}v}
+ \dvg_\Si(\nabla_vv)
\,\ed\Si.}
\end{equation}

\begin{lemma}
Let $u:\Si\to M$ be a boundaried branched immersion, and suppose that $\Phi(t):\Si\to M$ is a smooth variation through $u$ with initial velocity $\Phi'(0)=:v\in\Ga(u^*TM)$, which splits into $v=s+\si$ with $s\in\nu\Si$ and $\si\in\xi$.
Then
\begin{multline}
\label{eqn:twoderivs-area}
\derivat{^2}{t^2}{t=0}\calA(\Phi(t))
=
\geomint{\Si}{
\inner{-\Delta^\perp s,s}
- \Ric^M(s,s)
+ \inner{s,H}^2
- \inner{s,A(E_i,E_j)}^2
- \inner{\nabla_ss,H}
+ \inner{\sbrak{\si,s},H}
\,\ed\Si
}
\\
+ \geomint{\pd\Si}{
\inner{\nabla_\rmn s,s}
+ \dvg_\Si\si\inner{\si,\rmn}
- 2\inner{s,H}\inner{\si,\rmn}
+ \inner{\nabla_ss,\rmn}
- \inner{\nabla_\si s,\rmn}
+ \inner{\nabla_s\si,\rmn}
}
\,\ed\tau
\end{multline}
where $E_i$ is a local orthonormal frame for $\xi$, $A$ is the second fundamental form of $u:\Si\to M$, and $H=\tr A$ is the mean curvature.
\end{lemma}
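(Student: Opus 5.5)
We start from the untensorial form \eqref{eqn:twoderivs-area-untensorial}, substitute $v=s+\si$, and sort the integrand into three groups: the purely normal terms, the purely tangential terms, and the mixed terms. Everything except the normal Jacobi-type terms should then be written as a divergence. In $\abs{(\nabla v)^\perp}^2$ we write $(\nabla_{E_i}v)^\perp=\nabla^\perp_{E_i}s+A(E_i,\si)$. In $\dvg_\Si v$ we write $\dvg_\Si s=-\inner{s,H}$. In the cross term we use $\inner{E_i,\nabla_{E_j}s}=-\inner{s,A(E_i,E_j)}$. For the curvature term, we symmetrise $\Rm^M(v,E_i,E_i,v)$ and use the Gauss equation to express $\Rm^M(\si,E_i,E_i,\si)$ through $\Ric^\Si$ and $A$. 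We use Codazzi--Mainardi, exactly as in the proof of Theorem~\ref{thm:area-energy-comparison}, to trade $\Rm^M(\si,E_i,E_i,s)$ for derivatives of $\inner{H,s}$ and $\inner{A(\si,\cdot),s}$.

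\emph{Normal part.} We integrate $\abs{\nabla^\perp s}^2$ by parts, which gives $\inner{-\Delta^\perp s,s}$ plus the boundary term $\inner{\nabla_\rmn s,s}$. Combining $\Rm^M(s,E_i,E_i,s)=\Ric^M(s,s)$ (in codimension one) with the $A$-terms gives $-\Ric^M(s,s)+\inner{s,H}^2-\inner{s,A(E_i,E_j)}^2$.

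\emph{Tangential part.} Here we reuse verbatim the identity for $\dvg_\Si(\nabla^\tgt_\si\si)$ from the term \eqref{eqn:C2} computation. After Gauss, the combination $\abs{A(\cdot,\si)}^2$, the $\Rm^M(\si,\cdot,\cdot,\si)$ term, $(\dvg_\Si\si)^2$ and the cross term collapse to $\dvg_\Si(\dvg_\Si\si\cdot\si-\nabla^\tgt_\si\si)$ plus the $\si$-part of $\dvg_\Si(\nabla_vv)$. This is a reparametrisation identity, so it must be a pure divergence.

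\emph{Mixed part.} Using the one-form $\om$ from the \eqref{eqn:C3} computation, the mixed terms reduce to divergences plus the interior remainder $-\inner{\nabla_ss,H}+\inner{[\si,s],H}$. This remainder arises because $\dvg_\Si(\nabla_vv)$ contains $\dvg_\Si(\nabla_ss)^\tgt=-\inner{\nabla_ss,H}+\dvg_\Si(\nabla_ss)$, and $\nabla_\si s-\nabla_s\si=[\si,s]$ appears when the cross terms $\nabla_s\si,\nabla_\si s$ are split. Applying the divergence theorem on $\Si$ to all of these divergences then produces boundary integrands paired with $\rmn$. Branch points are removable for these integrals since all integrands are smooth sections of $u^*TM$.

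\emph{Main obstacle.} The hardest step is the boundary bookkeeping: checking that the boundary terms from the three groups combine exactly to $\dvg_\Si\si\inner{\si,\rmn}-2\inner{s,H}\inner{\si,\rmn}+\inner{\nabla_ss,\rmn}-\inner{\nabla_\si s,\rmn}+\inner{\nabla_s\si,\rmn}$. In particular, the $\inner{\nabla^\tgt_\si\si,\rmn}$ contributions must cancel between the tangential group and $\dvg_\Si(\nabla_\si\si)$, and the $A(\si,\rmn)$ terms must cancel against the tangential part of $\nabla_\si s$. We would carry this out in a local frame with $E_1=\dot\ga$ and $E_2=-\rmn$ along $\pd\Si$, and verify it term by term.
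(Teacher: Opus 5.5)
Your proposal is correct and follows essentially the paper's route: the same split into $s$-, $\si$- and mixed groups, then Gauss plus the $\dvg_\Si(\nabla^\tgt_\si\si)$ identity for the $\si$-group (Claim~I), then Codazzi--Mainardi for the mixed group (Claim~II), then the divergence theorem. Fix two slips in the bookkeeping: Gauss leaves a $+\inner{A(\si,\si),H}$, which is cancelled by the normal part of $\dvg_\Si(\nabla_\si\si)$ (this is why the paper's $\calB$ contains $-\inner{A(\si,\si),H}$), and the correct identity is $\dvg_\Si(\nabla_ss)=\dvg_\Si(\nabla_ss)^\tgt-\inner{\nabla_ss,H}$, not the sign-reversed version you wrote; with these fixes, all divergences combine into the single tangential field $\si\dvg_\Si\si-2\inner{s,H}\si+(\nabla_ss-\nabla_\si s+\nabla_s\si)^\tgt$, so the boundary integrand can be read off directly with no frame computation (and only one of the two copies of $\inner{A(\si,\rmn),s}$ from Codazzi cancels, the survivor being the $-\inner{\nabla_\si s,\rmn}$ in the statement).
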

\begin{proof}
Beginning with \eqref{eqn:twoderivs-area-untensorial}, we use the fact that $\dvg_\Si X= \dvg_\Si\rbrak{X^\tgt} - \inner{X,H}$ (for $X\in\Ga(u^*TM)$) and split $v=s+\si$ to obtain
\begin{multline*}
\derivat{^2}{t^2}{t=0}\calA(\Phi(t)) =
\int_\Si\!
\abs{\rbrak{\nabla s}^\perp}^2
+ \abs{\rbrak{\nabla_{E_i}\si}^\perp}^2
+ 2\inner{\nabla_{E_i}^\perp s,\rbrak{\nabla_{E_i}\si}^\perp}
- \Rm^M\rbrak{s,E_i,E_i,s} \\
- 2\Rm^M(s,E_i,E_i,\si)
- \Rm^M(\si,E_i,E_i,\si)
+ \rbrak{\dvg_\Si\si}^2 \\
+ \inner{s,H}^2
- 2\inner{s,H}\dvg_\Si\si
- \inner{\nabla_{E_i}s,E_j}\inner{E_i,\nabla_{E_j}s} \\
- 2\inner{\nabla_{E_i}s,E_j}\inner{E_i,\nabla_{E_j}\si}
- \inner{\nabla_{E_i}\si,E_j}\inner{E_i,\nabla_{E_j}\si} \\
- \inner{\nabla_ss,H}
- \inner{\nabla_s\si,H}
- \inner{\nabla_\si s,H}
- \inner{A(\si,\si),H} \\
+ \dvg_\Si\rbrak{\nabla_ss + \nabla_\si s + \nabla_s\si + \nabla_\si\si}^\tgt.
\end{multline*}
Above and throughout the following, we suppress the notation for summing over $E_i$.
Now, we use the following easily-derived formulae,
\begin{align*}
\rbrak{\nabla_{E_i}\si}^\perp &= A(E_i,\si) &
\Rm^M(s,E_i,E_i,s) &= \Ric^M(s,s) \\
\inner{\nabla_{E_i}s,E_j}\inner{E_i,\nabla_{E_j}s} &= \inner{s,A(E_i,E_j)}^2 &
\inner{\nabla_{E_i}s,E_j}\inner{E_i,\nabla_{E_j}\si}&=-\inner{s,A\rbrak{\nabla^\tgt_{E_j}\si,E_j}}
\end{align*}
in the above, giving
\begin{multline*}
\derivat{^2}{t^2}{t=0}\calA(\Phi(t))
= \int_\Si\!
\abs{\nabla^\perp s}^2
+ \abs{A(E_i,\si)}^2
+ 2\inner{\nabla^\perp_{E_i} s,A(E_i,\si)}
- \Ric^M(s,s)
- 2\Rm^M(s,E_i,E_i,\si) \\
- \Rm^M(\si,E_i,E_i,\si)
+ \rbrak{\dvg_\Si\si}^2
+ \inner{s,H}^2
- 2\inner{s,H}\dvg_\Si\si \\
- \inner{s,A(E_i,E_j)}^2
+ 2\inner{s,A\rbrak{\nabla^\tgt_{E_i}\si,E_i}}
- \inner{\nabla_{E_i}\si,E_j}\inner{E_i,\nabla_{E_j}\si} \\
- \inner{\nabla_ss,H}
- \inner{\nabla_s\si,H}
- \inner{\nabla_\si s,H}
- \inner{A(\si,\si),H} \\
+ \dvg_\Si\rbrak{\nabla_ss + \nabla_\si s + \nabla_s\si + \nabla_\si\si}^\tgt
\,\ed\Si.
\end{multline*}
Separating the formula into the $s$-parts, the $\si$-parts, the mixed-$s$-$\si$-parts, and the boundary-terms, we arrive at
\begin{multline*}
\derivat{^2}{t^2}{t=0}\calA(\Phi(t))
= \int_\Si\!
\abs{\nabla^\perp s}^2
- \Ric^M(s,s)
+ \inner{s,H}^2
- \inner{s,A(E_i,E_j)}^2
- \inner{\nabla_ss,H} \\
\begin{array}{ll}
\left.\begin{array}{ll}
+ \abs{A(E_i,\si)}^2
- \Rm^M(\si,E_i,E_i,\si)
+ \rbrak{\dvg_\Si\si}^2 \\
- \inner{\nabla_{E_i}\si,E_j}\inner{E_i,\nabla_{E_j}\si}
- \inner{A(\si,\si),H}
\end{array}\right\}:=\calB
\\
\\
\left.\begin{array}{ll}
+ 2\inner{\nabla^\perp_{E_i} s,A(E_i,\si)}
- 2\Rm^M(s,E_i,E_i,\si)
- 2\inner{s,H}\dvg_\Si\si \\
+ 2\inner{s,A\rbrak{\nabla^\tgt_{E_i}\si,E_i}}
- \inner{\nabla_s\si,H}
- \inner{\nabla_\si s,H}
\end{array}\right\}:=\calC
\end{array} \\
+ \dvg_\Si\rbrak{\nabla_ss + \nabla_\si s + \nabla_s\si + \nabla_\si\si}^\tgt
\,\ed\Si.
\end{multline*}
We have the following two claims which, when combined with the above, give the conclusion of the lemma.

\textbf{Claim I.}
$$ \calB = \dvg_\Si\rbrak{\si\dvg_\Si\si - \nabla^\tgt_\si\si}. $$

\textbf{Claim II.}
$$ \calC = \dvg_\Si\rbrak{-2\rbrak{\nabla_\si s}^\tgt - 2\inner{s,H}\si} - \inner{\nabla_s\si, H} + \inner{\nabla^\perp_\si s, H} $$

\textit{Proof of Claim I.} On the one hand, the Gauss equations give $\Rm^M(\si,E_i,E_i,\si)=\Ric^\Si(\si,\si) - \inner{A(\si,\si),H} + \abs{A(E_i,\si)}^2$.
On the other hand, we have the following computation
\begin{align*}
\dvg_\Si\rbrak{\nabla^\tgt_\si\si}
&= \nabla_{E_i}\inner{\nabla_\si\si,E_i} \\
&= \nabla_{E_i}\rbrak{\inner{\si,E_j}\inner{\nabla^\tgt_{E_j}\si,E_i}} \\
&= \inner{\nabla^\tgt_{E_i}\si,E_j}\inner{\nabla^\tgt_{E_j}\si,E_i}
	+ \inner{\si,E_j}\inner{\nabla^\tgt_{E_i}\nabla^\tgt_{E_j}\si,E_i} \\
&= \inner{\si,E_j}\inner{\nabla^\tgt_{E_j}\nabla^\tgt_{E_i}\si,E_i}
	+ \inner{\si,E_j}\Rm^\Si(E_i,E_j,\si,E_i)
	+ \inner{\nabla^\tgt_{E_i}\si,E_j}\inner{\nabla^\tgt_{E_j}\si,E_i} \\
&= \nabla_\si \inner{\nabla^\tgt_{E_i}\si,E_i}
	+ \Rm^\Si(E_i,\si,\si,E_i)
	+ \inner{\nabla^\tgt_{E_i}\si,E_j}\inner{\nabla^\tgt_{E_j}\si,E_i} \\
&= \nabla_\si\dvg_\Si\si
	+ \Ric^\Si(\si,\si)
	+ \inner{\nabla_{E_i}\si,E_j}\inner{E_i,\nabla_{E_j}\si}.
\end{align*}
Combining these two hands, we have
$$ \calB = \nabla_\si\rbrak{\dvg_\Si\si} + \rbrak{\dvg_\Si\si}^2 - \dvg_\Si\rbrak{\nabla^\tgt_\si\si}; $$
applying the formula $\dvg_\Si\rbrak{\si\dvg_\Si\si}=\rbrak{\dvg_\Si\si}^2 + \nabla_\si\rbrak{\dvg_\Si\si}$ yields the claim.

\textit{Proof of Claim II.}
Starting with the Codazzi--Mainardi equation, we proceed with the following computation
\begin{align*}
\Rm^M(s,E_i,E_i,\si) &= \Rm^M(\si,E_i,E_i,s) \\
&= \inner{\rbrak{\nabla_\si A}(E_i,E_i),s}
	- \inner{\rbrak{\nabla_{E_i}A}(\si,E_i),s} \\
&= \inner{\nabla_\si H,s}
	- 2\inner{A\rbrak{\nabla^\tgt_\si E_i,E_i},s}
	- \inner{\nabla_{E_i}A(\si,E_i),s}
	+ \inner{A\rbrak{\nabla^\tgt_{E_i}\si,E_i},s} \\
&= \nabla_\si\inner{H,s}
	- \inner{H,\nabla^\perp_\si s}
	- 2\inner{\si,E_j}\inner{A\rbrak{\nabla^\tgt_{E_j}E_i,E_i},s}
	\\&\qquad
	- \nabla_{E_i}\inner{A(\si,E_i),s}
	+ \inner{A(\si,E_i),\nabla_{E_i}s}
	+ \inner{A\rbrak{\nabla^\tgt_{E_i}\si,E_i},s} \\
&= \nabla_\si\inner{H,s}
	- \inner{H,\nabla^\perp_\si s}
	+ \nabla_{E_i}\inner{E_i,\nabla_\si s}
	+ \inner{A(\si,E_i),\nabla_{E_i}s}
	+ \inner{A\rbrak{\nabla^\tgt_{E_i}\si,E_i},s};
\end{align*}
of course, the third term after the fourth equality vanishes.
Combining this with the expression for $\calC$ above, we have after some cancellations
$$ \calC = - 2\nabla_{E_i}\inner{E_i, \nabla_\si s}
	- 2\inner{s,H}\dvg_\Si\si
	- 2\nabla_\si\inner{H,s}
	- \inner{\nabla_s\si,H}
	+ \inner{\nabla^\perp_\si s,H}.
$$
Expanding the claimed expression for $\calC$ reveals that it is equal to the above, which gives the claim, and hence the lemma.
\end{proof}

\begin{lemma}
Let $u:\Si\to M$ be a boundaried branched immersion, and suppose that $v\in\Ga_{\pd M}(u^*TM)$, which splits into $v=s+\si$ with $s\in\nu\Si$ and $\si\in\xi$.
Then
\begin{equation}
\firstvar{\calA}{u}{v}
=
-\geomint{\Si}{\inner{s,H}\,\ed\Si}
+\geomint{\pd\Si}{\inner{\si,\rmn}\,\ed\tau}
\end{equation}
and
\begin{multline}
\label{eqn:secondvar-area}
\secondvar{\calA}{u}{v,v}
=
\geomint{\Si}{
\inner{-\Delta^\perp s,s}
- \Ric^M(s,s)
+ \inner{s,H}^2
+ \inner{A(\si,\si),H}
- \inner{s,A(E_i,E_j)}^2
+ 2\inner{\nabla_\si s, H}
\,\ed\Si
}
\\
+ \geomint{\pd\Si}{
\inner{\nabla_\rmn s,s}
+ \dvg_\Si\si\inner{\si,\rmn}
- 2\inner{s,H}\inner{\si,\rmn}
- 2\inner{\nabla_\si s,\rmn}
- \inner{\nabla_\si\si,\rmn}
}
\,\ed\tau
\end{multline}
where $E_i$ is a local orthonormal frame for $\xi$, $A$ is the second fundamental form of $u:\Si\to M$, and $H=\tr A$ is the mean curvature.
\end{lemma}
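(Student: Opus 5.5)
The plan is to derive both formulae from the previous lemma, which gives $\frac{\mathrm{d}^2}{\mathrm{d}t^2}\calA(\Phi(t))$ for an arbitrary variation $\Phi$ with $\Phi'(0)=v$. The first variation is the easier part. Write $\firstvar{\calA}{u}{v}=\geomint{\Si}{\dvg_\Si v\,\ed\Si}$ and split $v=s+\si$. Then $\dvg_\Si s=-\inner{s,H}$ because $s$ is normal, and $\dvg_\Si\si$ integrates by the divergence theorem to $\geomint{\pd\Si}{\inner{\si,\rmn}\,\ed\tau}$. At boundary branch points $\rmn$ vanishes, but these points are discrete, so the boundary integral is unaffected. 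This gives the first formula.

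For the second variation, the one feature of \eqref{eqn:twoderivs-area} that depends on the variation itself is the acceleration term. In \eqref{eqn:twoderivs-area-initialform} this term is $\dvg_\Si w^{\tgt_M}$, which the paper abbreviates as $\dvg_\Si(\nabla_vv)$. I would therefore define $\secondvar{\calA}{u}{v,v}$ as the second derivative along the canonical variation whose tangential acceleration is $w^{\tgt_M}=\nabla_vv$; in other words, it is the genuinely bilinear "Hessian part". Since $v\in\Ga_{\pd M}(u^*TM)$, such a variation can be chosen inside $M$ with $\pd\Si$ mapped into $\pd M$, for instance $\Phi(t)=\exp_u(tv)$ corrected near the boundary.

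Concretely, I would take \eqref{eqn:twoderivs-area} and locate the terms that came from $\dvg_\Si(\nabla_vv)^\tgt$. These are the contributions $-\inner{\nabla_ss,H}$ and $\inner{\nabla_ss,\rmn}$, together with the parts of the $\si$-derivatives. I would then rewrite them in the symmetric form claimed. The key identities are the following.
\begin{itemize}
\item $\nabla_s\si-\nabla_\si s=[s,\si]$.
\item $\inner{[\si,s],H}=\inner{\nabla_\si s,H}-\inner{\nabla_s\si,H}$.
\item $\dvg_\Si X=\dvg_\Si X^\tgt-\inner{X,H}$, applied to $X=\nabla_vv$.
\end{itemize}
Expanding $\nabla_vv=\nabla_ss+\nabla_s\si+\nabla_\si s+\nabla_\si\si$ and subtracting $\geomint{\Si}{\dvg_\Si(\nabla_vv)}$ produces the following conversions:
\begin{itemize}
\item $-\inner{\nabla_ss,H}$ and $\inner{\nabla_ss,\rmn}$ cancel.
\item The $\inner{\nabla_s\si,\cdot}$ terms cancel.
\item The $\inner{\nabla_\si s,\cdot}$ terms double, giving $2\inner{\nabla_\si s,H}$ in the interior and $-2\inner{\nabla_\si s,\rmn}$ on the boundary.
\item The term $-\inner{\nabla_\si\si,\rmn}$ appears on the boundary, and $\inner{A(\si,\si),H}$ appears in the interior via $\inner{\nabla_\si\si,H}=\inner{A(\si,\si),H}$.
\end{itemize}

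The main obstacle is bookkeeping: making sure that the convention for what $\secondvar{\calA}{u}{v,v}$ means, namely subtracting the full $\dvg_\Si(\nabla_vv)$ with both its tangential and normal parts, is consistent. It must yield exactly the stated signs, in particular the $+\inner{A(\si,\si),H}$ in the interior and the factor $2$ on $\inner{\nabla_\si s,\cdot}$. I would check this term by term. As a consistency test, the interior and boundary pieces involving only $\si$ should recombine into $\dvg_\Si$ of something times $\dvg_\Si\si$, in line with Claim I of the previous lemma.
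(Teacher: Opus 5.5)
Your computation is correct and is how the lemma follows from the previous one; the paper gives no separate proof. There the Hessian is $\secondvar{\calA}{u}{v,v}=\calA''(0)-\firstvar{\calA}{u}{\nabla_vv}$, exactly as the paper states explicitly for $\calV$, and subtracting $\geomint{\Si}{\dvg_\Si(\nabla_vv)}=-\geomint{\Si}{\inner{\nabla_vv,H}}+\geomint{\pd\Si}{\inner{(\nabla_vv)^\tgt,\rmn}}$ from \eqref{eqn:twoderivs-area} gives exactly the cancellations and doublings you list.

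Drop, however, your framing of $\secondvar{\calA}{u}{v,v}$ as the second derivative along one particular variation. Prescribing acceleration $\nabla_vv$ would just return \eqref{eqn:twoderivs-area}. Zero acceleration, as for $\exp_u(tv)$, is generally unattainable, because on $\pd\Si$ the $N$-component of the acceleration of any variation keeping $\pd\Si$ in $\pd M$ is fixed by the second fundamental form of $\pd M$. So take the subtraction itself as the definition.
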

\subsection{The enclosed-volume functional}

Recall that, if $\Phi:I\times\Si\to M$ is a variation starting at a boundaried branched immersion $u:\Si\to M$, then the generalised volume of this variation at time $t\in I$ is precisely
\begin{equation}
\calV(\Phi(t)) = \geomint{[0,t]\times\Si}{\Phi^*\om},
\end{equation}
where of course $\om=h\Vol^M$ for some prescribed mean curvature $h\in\smooth{M}$.
In the paper, we of course take $h$ to be constant, but we may as well be more general here since it does not require too much effort.

\begin{lemma}
\label{lem:pullback-derivs}
Let $u:\Si\to M$ be a boundaried branched immersion, and suppose that $\Phi:I\times\Si\to M$ is a variation of $u$.
Then we have
\begin{align}
\calV'(t) &=
	\geomint{\Si}{\Phi(t)^*\iota_\psi\om}
\label{firstderiv-vol-gen}\\
\calV''(t) &=
	\geomint{\Si}{\Phi(t)^*\rbrak{\iota_\psi\ed\iota_\psi\om}}.
\end{align}
\end{lemma}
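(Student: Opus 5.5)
The plan is to read $\psi$ as the variational vector field of $\Phi$, namely $\psi=\Phi_*\pd_t$, extended (at least formally) to a vector field along the image; the natural way to make this rigorous is to work on $I\times\Si$ with the pulled-back form $\Phi^*\om$ and the vector field $\pd_t$. Write $j_t:\Si\to I\times\Si$ for $p\mapsto(t,p)$, so that $\Phi(t)=\Phi\circ j_t$. The identity $\Phi^*(\iota_\psi\om)=\iota_{\pd_t}\Phi^*\om$ holds because $\Phi_*\pd_t=\psi$. Every computation below is then performed with $\pd_t$ and $\Phi^*\om$, and translated back at the end.

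For the first derivative, Fubini on $[0,t]\times\Si$ gives
\[
\calV(\Phi(t))=\int_0^t\!\Big(\geomint{\Si}{j_\tau^*\iota_{\pd_t}\Phi^*\om}\Big)\ed\tau .
\]
This holds because, for a top form $\al$ on $I\times\Si$, the fibre integral of $\al$ over $\{\tau\}\times\Si$ against $\ed\tau$ is $j_\tau^*(\iota_{\pd_t}\al)$, with the orientation convention $\ed t\wedge\ed\Si$. The fundamental theorem of calculus then yields $\calV'(t)=\geomint{\Si}{\Phi(t)^*\iota_\psi\om}$, which is \eqref{firstderiv-vol-gen}.

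For the second derivative, differentiate $\tau\mapsto\geomint{\Si}{j_\tau^*\be}$ with $\be:=\iota_{\pd_t}\Phi^*\om$. Since $\pd_t$ generates the flow $j_\tau\mapsto j_{\tau+s}$, we have $\deriv{}{\tau}j_\tau^*\be=j_\tau^*\calL_{\pd_t}\be$. Cartan's formula gives
\[
\calL_{\pd_t}\be=\iota_{\pd_t}\ed\be+\ed\iota_{\pd_t}\be=\iota_{\pd_t}\ed\iota_{\pd_t}\Phi^*\om,
\]
because $\iota_{\pd_t}\iota_{\pd_t}=0$. Pushing this through $\Phi$, using $\Phi^*\ed=\ed\Phi^*$ and the interior-product identity again, gives $\Phi(t)^*(\iota_\psi\ed\iota_\psi\om)$.

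The main obstacle is that $\psi$ is only a section of $\Phi^*TM$ and not a genuine vector field on $M$, so expressions such as $\ed\iota_\psi\om$ on $M$ need interpretation. I would resolve this by declaring the formula to mean its pullback interpretation on $I\times\Si$, where everything is honest. Alternatively, for each fixed $t$ near which $\Phi$ is an embedding in $(t,p)$ locally, one can extend $\psi$ locally; the result is independent of the extension since only $\Phi^*$ of it appears.

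A second point to check is the boundary. Since $\Si$ has boundary, one must verify that no boundary term arises in the second derivative. No term arises because we differentiate under the integral sign over the fixed domain $\Si$ rather than applying Stokes. The term $\ed\iota_{\pd_t}\be$ vanishes identically, so no $\geomint{\pd\Si}{}$ contribution appears.
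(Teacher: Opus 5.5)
Your proposal is correct and follows the same route as the paper. You split $\Phi^*\om$ along $\ed t$ (Fubini) to obtain $\calV'$, then differentiate the slice integral via Cartan's formula for $\calL_{\pd_t}$ and kill the exact term with $\iota_{\pd_t}\iota_{\pd_t}=0$. Carrying out every step on $I\times\Si$ with $\pd_t$ and $\Phi^*\om$ is a welcome tightening, since the paper manipulates $\psi$ as if it were a genuine vector field on $M$ without comment.
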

\begin{proof}
We begin with the useful formula
\begin{equation}
\Phi^*\om = \Phi(s)^*\om + \ed t\wedge\Phi(s)^*\iota_\psi\om.
\label{derivs-vol-gen-useful}
\end{equation}
Indeed, we have
$$
V(t)
= \geomint{[0,t]\times\Si}{\Phi^*\om}
= \geomint{[0,t]\times\Si}{\Phi(s)^*\om + \ed s\wedge\Phi(s)^*\iota_\psi\om}
= \integ{0}{t}{\rbrak{
	\geomint{\Si}{\Phi(s)^*\iota_\psi\om}
	}}{s},
$$
allowing us to differentiate and immediately obtain the first derivative
$$
\calV'(t)
= \geomint{\Si}{\Phi(t)^*\iota_\psi\om}.
$$
For the second derivative, notice first that (again by the useful formula \eqref{derivs-vol-gen-useful}) we have
$$
\calV'(t)
= \geomint{\Si}{\Phi^*\iota_\psi\om - \ed s\wedge\Phi(s)^*\iota_\psi\iota_\psi\om}
= \geomint{\Si}{\Phi^*\iota_\psi\om},
$$
since $\iota_\psi\iota_\psi=0$.
This allows us to use Cartan's magic formula $\calL_{\pd_t}=\iota_{\pd_t}\ed + \ed\iota_{\pd_t}$, and then calling again upon \eqref{derivs-vol-gen-useful} to switch back to pulling back to $\Si$ gives
\begin{align*}
\calV''(t)
= \geomint{\Si}{\Phi^*\iota_\psi\ed\iota_\psi\om
	+ \ed\iota_\psi\iota_\psi\om}
= \geomint{\Si}{\Phi(t)^*\iota_\psi\ed\iota_\psi\om
	+ \ed t\wedge\Phi(t)^*\iota_\psi\iota_\psi\ed\iota_\psi\om}
= \geomint{\Si}{\Phi(t)^*\iota_\psi\ed\iota_\psi\om},
\end{align*}
as desired.
\end{proof}

\begin{lemma}
Let $u:\Si\to M$ be a boundaried branched immersion.
Then for any $v\in\Ga_{\pd M}(u^*TM)$, we have
\begin{align}
\firstvar{\calV}{u}{v}
&=
\geomint{\Si}{u^*\iota_v\om}
=
\geomint{\Si}{\inner{v,h\nu}\,\ed\Si}
\label{firstvar-vol-gen}\\
\secondvar{\calV}{u}{v,v}
&=
\geomint{\Si}{u^*\rbrak{\iota_v\ed\iota_v\om - \iota_{\nabla_vv}\om}}
\nonumber\\
&=
\geomint{\Si}{
\rbrak{
	\rbrak{\nabla_v h}\Vol(v,u_x,u_y)
	+ h\Vol(v,\nabla_{u_x}v,u_y)
	+ h\Vol(v,u_x,\nabla_{u_y}v)
}
\,\ed x\wedge\ed y
}
\label{hess-vol-gen}\\
&=
\begin{multlined}[t]
\geomint{\Si}{
\rbrak{\nabla_\nu h}
\abs{s}^2
-2\inner{\nabla_\si s,h\nu}
-\inner{A(\si,\si),h\nu}
-\inner{s,h\nu}\inner{s,H}
\,\ed\Si}
\\
+ \geomint{\pd\Si}{\inner{s,h\nu}\inner{\si,\rmn}\,\ed\tau}.
\label{hess-vol-gen-detail}
\end{multlined}
\end{align}
\end{lemma}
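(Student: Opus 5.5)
The plan is to derive everything from Lemma~\ref{lem:pullback-derivs}, evaluated at $t=0$ for a variation $\Phi$ with $\Phi'(0)=v$, and then to rewrite the result in terms of the splitting $v=s+\si$.

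\textbf{First variation and the invariant form.} At $t=0$ the lemma gives $\geomint{\Si}{u^*\iota_v\om}$. Since $\om=h\Vol^M$ and $\ed\Si\wedge\nu=\Vol^M$, we have $u^*\iota_v\om=h\Vol^M(v,u_x,u_y)\,\ed x\wedge\ed y=\inner{v,h\nu}\,\ed\Si$. This proves \eqref{firstvar-vol-gen}. For the second variation, the lemma gives $\geomint{\Si}{\Phi(t)^*\iota_\psi\ed\iota_\psi\om}$ with $\psi=\pd_t$. To convert this to an expression in $v$ alone, I expand $\iota_\psi\ed\iota_\psi\om$ using the connection, i.e.\ $\ed\alpha(X,Y,Z)$ written through covariant derivatives. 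I use $\nabla_{\pd_t}\Phi_t=\nabla_{\Phi_x}\Phi_t$ (torsion-freeness) and note that the acceleration $\nabla_{\pd_t}\Phi_t|_{t=0}$ enters linearly through $\iota_{\nabla_vv}\om$. The terms are then: the derivative of $h$ along $v$, and the two terms where the second copy of $v$ is differentiated in the $x$ or $y$ slot. The acceleration term is what gets subtracted, giving $u^*(\iota_v\ed\iota_v\om-\iota_{\nabla_vv}\om)$. A cleaner route I would try first is to compute $\partial_t$ of $h(\Phi)\Vol(\Phi_t,\Phi_x,\Phi_y)$ directly. Using $\nabla\Vol=0$ and $\nabla_{\pd_t}\Phi_x=\nabla_{\pd_x}\Phi_t$, this yields $(\nabla_vh)\Vol(v,u_x,u_y)+h\Vol(\nabla_vv,u_x,u_y)+h\Vol(v,\nabla_{u_x}v,u_y)+h\Vol(v,u_x,\nabla_{u_y}v)$. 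Dropping the $\nabla_vv$ term, which is the non-tensorial part exactly as in \eqref{eqn:twoderivs-area-untensorial}, gives \eqref{hess-vol-gen}.

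\textbf{Detailed form.} I substitute $v=s+\si$ and work in an orthonormal frame $E_1=e^{-\la}u_x$, $E_2=e^{-\la}u_y$, with $s=\inner{s,\nu}\nu$. The term $(\nabla_vh)\Vol(v,E_1,E_2)$ becomes $(\nabla_vh)\inner{s,\nu}$. Here I must decide how $\nabla_\si h$ is absorbed; in the constant-$h$ case it vanishes. I expect the stated $(\nabla_\nu h)\abs{s}^2$ to arise by grouping $\nabla_\si h$ terms into a divergence together with the $\si$-terms. Next, $\Vol(v,\nabla_{E_1}v,E_2)+\Vol(v,E_1,\nabla_{E_2}v)$ is expanded by bilinearity into $ss$, $s\si$, $\si s$ and $\si\si$ pieces. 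In the $ss$ piece both $v$ and one of the tangent slots pair only with the tangential part, giving $-\inner{s,h\nu}\inner{s,H}$-type terms via $\inner{\nabla_{E_i}s,E_i}=-\inner{s,A(E_i,E_i)}$. In the $\si\si$ piece only the normal components of $\nabla\si$ survive, which gives second fundamental form terms. The mixed pieces give $\inner{\nabla_{E_i}s,\nu}$ and $\dvg_\Si\si$ contributions.

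\textbf{Integration by parts.} I then collect the mixed terms into $\dvg_\Si(\inner{s,h\nu}\si)$ plus the remainder $-2\inner{\nabla_\si s,h\nu}-\inner{A(\si,\si),h\nu}$, and apply the divergence theorem. This produces the boundary term $\inner{s,h\nu}\inner{\si,\rmn}$.

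The main obstacle is the sign and bookkeeping in this last step. Orientation conventions such as $\nu=-\star\gadot\wedge\rmn$ must be tracked carefully, so that the mixed terms assemble exactly into a divergence with the stated coefficients.
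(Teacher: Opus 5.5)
Your proposal is correct and follows essentially the same route as the paper: evaluate Lemma~\ref{lem:pullback-derivs} at $t=0$ and subtract the acceleration term $\firstvar{\calV}{u}{\nabla_vv}$, then split $v=s+\si$, expand the volume terms slot by slot, and absorb the mixed terms into $\dvg_\Si(\inner{s,h\nu}\si)$. (Your direct differentiation of $h(\Phi)\Vol(\Phi_t,\Phi_x,\Phi_y)$ using $\nabla\Vol^M=0$ reaches \eqref{hess-vol-gen} slightly more directly than the paper's expansion of $\iota_v\ed\iota_v\om$.) Your expectation about $\nabla_\si h$ is borne out exactly: that divergence produces $(\nabla_\si h)\inner{s,\nu}$, which cancels the $\si$-part of $(\nabla_v h)\inner{s,\nu}$ and leaves $(\nabla_s h)\inner{s,\nu}=(\nabla_\nu h)\abs{s}^2$; also, your first identity $\nabla_{\pd_t}\Phi_t=\nabla_{\Phi_x}\Phi_t$ is a slip and should read $\nabla_{\pd_t}\Phi_x=\nabla_{\pd_x}\Phi_t$, as you use later.
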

\begin{proof}
For the first variation, we simply take the above formula \eqref{firstderiv-vol-gen} and evaluate at $t=0$.
Then, in coordinates $x,y$ of $\Si$ where $\Vol(\nu,u_x,u_y)=e^{2\la}$, we have
$$ u^*(\iota_v\om)
= h\Vol(v,u_x,u_y)\,\ed x\wedge\ed y
= \inner{v,h\nu}e^{2\la}\,\ed x\wedge\ed y. $$
The formula follows.

For the second variation, we have by definition that
$$ \secondvar{\calV}{u}{v,v}
=
\calV''(0) - \firstvar{\calV}{u}{\nabla_vv}
= \geomint{\Si}{u^*\rbrak{\iota_v\ed\iota_v\om - \iota_{\nabla_vv}\om}}.
$$
Now a simple computation shows that
$$ u^*\iota_v\ed\iota_v\om - \iota_{\nabla_v}\om
=
\rbrak{
\rbrak{\nabla_v\om}(v,u_x,u_y)
+ \om\rbrak{v,\nabla_{u_x}v,u_y}
- \om\rbrak{v,\nabla_{u_y}v,u_x}
}
\,\ed x\wedge \ed y
$$
Using the fact that $\om=h\Vol^M$ and that $\nabla\Vol^M=0$ yeilds the second formula for the second variation.

For the third formula, notice that $\Vol^M\rbrak{v,u_x,u_y}=e^{2\la}\inner{s,\nu}$, so
\begin{equation}
\rbrak{\nabla_v h}\Vol^M\rbrak{v,u_x,u_y}=\rbrak{\nabla_\si h + \nabla_s h}e^{2\la}\inner{s,\nu}.
\label{hess-vol-gen-detail-1}
\end{equation}
Furthermore,
\begin{align*}
\Vol^M\rbrak{v,\nabla_{u_x}v,u_y}
&= \inner{s,\nu}\Vol^M\rbrak{\nu,\nabla_{u_x}v,u_y}
	+ \Vol^M\rbrak{\si,\nabla_{u_x}v,u_y} \\
&= \inner{s,\nu}\inner{\nabla_{u_x}v,u_x}\Vol^M\rbrak{\nu,e^{-2\la}u_x,u_y}
	+ \inner{\nabla_{u_x}v,\nu} \Vol^M\rbrak{\si,\nu,u_y} \\
&= \inner{s,\nu}\inner{\nabla_{u_x}v,u_x}
	- \inner{\nabla_{u_x}v,\nu}\inner{\si,u_x}.
\end{align*}
Similarly,
$$ \Vol^M\rbrak{v,u_x,\nabla_{u_y}v} =
\inner{s,\nu}\inner{\nabla_{u_y}v,u_y}
- \inner{\nabla_{u_y}v,\nu}\inner{\si,u_y}. $$
Therefore,
\begin{align*}
h\Vol^M\rbrak{v,\nabla_{u_x}v,u_y} + h\Vol^M\rbrak{v,u_x,\nabla_{u_y}v}
&= e^{2\la}\inner{s,h\nu}\dvg_\Si v
	- e^{2\la}\inner{\nabla_\si v,h\nu} \nonumber\\
&= e^{2\la}\inner{s,h\nu}\dvg_\Si\si
	- e^{2\la}\inner{s,h\nu}\inner{s,H} \nonumber\\
&\qquad - e^{2\la}\inner{\nabla_\si s,h\nu}
	- e^{2\la}\inner{\nabla_\si\si,h\nu}.
\end{align*}
Now, notice that
\begin{align*}
\dvg_\Si\rbrak{\inner{s,h\nu}\si}
&= \nabla_\si \inner{s,h\nu}
	+ \inner{s,h\nu}\dvg_\Si\si \\
&= \inner{\nabla_\si s,h\nu}
	+ \rbrak{\nabla_\si h}\inner{s,\nu}
	+ h\inner{s,\nabla^\perp_\si\nu}
	+ \inner{s,h\nu}\dvg_\Si\si \\
&= \inner{\nabla_\si s,h\nu}
	+ \rbrak{\nabla_\si h}\inner{s,\nu}
	+ \inner{s,h\nu}\dvg_\Si\si,
\end{align*}
where the last equality follows because $h\inner{s,\nabla^\perp_\si\nu} = h\inner{s,\nu}\inner{\nu,\nabla^\perp_\si\nu} = h\inner{s,\nu}\nabla_\si\inner{\nu,\nu} = 0$.
Therefore,
\begin{multline}
e^{-2\la}\rbrak{h\Vol^M\rbrak{v,\nabla_{u_x}v,u_y}
	+ h\Vol^M\rbrak{v,u_x,\nabla_{u_y}v}} \\
= -\rbrak{\nabla_\si h}\inner{s,\nu}
	- 2\inner{\nabla_\si s,h\nu}
	- \inner{\nabla_\si\si,h\nu}
	-\inner{s,h\nu}\inner{s,H}
	+\dvg_\Si\rbrak{\inner{s,h\nu}\si}.
\label{hess-vol-gen-detail-2}
\end{multline}
Combining \eqref{hess-vol-gen-detail-1} with \eqref{hess-vol-gen-detail-2} yields the formula.
\end{proof}

\subsection{The wetting functional}

The wetting functional is precisely, for $\Phi:I\times\Si\to M$ as above,
$$ W(\Phi) = \geomint{I\times\pd\Si}{\Phi^*\chi}, $$
where $\chi=\cos\theta\Area^{\pd M}\in\Om^2(\pd M)$.
We refer back to our computations in Lemma \ref{lem:pullback-derivs}.
Note that we do not assume $\theta$ to be constant, only that $\theta\in\smooth{\pd M,\R}$, so that we situate ourselves rather generally.

\begin{lemma}
Let $u:\Si\to M$ be a boundaried branched immersion, and suppose that $\Phi:I\times\Si\to M$ is a variation of $u$.
Then for any $v\in\Ga_{\pd M}(u^*TM)$, we have
\begin{align}
\firstvar{\calW}{u}{v}
&= \geomint{\pd\Si}{u^*\iota_v\chi}
= - \geomint{\pd\Si}{\cos\theta\inner{v,\hat{\nu}}\,\ed\tau}
\label{firstvar-wet-general}\\
\secondvar{\calW}{u}{v,v}
&= \geomint{\pd\Si}{u^*\rbrak{\iota_v\ed\iota_v\chi - \iota_{\nabla_vv}\chi}}
\nonumber\\
&= \geomint{\pd\Si}{
	\rbrak{\nabla_v\cos\theta}\Area^{\pd M}(v,\dot{\ga})
	+ \cos\theta\Area^{\pd M}(v,\nabla_{\dot{\ga}}v)
	\,\ed\tau
}
\nonumber\\
&= \geomint{\pd\Si}{
	- \rbrak{\nabla_v\cos\theta}\inner{v,\hat{\nu}}
	+ \cos\theta\inner{v,\dot{\ga}}\inner{\nabla_{\dot{\ga}}v,\hat{\nu}}
	- \cos\theta\inner{v,\hat{\nu}}\inner{\nabla_{\dot{\ga}}v,\dot{\ga}}
	\,\ed\tau
}
\nonumber\\
&=
\begin{multlined}[t]
\geomint{\pd\Si}{
	- \rbrak{\nabla_v\cos\theta}\inner{v,\hat{\nu}}
- \pd_\tau\rbrak{\frac{\cos\theta}{\cos\alpha}}\inner{\si,\gadot}\inner{\si,\rmn}
\\
-\frac{\cos\theta\sin\alpha}{\cos\alpha}\inner{A^{\pd M}(\gadot,\gadot),N}\abs{v}^2\,\ed\tau,
}
\label{hess-wet-general}
\end{multlined}
\end{align}
where $\cos\alpha = \inner{\hat{\nu},\rmn}$ as in fig.~\ref{fig:normals}.
\end{lemma}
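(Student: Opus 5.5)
We follow the scheme used for the enclosed-volume functional, now working on $\pd\Si$ with the $2$-form $\chi=\cos\theta\Area^{\pd M}$ on $\pd M$. Since $\Phi(I\times\pd\Si)\subset\pd M$, Lemma \ref{lem:pullback-derivs} applies verbatim with $\Si$ replaced by the one-dimensional manifold $\pd\Si$, $M$ by $\pd M$, and $\om$ by $\chi$. It gives $\calW'(t)=\geomint{\pd\Si}{\Phi(t)^*\iota_\psi\chi}$ and $\calW''(t)=\geomint{\pd\Si}{\Phi(t)^*\iota_\psi\ed\iota_\psi\chi}$. Evaluating at $t=0$ yields the first equality of \eqref{firstvar-wet-general}. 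The second variation is defined as $\calW''(0)-\firstvar{\calW}{u}{\nabla_vv}$, which gives the first line of \eqref{hess-wet-general}. Here $\nabla_vv$ is tangent to $\pd M$ up to the normal component $A^{\pd M}(v,v)N$, and this component is killed by $\iota\chi$ because $\chi$ is a form on $\pd M$.

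For the explicit forms, we parametrise $\pd\Si$ by arc length $\tau$, so that $u^*\iota_v\chi=\cos\theta\Area^{\pd M}(v,\gadot)\,\ed\tau$. Orienting by $\gadot\wedge\hat\nu\wedge N=\star1$ gives $\Area^{\pd M}(v,\gadot)=-\inner{v,\hat\nu}$, which proves \eqref{firstvar-wet-general}. For the second line, we expand $\iota_v\ed\iota_v\chi-\iota_{\nabla_vv}\chi$ exactly as in \eqref{hess-vol-gen}, using that $\Area^{\pd M}$ is parallel for the connection of $\pd M$. This leaves the derivative of the coefficient $\cos\theta$ in direction $v$, plus $\cos\theta\Area^{\pd M}(v,\nabla_{\gadot}v)$. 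Taking the tangential projection to $\pd M$ inside the area form is harmless. Expanding $\Area^{\pd M}(a,b)=\inner{a,\gadot}\inner{b,\hat\nu}-\inner{a,\hat\nu}\inner{b,\gadot}$ gives the third line.

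The main work is the fourth line, which re-expresses everything through $v=s+\si$ with $\si=\inner{\si,\gadot}\gadot+\inner{\si,\rmn}\rmn$. The key relation is that $v\in T\pd M$ along $\pd\Si$. Writing $\hat\nu=\cos\al\,\rmn+\sin\al\,\nu$ (up to sign conventions from fig.~\ref{fig:normals}), the condition $\inner{v,N}=0$ links $\inner{s,\nu}$ and $\inner{\si,\rmn}$ linearly with coefficients in $\al$. Hence $v$ lies in the span of $\gadot$ and $\hat\nu$, with $\inner{v,\hat\nu}=\inner{\si,\rmn}/\cos\al$. We then compute $\inner{\nabla_\gadot v,\hat\nu}$ and $\inner{\nabla_\gadot v,\gadot}$ by differentiating $v=a\gadot+b\hat\nu$ along $\pd\Si$, where $a=\inner{\si,\gadot}$ and $b=\inner{\si,\rmn}/\cos\al$. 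This uses $\inner{\nabla_\gadot\gadot,\hat\nu}$, the geodesic curvature of $\pd\Si$ in $\pd M$, together with $\inner{\nabla_\gadot\hat\nu,\gadot}=-\inner{\nabla_\gadot\gadot,\hat\nu}$. Substituting into the third line, the expression becomes
\[
\cos\theta\bigl(a\,\pd_\tau b-b\,\pd_\tau a\bigr)-\cos\theta\,\kappa_g(a^2+b^2).
\]
We integrate the antisymmetric part by parts around the closed curve $\pd\Si$. This moves $\pd_\tau$ onto the coefficient $\cos\theta/\cos\al$, producing the term $-\pd_\tau(\cos\theta/\cos\al)\inner{\si,\gadot}\inner{\si,\rmn}$.

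The curvature term must then be converted from the geodesic curvature of $\pd\Si$ in $\pd M$ into $A^{\pd M}(\gadot,\gadot)$. The curve $\pd\Si$ has normal curvature vector $\nabla_\gadot\gadot$ in $M$. Its components along $\hat\nu$ and $N$ are related through the surface normal $\nu$ via $\inner{\nabla_\gadot\gadot,\nu}=0$ modulo $A$. Expressing $\rmn$ and $\nu$ in the $(\hat\nu,N)$ basis using $\al$ therefore gives $\kappa_g=\tan\al\,\inner{A^{\pd M}(\gadot,\gadot),N}$ plus terms involving $A(\gadot,\gadot)$. This produces the coefficient $\cos\theta\sin\al/\cos\al$ multiplying $\abs{v}^2=a^2+b^2$.

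This last bookkeeping step is the one we expect to be the main obstacle. It requires getting the signs and orientations of $\al,\hat\nu,\rmn,\nu,N$ consistent, and confirming that no residual $A(\gadot,\gadot)$ term survives, or else that it cancels after the integration by parts.
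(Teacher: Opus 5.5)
\textbf{Where the argument fails.} Your treatment of the first three equalities matches the paper's: Lemma \ref{lem:pullback-derivs} applied on $\pd\Si$ to $\chi$, then expanding $\Area^{\pd M}$ in the frame $\gadot,\hat{\nu}$. Your intermediate identity is also correct: with $a=\inner{v,\gadot}$ and $b=\inner{v,\hat{\nu}}$ one has $\cos\theta\Area^{\pd M}(v,\nabla_{\gadot}v)=\cos\theta\rbrak{a\,\pd_\tau b-b\,\pd_\tau a}+\cos\theta\inner{\nabla_\gadot\gadot,\hat{\nu}}(a^2+b^2)$.

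The gap is the next step. The expression $a\,\pd_\tau b-b\,\pd_\tau a$ is not a total derivative, so integration by parts cannot move every derivative onto $\cos\theta/\cos\al$. It only gives $\oint\cos\theta(a\,\pd_\tau b-b\,\pd_\tau a)=-\oint(\pd_\tau\cos\theta)ab-2\oint\cos\theta\, b\,\pd_\tau a$, and the last integral does not go away. For example, on a circle of radius $r$ with $\theta,\al$ constant, $a=\cos(2\tau/r)$ and $b=\sin(2\tau/r)$ give $\oint(a\,\pd_\tau b-b\,\pd_\tau a)\,\ed\tau=4\pi$. Your claimed output $-\oint\pd_\tau(\cos\theta/\cos\al)\inner{\si,\gadot}\inner{\si,\rmn}$ would be $0$ there.

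The curvature step fails too. From $\hat{\nu}=(\rmn-\sin\al\, N)/\cos\al$ one gets
\[
\cos\theta\inner{\nabla_\gadot\gadot,\hat{\nu}}=\frac{\cos\theta}{\cos\al}\inner{\nabla_\gadot\gadot,\rmn}-\frac{\cos\theta\sin\al}{\cos\al}\inner{A^{\pd M}(\gadot,\gadot),N}.
\]
The geodesic curvature $\inner{\nabla_\gadot\gadot,\rmn}$ of $\pd\Si$ in $\Si$ (equivalently, the residual $A(\gadot,\gadot)$ term you worried about) has nothing to cancel against.

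\textbf{The target equality is false.} No bookkeeping can close this, because the last equality in \eqref{hess-wet-general} does not hold. Take $M=\{x_3\ge 0\}\subset\R^3$, so $A^{\pd M}=0$, and take $\theta$ constant. Let $\Si$ be a spherical cap other than a hemisphere, meeting $\pd M$ in a circle of radius $r$, so that $\al$ is a nonzero constant. Let $v=c\hat{\nu}$ on $\pd\Si$ with $c$ constant (a dilation).
\begin{itemize}
\item The third line gives $-\oint\cos\theta\, c\inner{\nabla_\gadot(c\hat{\nu}),\gadot}\,\ed\tau=-2\pi c^2\cos\theta$.
\item This is the true second derivative of the swept area $-\pi\cos\theta\rbrak{(r+ct)^2-r^2}$ for the straight-line variation, where $\nabla_vv=0$.
\item The fourth line gives $0$.
\end{itemize}

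\textbf{How the paper reaches it.} The paper's proof gets to the fourth line only through errors of exactly this kind.
\begin{itemize}
\item \eqref{hess-wet-x-2} asserts $\inner{\nabla_\gadot\si,\gadot}=0$, but in fact it equals $\pd_\tau\inner{\si,\gadot}-\inner{\si,\rmn}\inner{\nabla_\gadot\gadot,\rmn}$.
\item \eqref{hess-wet-x-1} omits the derivatives of $\gadot$ and $\rmn$.
\item \eqref{hess-wet-x-4} discards $\frac{\cos\theta}{\cos\al}\inner{\nabla_\gadot\gadot,\rmn}\abs{s}^2$.
\end{itemize}
What is true is the third line, and that is what the paper actually uses later, in Lemma \ref{lem:vars-combined-general}. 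Your computation, stopped before the integration by parts and combined with the curvature identity above, gives a correct replacement for the fourth line.
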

\begin{proof}
Formula \eqref{firstvar-wet-general} and the first three second variation formulae follow from an argument identical that of Lemma \ref{lem:pullback-derivs}.
So we focus on the last formula \eqref{hess-wet-general}.
We begin, introducing $X$ as notation for the primary object of our attention, by decomposing $\cos\alpha\hat{\nu}=\rmn-\sin\alpha N$ and $v=s+\si$,
\begin{align}
X &= \cos\theta\Area^{\pd M}\rbrak{v,\nabla_{\dot{\ga}}v}
\nonumber\\
&
= \cos\theta\inner{v,\dot{\ga}}\inner{\nabla_{\dot{\ga}}v,\hat{\nu}}
- \cos\theta\inner{v,\hat{\nu}}\inner{\nabla_{\dot{\ga}}v, \dot{\ga}}
\nonumber\\
&
=
\frac{\cos\theta}{\cos\alpha}\inner{\si,\dot{\ga}}\inner{\nabla_{\dot{\ga}}v,\rmn}
- \frac{\cos\theta\sin\alpha}{\cos\alpha}\inner{\si,\dot{\ga}}\inner{\nabla_{\dot{\ga}}v,N}
- \frac{\cos\theta}{\cos\alpha}\inner{\si,\rmn}\inner{\nabla_{\dot{\ga}}v,\dot{\ga}}
\nonumber\\
&
=
\frac{\cos\theta}{\cos\alpha}\inner{\si,\dot{\ga}}\inner{\nabla_{\dot{\ga}}\si,\rmn}
- \frac{\cos\theta}{\cos\alpha}\inner{\si,\rmn}\inner{\nabla_{\dot{\ga}}\si,\dot{\ga}}
- \frac{\cos\theta}{\cos\alpha}\inner{\si,\dot{\ga}}\inner{A(\dot{\ga},\rmn),s}
\nonumber\\&\qquad
+ \frac{\cos\theta}{\cos\alpha}\inner{\si,\rmn}\inner{A(\dot{\ga},\dot{\ga}),s}
- \frac{\cos\theta\sin\alpha}{\cos\alpha}\inner{\si,\dot{\ga}}\inner{A^{\pd M}(\dot{\ga},v),N}.
\label{hess-wet-x}
\end{align}
The second term vanishes, because
\begin{align}
\inner{\nabla_{\dot{\ga}}\si,\dot{\ga}}
&=
\inner{\sbrak{\dot{\ga},\si},\dot{\ga}}
+ \inner{\nabla_\si\dot{\ga},\dot{\ga}}
\nonumber\\
&= \inner{\si,\rmn}\inner{\sbrak{\dot{\ga},\rmn},\dot{\ga}}
\nonumber\\
&= 0,
\label{hess-wet-x-2}
\end{align}
where the last equality follows from our choice of coordinates.
We can now simplify the first term, much of which is integrated out by Stokes' theorem;
indeed, observe that,
\begin{align}
\pd_\tau\rbrak{\frac{\cos\theta}{\cos\alpha}\inner{\si,\dot{\ga}}\inner{\si,\rmn}}
&=
\pd_\tau\rbrak{\frac{\cos\theta}{\cos\alpha}}\inner{\si,\dot{\ga}}\inner{\si,\rmn}
+ \frac{\cos\theta}{\cos\alpha}\inner{\nabla_{\dot{\ga}}\si,\dot{\ga}}\inner{\si,\rmn}
+ \frac{\cos\theta}{\cos\alpha}\inner{\si,\dot{\ga}}\inner{\nabla_{\dot{\ga}}\si,\rmn}
\nonumber
\\
&=
\pd_\tau\rbrak{\frac{\cos\theta}{\cos\alpha}}\inner{\si,\dot{\ga}}\inner{\si,\rmn}
+ \frac{\cos\theta}{\cos\alpha}\inner{\si,\dot{\ga}}\inner{\nabla_{\dot{\ga}}\si,\rmn},
\label{hess-wet-x-1}
\end{align}
of which the latter term is precisely the first term in \eqref{hess-wet-x}.
For the third term, we utilise firstly the fact that $\rmn = \cot\alpha\nu + \frac{1}{\sin\alpha} N$ and secondly that $\nu = \sin\alpha\hat{\nu}-\cos\alpha N$:
\begin{align}
-\frac{\cos\theta}{\cos\alpha}\inner{\si,\dot{\ga}}\inner{A(\dot{\ga},\rmn),s}
&=
-\frac{\cos\theta}{\cos\alpha}\inner{\si,\dot{\ga}}\inner{s,\nu}\inner{\nabla_{\gadot}\rmn,\nu}
\nonumber\\
&=
-\frac{\cos\theta}{\sin\alpha}\inner{\si,\gadot}\inner{s,\nu}\inner{\nabla_{\gadot}\nu,\nu}
- \frac{\cos\theta}{\cos\alpha\sin\alpha}\inner{\si,\gadot}\inner{s,\nu}\inner{\nabla_{\gadot}N,\nu}
\nonumber\\
&=
-\frac{\cos\theta}{\cos\alpha}\inner{\si,\gadot}\inner{s,\nu}\inner{\nabla_{\gadot}N,\hat{\nu}}
+ \frac{\cos\theta}{\sin\alpha}\inner{\si,\gadot}\inner{s,\nu}\inner{\nabla_{\gadot}N,N}
\nonumber\\
&=
\frac{\cos{\theta}}{\cos{\alpha}}\inner{\si,\gadot}\inner{s,\nu}\inner{A^{\pd M}(\dot{\ga},\hat{\nu}),N}.
\label{hess-wet-x-3}
\end{align}
For the fourth term, we use the fact that $\nu = -\frac{1}{\cos\alpha} N + \tan\alpha\rmn$:
\begin{align}
\frac{\cos\theta}{\cos\alpha}\inner{\si,\rmn}\inner{A(\gadot,\gadot),s}
&=
\frac{\cos\theta}{\sin\alpha}\inner{s,\nu}^2\inner{A(\gadot,\gadot),\nu}
\nonumber\\
&=
-\frac{\cos\theta}{\sin\alpha\cos\alpha}\inner{\nabla_{\gadot}\gadot,N}\abs{s}^2
+ \frac{\cos\theta}{\cos\alpha}\inner{\nabla_{\gadot}\gadot,\rmn}\abs{s}^2
\nonumber\\
&=
-\frac{\cos\theta}{\sin\alpha\cos\alpha}\inner{A^{\pd M}(\gadot,\gadot),N}\abs{s}^2.
\label{hess-wet-x-4}
\end{align}
Finally for the fifth term, we simply expand
\begin{align}
-\frac{\cos\theta\sin\alpha}{\cos\alpha}\inner{\si,\gadot}\inner{A^{\pd M}(\gadot,v),N}
&=
-\frac{\cos\theta\sin\alpha}{\cos\alpha}\inner{\si,\gadot}^2\inner{A^{\pd M}(\gadot,\gadot),N}
\nonumber\\&\qquad
-\frac{\cos\theta\sin\alpha}{\cos\alpha}\inner{\si,\gadot}\inner{v,\hat{\nu}}\inner{A^{\pd M}(\gadot,\hat{\nu}),N}
\nonumber\\
&=
-\frac{\cos\theta\sin\alpha}{\cos\alpha}\inner{\si,\gadot}^2\inner{A^{\pd M}(\gadot,\gadot),N}
\nonumber\\&\qquad
-\frac{\cos\theta}{\cos\alpha}\inner{\si,\gadot}\inner{s,\nu}\inner{A^{\pd M}(\gadot,\hat{\nu}),N}.
\label{hess-wet-x-5}
\end{align}
Utilising \eqref{hess-wet-x-2}, \eqref{hess-wet-x-1}, \eqref{hess-wet-x-3}, \eqref{hess-wet-x-4}, and \eqref{hess-wet-x-5} into \eqref{hess-wet-x}
\begin{align}
X &=
\pd_\tau\rbrak{\frac{\cos\theta}{\cos\alpha}\inner{\si,\gadot}\inner{\si,\rmn}}
- \pd_\tau\rbrak{\frac{\cos\theta}{\cos\alpha}}\inner{\si,\gadot}\inner{\si,\rmn}
\nonumber\\&\qquad
- \frac{\cos\theta}{\sin\alpha\cos\alpha}\inner{A^{\pd M}(\gadot,\gadot),N}\abs{s}^2
- \frac{\cos\theta\sin\alpha}{\cos\alpha}\inner{\si,\gadot}^2\inner{A^{\pd M}(\gadot,\gadot),N}
\nonumber\\
&=
\pd_\tau\rbrak{\frac{\cos\theta}{\cos\alpha}\inner{\si,\gadot}\inner{\si,\rmn}}
- \pd_\tau\rbrak{\frac{\cos\theta}{\cos\alpha}}\inner{\si,\gadot}\inner{\si,\rmn}
-\frac{\cos\theta\sin\alpha}{\cos\alpha}\inner{A^{\pd M}(\gadot,\gadot),N}\abs{v}^2,
\nonumber
\end{align}
where the second equality follows because $\abs{v}^2 = \inner{v,\hat{\nu}}^2+\inner{v,\dot{\ga}}^2 = \frac{1}{\sin^2\alpha}\abs{s}^2 + \inner{v,\dot{\ga}}^2$.
\end{proof}

\subsection{The combined functionals}

Combining the three functionals, we get the following.

\begin{lemma}
\label{lem:vars-combined-general}
Let $u:\Si\to M$ be a boundaried branched immersion.
Then for any $v\in\Ga_{\pd M}(u^*TM)$, we have
\begin{align*}
\firstvar{\calA^{h,\theta}}{u}{v}
&=
\geomint{\Si}{
	\inner{s,h\nu - H}
\,\ed\Si}
+\geomint{\pd\Si}{
	\inner{\si - \frac{\cos\theta}{\cos\alpha}\si,\rmn}
\,\ed\tau},
\end{align*}
where $\cos\alpha=\inner{\rmn,\hat{\nu}}$.
Furthermore, we have
\begin{align}
\secondvar{\calA^{h,\theta}}{u}{v,v}
&=
\begin{multlined}[t]
\geomint{\Si}{
\inner{-\Delta^\perp s,s}
- \Ric^M(s,s)
+ \inner{s,H}\inner{s,H-h\nu}
+ \inner{A(\si,\si),H-h\nu}
\\
- \inner{s,A(E_i,E_j)}^2
+ 2\inner{\nabla_\si s, H-h\nu}
+ \rbrak{\nabla_\nu h} \abs{s}^2
\,\ed\Si}
\end{multlined}
\nonumber\\&\qquad
\begin{multlined}[b]
+ \geomint{\pd\Si}{
	\inner{\nabla_\rmn s,s}
	+ \dvg_\Si\si\inner{\si,\rmn}
	+ \inner{s,h\nu - 2H}\inner{\si,\rmn}
	- 2\inner{\nabla_\si s,\rmn}
	- \inner{\nabla_\si\si,\rmn}
\\
	- \rbrak{\nabla_v\cos\theta}\inner{v,\hat{\nu}}
	+ \cos\theta\inner{v,\dot{\ga}}\inner{\nabla_{\dot{\ga}}v,\hat{\nu}}
	- \cos\theta\inner{v,\hat{\nu}}\inner{\nabla_{\dot{\ga}}v,\dot{\ga}}
\,\ed\tau}.
\end{multlined}
\label{hess-combined-general}
\end{align}
\end{lemma}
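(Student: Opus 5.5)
The plan is to obtain both formulae by adding the corresponding results for the three constituent functionals, since $\calA^{h,\theta}=\calA+\calV^h+\calW^\theta$ and first and second variations are additive. For the first variation I will add $\firstvar{\calA}{u}{v}=-\int_\Si\inner{s,H}+\int_{\pd\Si}\inner{\si,\rmn}$, the volume formula \eqref{firstvar-vol-gen}, $\int_\Si\inner{v,h\nu}=\int_\Si\inner{s,h\nu}$, and the wetting formula \eqref{firstvar-wet-general}, $-\int_{\pd\Si}\cos\theta\inner{v,\hat\nu}$. The only genuine step is to rewrite $\inner{v,\hat\nu}$ in terms of $\inner{\si,\rmn}$. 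For this I use the decomposition $\cos\alpha\,\hat\nu=\rmn-\sin\alpha N$ from the proof of \eqref{hess-wet-general}. Since $v\in\Ga_{\pd M}(u^*TM)$ we have $\inner{v,N}=0$, hence $\inner{v,\hat\nu}=\inner{v,\rmn}/\cos\alpha$. Moreover $\rmn$ is tangent to $\Si$, so $\inner{v,\rmn}=\inner{\si,\rmn}$. This gives the boundary integrand $\inner{\si-\tfrac{\cos\theta}{\cos\alpha}\si,\rmn}$, and the interior terms combine to $\inner{s,h\nu-H}$.

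For the second variation I will sum \eqref{eqn:secondvar-area}, the detailed volume Hessian \eqref{hess-vol-gen-detail}, and the third line of the wetting Hessian. The wetting terms are kept unsimplified, which is exactly the second boundary line in the claim. The interior terms should then be collected as follows.
\begin{itemize}
\item $\inner{s,H}^2-\inner{s,h\nu}\inner{s,H}=\inner{s,H}\inner{s,H-h\nu}$.
\item $\inner{A(\si,\si),H}-\inner{A(\si,\si),h\nu}=\inner{A(\si,\si),H-h\nu}$.
\item $2\inner{\nabla_\si s,H}-2\inner{\nabla_\si s,h\nu}=2\inner{\nabla_\si s,H-h\nu}$.
\item The term $(\nabla_\nu h)\abs{s}^2$ is carried over unchanged.
\end{itemize}
On the boundary, the volume contribution $\inner{s,h\nu}\inner{\si,\rmn}$ combines with $-2\inner{s,H}\inner{\si,\rmn}$ from the area to give $\inner{s,h\nu-2H}\inner{\si,\rmn}$. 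The remaining area boundary terms are copied verbatim.

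The main point to check is consistency of conventions. The second variation here means $\calV''(0)-\firstvar{\calV}{u}{\nabla_vv}$, and similarly for the other functionals, so the same definition must be used for all three summands for the Hessians to add. The minus sign in $-\inner{\nabla_\si\si,\rmn}$ in the area formula must also match. I take the area lemma \eqref{eqn:secondvar-area} as already given in that same normalisation, so no further computation is needed beyond this bookkeeping.
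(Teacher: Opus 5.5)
Your proposal is correct and follows the paper's route. The paper obtains the lemma simply by adding the area, volume and wetting formulae, giving no further argument. Your bookkeeping of the interior and boundary terms reproduces the stated formulae, including the rewriting $\inner{v,\hat\nu}=\inner{\si,\rmn}/\cos\alpha$ via $\cos\alpha\,\hat\nu=\rmn-\sin\alpha N$ and $\inner{v,N}=0$. Your normalisation concern is also resolved: \eqref{eqn:secondvar-area} is exactly \eqref{eqn:twoderivs-area} minus $\firstvar{\calA}{u}{\nabla_vv}$, the same convention used for $\calV$ and $\calW$.
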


Of course, it follows from this first variation formula that $\ed\calA^{h,\theta}(u)\equiv0$ if and only if $u$ is $h\nu = H$ and $\cos\alpha=\cos\theta$.

\begin{lemma}
\label{lem:hess-mod-area-cmc-cap}
Suppose that $h$ and $\theta$ are constants, and that $\ed\calA^{h,\theta}(u)\equiv0$ on $\Ga_{\pd M}(u^*TM)$, \ie{}$u:\Si\to M$ is an $h$-\cmc{} $\theta$-capillary surface.
Then for any $v\in\Ga_{\pd M}(u^*TM)$, we have
\begin{multline}
\secondvar{\calA^{h,\theta}}{u}{v,v}
=
\geomint{\Si}{
\inner{-\Delta^\perp s,s}
- \Ric^M(s,s)
- \inner{s,A(E_i,E_j)}^2
\,\ed\Si}
\\
+ \geomint{\pd\Si}{
	\inner{\nabla_\rmn s,s}
	+ \rbrak{\cot\theta\inner{A(\rmn,\rmn),\nu}
		+ \frac{1}{\sin\theta}\inner{A^{\pd M}(\hat{\nu},\hat{\nu}),N}
	}\abs{s}^2
\,\ed\tau}.
\label{eqn:hess-mod-area-cmc-cap}
\end{multline}
\end{lemma}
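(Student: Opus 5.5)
We specialise Lemma \ref{lem:vars-combined-general} to constant $h$, $\theta$ and impose the criticality conditions $H=h\nu$ and $\alpha=\theta$ (that is, $\cos\alpha=\cos\theta$, so also $\sin\alpha=\sin\theta$). In the interior this is immediate. Every term containing $H-h\nu$ vanishes, and so does $\nabla_\nu h$. Writing $s=f\nu$ gives $\inner{s,H}\inner{s,H-h\nu}=0$. What remains is $\inner{-\Delta^\perp s,s}-\Ric^M(s,s)-\inner{s,A(E_i,E_j)}^2$, which is exactly the interior part of \eqref{eqn:hess-mod-area-cmc-cap}.

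For the boundary integral, we first discard $\nabla_v\cos\theta$, which is zero. We then rewrite the two wetting terms using the last line of \eqref{hess-wet-general}. Since $\cos\theta/\cos\alpha\equiv1$, the $\pd_\tau$-term drops out, and what is left is $-\sin\theta\inner{A^{\pd M}(\gadot,\gadot),N}\abs{v}^2$. We therefore have to show that
$$\dvg_\Si\si\inner{\si,\rmn}-h\inner{s,\nu}\inner{\si,\rmn}-2\inner{\nabla_\si s,\rmn}-\inner{\nabla_\si\si,\rmn}-\sin\theta\inner{A^{\pd M}(\gadot,\gadot),N}\abs{v}^2$$
equals $\rbrak{\cot\theta\inner{A(\rmn,\rmn),\nu}+\frac{1}{\sin\theta}\inner{A^{\pd M}(\hat\nu,\hat\nu),N}}\abs{s}^2$, possibly up to a total $\pd_\tau$-derivative, which integrates to zero over the closed curves $\pd\Si$. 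Here $h\inner{s,\nu}=\inner{s,H}$.

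To do this we parametrise everything on $\pd\Si$ by two functions. Write $\si=a\gadot-g\rmn$ and $s=f\nu$. The admissibility of $v$ (that is, $\inner{v,N}=0$) fixes $g$ through $f$; by the paper's own footnote, $\inner{\si,\rmn}=\cot\theta f$. We also need the frame relations from Figure~\ref{fig:normals}: $\rmn=\cot\alpha\,\nu+\frac{1}{\sin\alpha}N$ and $\nu=\sin\alpha\hat\nu-\cos\alpha N$. Next we expand each term, taking $\rmn,\gadot$ to be extended near the boundary by the conformal chart.

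The term $-2\inner{\nabla_\si s,\rmn}$ becomes $2f\inner{A(\si,\rmn),\nu}$, since $\inner{\nu,\rmn}=0$. Its $\gadot$-component $2af\inner{A(\gadot,\rmn),\nu}$ is the one that must cancel against something. Expanding $\dvg_\Si\si$ and $\inner{\nabla_\si\si,\rmn}$ produces the pieces $\pd_\rmn g$ and $\pd_\tau a$, together with the geodesic curvature $\kappa=\inner{\nabla_\gadot\gadot,\rmn}$ of $\pd\Si$. As in the proof of Theorem~\ref{thm:area-energy-comparison}, the $\pd_\rmn g$ pieces cancel, and $g\,\pd_\tau a-a\,\pd_\tau g$ combines with the derivative of $g=\cot\theta f$.

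We then convert the mixed $af$ terms using the identity \eqref{hess-wet-x-3}: $\inner{A(\gadot,\rmn),\nu}=-\inner{A^{\pd M}(\gadot,\hat\nu),N}$. The idea is that the mixed terms coming from the area part and those coming from the $\sin\theta$ wetting curvature term then cancel, or combine into $\pd_\tau(\cdots)$.

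For the pure $f^2$ terms we use two relations. First, \eqref{hess-wet-x-4} gives $\inner{A(\gadot,\gadot),\nu}\sin\alpha=-\inner{A^{\pd M}(\gadot,\gadot),N}/\cos\alpha+\dots$. Second, $H=h\nu$ gives $\inner{A(\gadot,\gadot),\nu}+\inner{A(\rmn,\rmn),\nu}=h$. The target is to collect $\cot\theta\inner{A(\rmn,\rmn),\nu}$, and then to use $\abs{v}^2=f^2/\sin^2\theta+a^2$ together with $\inner{A^{\pd M}(\gadot,\gadot),N}+\inner{A^{\pd M}(\hat\nu,\hat\nu),N}=H^{\pd M}$-type decompositions. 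This should produce $\frac{1}{\sin\theta}\inner{A^{\pd M}(\hat\nu,\hat\nu),N}f^2$, and the $a^2$ curvature terms must cancel against $\kappa$-terms via $\kappa=\cos\theta\,\cdot$ (a curvature of $\pd M$) plus $\sin\theta\,\cdot$ (a curvature of $\Si$).

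The main obstacle is this last bookkeeping on the boundary. Tangential terms quadratic in $a$ have to cancel exactly, and this needs the precise decomposition of $\nabla_\gadot\gadot$ along $\nu$ and $N$. The sign conventions for $\hat\nu,\rmn,N$ in Figure~\ref{fig:normals} are easy to get wrong. As a consistency check, we will verify the case $\theta=\pi/2$ against the known free-boundary formula.
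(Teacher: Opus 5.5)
The step that breaks your argument is replacing the two wetting terms by the last line of \eqref{hess-wet-general}. That line is not equivalent, even modulo $\pd_\tau$-derivatives, to the raw expression that actually appears in \eqref{hess-combined-general}.

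\emph{Where the substitution fails.} Along $\pd\Si$ write $v=a\gadot+c\hat\nu$, where $c=\inner{v,\hat\nu}=f/\sin\theta$ because $\inner{v,N}=0$. A direct expansion gives
$\cos\theta\inner{v,\gadot}\inner{\nabla_\gadot v,\hat\nu}-\cos\theta\inner{v,\hat\nu}\inner{\nabla_\gadot v,\gadot}=\cos\theta\bigl(\abs{v}^2\inner{\nabla_\gadot\gadot,\hat\nu}+a\,\pd_\tau c-c\,\pd_\tau a\bigr)$.
The derivation of the last line sets $\inner{\nabla_\gadot\si,\gadot}$ and $\kappa=\inner{\nabla_\gadot\gadot,\rmn}$ to zero ``by choice of coordinates''. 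In fact $\kappa$ is the geodesic curvature of $\pd\Si$ in $\Si$, and $\inner{\nabla_\gadot\si,\gadot}=\pd_\tau a-\kappa\inner{\si,\rmn}$. Your own $\theta\to\pi/2$ check would expose this: the raw terms vanish like $\cos\theta$, whereas the last line tends to $-\inner{A^{\pd M}(\gadot,\gadot),N}\abs{v}^2$.

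\emph{Why the identity you aim for is false.} In your expression one has exactly $\dvg_\Si\si\inner{\si,\rmn}-\inner{\nabla_\si\si,\rmn}=\cot\theta\,(f\pd_\tau a-a\pd_\tau f)-\kappa\abs{\si}^2$, and no other term contains a derivative of $a$ or $f$. Since $\inner{\si,\rmn}$ is a constant multiple of $f$, the Wronskian term $\cot\theta(f\pd_\tau a-a\pd_\tau f)$ is not a $\pd_\tau$-derivative; take $a=\cos k\tau$, $f=\sin k\tau$ with $k$ large. So it cannot ``combine with the derivative of $g$''. The pure $a^2$ terms also fail to cancel: they sum to $-a^2\bigl(\kappa+\sin\theta\inner{A^{\pd M}(\gadot,\gadot),N}\bigr)\neq0$. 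In the raw form, the Wronskian term is cancelled exactly by $\cos\theta(a\pd_\tau c-c\pd_\tau a)$.

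\emph{What the paper does instead.} It works directly from \eqref{hess-combined-general} and substitutes $\cos\theta\hat\nu=\rmn-\sin\theta N$ into the raw wetting terms. Then $\dvg_\Si\si\inner{\si,\rmn}$ and $\inner{\nabla_\si\si,\rmn}$ cancel pointwise, with no $\kappa$-bookkeeping and no exact derivatives. One is left with $X=\inner{A(\si,\rmn),s}-\sin\theta\inner{v,\gadot}\inner{A^{\pd M}(\gadot,v),N}$.

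\emph{The second missing ingredient.} Even at this point the tangential terms do not cancel among themselves. Using $\inner{A(\gadot,\rmn),\nu}=-\inner{A^{\pd M}(\gadot,\hat\nu),N}$ one finds, pointwise,
$X=\cot\theta\inner{A(\rmn,\rmn),\nu}f^2-2af\inner{A^{\pd M}(\gadot,\hat\nu),N}-\sin\theta\,a^2\inner{A^{\pd M}(\gadot,\gadot),N}$.
This is the target minus $\sin\theta\inner{A^{\pd M}(v,v),N}$. The paper converts the $a^2$ and $af$ terms into $\frac{1}{\sin\theta}\inner{A^{\pd M}(\hat\nu,\hat\nu),N}f^2$ via \eqref{hess-combined-x-a2}, its expansion of $\inner{\nabla_vv,N}$ in the frame $\gadot,\hat\nu$. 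Your plan has no counterpart of this step.

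Be careful with that step, though. $\inner{A^{\pd M}(v,v),N}$ is not zero in general; it equals $-\abs{v}^2$ for the unit ball. What legitimately supplies the missing term is the $N$-component $\inner{\nabla_t\pd_t\Phi,N}=\inner{A^{\pd M}(v,v),N}$ of the acceleration of a variation with $\Phi(t)(\pd\Si)\subset\pd M$. At a critical point this enters $\ed\calA^{h,\theta}(u)[\nabla_t\pd_t\Phi]$ with weight $\sin\theta$. So the stated formula is the genuine second derivative of $t\mapsto\calA^{h,\theta}(\Phi(t))$.

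A quick test is the rotation field on a spherical capillary cap in the unit ball. The genuine second derivative there is $0$. The Hessian \eqref{hess-combined-general} is $\int_{\pd\Si}\sin\theta\abs{v}^2\,\ed\tau>0$, and your reduced expression is nonzero as well.
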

\begin{proof}
Beginning with \eqref{hess-combined-general}, we take $h\nu=H$ and $\cos\theta=\cos\alpha$, giving
\begin{align*}
\secondvar{\calA^{h,\theta}}{u}{v,v}
&=
\geomint{\Si}{
\inner{-\Delta^\perp s,s}
- \Ric^M(s,s)
- \inner{s,A(E_i,E_j)}^2
\,\ed\Si}
\\&\qquad
\begin{multlined}[t]
+ \geomint{\pd\Si}{
	\inner{\nabla_\rmn s,s}
	+ \dvg_\Si\si\inner{\si,\rmn}
	- \inner{s,H}\inner{\si,\rmn}
	- 2\inner{\nabla_\si s,\rmn}
	- \inner{\nabla_\si\si,\rmn}
\\
	+ \cos\theta\inner{v,\dot{\ga}}\inner{\nabla_{\dot{\ga}}v,\hat{\nu}}
	- \cos\theta\inner{v,\hat{\nu}}\inner{\nabla_{\dot{\ga}}v,\dot{\ga}}
\,\ed\tau}.
\end{multlined}
\end{align*}
We focus then on the terms on the boundary.
For convenience, let us define
\begin{equation}
X:=
\dvg_\Si\si\inner{\si,\rmn}
- \inner{s,H}\inner{\si,\rmn}
- 2\inner{\nabla_\si s,\rmn}
- \inner{\nabla_\si\si,\rmn}
+ \cos\theta\inner{v,\dot{\ga}}\inner{\nabla_{\dot{\ga}}v,\hat{\nu}}
- \cos\theta\inner{v,\hat{\nu}}\inner{\nabla_{\dot{\ga}}v,\dot{\ga}},
\label{hess-combined-x}
\end{equation}
which we shall use later.

Since
$\hat{\nu} = \frac{1}{\cos\theta}\rmn - \tan\theta N$,
we have that
\begin{align}
\cos\theta\inner{v,\dot{\ga}}\inner{\nabla_{\dot{\ga}}v,\hat{\nu}}
&=
\inner{v,\dot{\ga}}\inner{\nabla_{\dot{\ga}}v,\rmn}
- \sin\theta\inner{v,\dot{\ga}}\inner{\nabla_{\dot{\ga}}v,N}
\nonumber\\
&=
\inner{v,\dot{\ga}}\inner{\nabla_{\dot{\ga}}\si,\rmn}
+ \inner{v,\dot{\ga}}\inner{\nabla_{\dot{\ga}}s,\rmn}
- \sin\theta\inner{v,\dot{\ga}}\inner{A^{\pd M}\rbrak{\dot{\ga},v},N}
\nonumber\\
&=
\inner{\nabla_\si\si,\rmn}
- \inner{\si,\rmn}\inner{\nabla_\rmn\si,\rmn}
- \inner{\si,\dot{\ga}}\inner{A(\dot{\ga},\rmn),s}
- \sin\theta\inner{v,\dot{\ga}}\inner{A^{\pd M}\rbrak{\dot{\ga},v},N}
\nonumber
\end{align}
Secondly, we have
\begin{align}
-\cos\theta\inner{v,\hat{\nu}}\inner{\nabla_{\dot{\ga}}v,\dot{\ga}}
&=
-\inner{\nabla_{\dot{\ga}}v,\dot{\ga}}\inner{v,\rmn}
\nonumber\\
&=
- \inner{\nabla_{\dot{\ga}}\si,\dot{\ga}}\inner{\si,\rmn}
- \inner{\nabla_{\dot{\ga}}s,\dot{\ga}}\inner{\si,\rmn}
\nonumber\\
&=
- \inner{\si,\rmn}\inner{\nabla_{\dot{\ga}}\si,\dot{\ga}}
+ \inner{A(\dot{\ga},\dot{\ga}),\nu}\inner{s,\nu}\inner{\si,\rmn}.
\nonumber
\end{align}
Combining these two computations gives
\begin{multline*}
\cos\theta\inner{v,\dot{\ga}}\inner{\nabla_{\dot{\ga}},\hat{\nu}}
-\cos\theta\inner{v,\hat{\nu}}\inner{\nabla_{\dot{\ga}}v,\dot{\ga}}
=
\inner{\nabla_\si\si,\rmn}
- \inner{\si,\rmn}\dvg_\Si\si
+ \inner{A(\dot{\ga},\dot{\ga}),\nu}\inner{\si,\rmn}\inner{s,\nu}
\\
- \inner{A(\dot{\ga},\rmn),\nu}\inner{\si,\dot{\ga}}\inner{s,\nu}
- \sin\theta\inner{v,\dot{\ga}}\inner{A^{\pd M}\rbrak{\dot{\ga},v},N}.
\end{multline*}
But
\begin{align}
-2\inner{\nabla_\si s,\rmn}
&=
2\inner{A(\si,\rmn),s}
\nonumber\\
&=
2\inner{A(\dot{\ga},\rmn),\nu}\inner{\si,\dot{\ga}}\inner{s,\nu}
+ 2\inner{A(\rmn,\rmn),\nu}\inner{\si,\rmn}\inner{s,\nu},
\nonumber
\end{align}
and so combining this with \eqref{hess-combined-x} to compute $X$ yeilds many cancellations, giving
\begin{align}
X
&=
\inner{A(\si,\rmn),s}
- \sin\theta\inner{v,\dot{\ga}}\inner{A^{\pd M}\rbrak{\dot{\ga},v},N}.
\nonumber\\
&=
\inner{A(\rmn,\rmn),\nu}\inner{\si,\rmn}\inner{s,\nu}
+ \inner{A(\dot{\ga},\rmn),\nu}\inner{\si,\dot{\ga}}\inner{s,\nu}
- \sin\theta\inner{v,\dot{\ga}}\inner{A^{\pd M}\rbrak{\dot{\ga},v},N}
\nonumber
\intertext{and since $\inner{\si,\rmn}=\inner{v,\rmn}=\cot\theta\inner{v,\nu}$,}
&=
\cot\theta\inner{A(\rmn,\rmn),\nu}\inner{s,\nu}^2
+ \inner{A(\dot{\ga},\rmn),\nu}\inner{\si,\dot{\ga}}\inner{s,\nu}
- \sin\theta\inner{v,\dot{\ga}}\inner{A^{\pd M}\rbrak{\dot{\ga},v},N}.
\label{hess-combined-x-a}
\end{align}
Let us focus on the second term.
Since $\nu=-\frac{1}{\cos\theta}N+\tan\theta\rmn$, we have
\begin{align}
\inner{A(\dot{\ga},\rmn),\nu}\inner{s,\nu}\inner{\si,\dot{\ga}}
&=
- \sec\theta\inner{\nabla_{\dot{\ga}}\rmn,N}\inner{s,\nu}\inner{\si,\dot{\ga}}
\nonumber
\intertext{and since also $\rmn=\cos\theta\hat{\nu}+\sin\theta N$,}
&=
- \inner{\nabla_{\dot{\ga}}\hat{\nu},N}\inner{s,\nu}\inner{\si,\dot{\ga}}
- \frac{\sin\theta}{\cos\theta}\inner{\nabla_{\dot{\ga}}N,N}\inner{s,\nu}\inner{\si,\dot{\ga}}
\nonumber\\&
=
- \inner{A^{\pd M}(\dot{\ga},\hat{\nu}),N}\inner{s,\nu}\inner{\si,\dot{\ga}}.
\label{hess-combined-x-a1}
\end{align}
Notice however that
\begin{align}
0 &= \inner{\nabla_vv,N}
\nonumber\\&
=
\inner{A^{\pd M}(\dot{\ga},\dot{\ga}),N}\inner{v,\dot{\ga}}^2
+ \inner{A^{\pd M}(\hat{\nu},\hat{\nu}),N}\frac{\abs{s}^2}{\sin^2\theta}
+ 2\inner{A^{\pd M}(\dot{\ga},\hat{\nu}),N}\inner{v,\dot{\ga}}\inner{v,\hat{\nu}}
\nonumber\\&
=
\inner{A^{\pd M}(\dot{\ga},v),N}\inner{v,\dot{\ga}}
+ \inner{A^{\pd M}(\hat{\nu},\hat{\nu}),N}\frac{\abs{s}^2}{\sin^2\theta}
+ \inner{A^{\pd M}(\dot{\ga},\hat{\nu}),N}\inner{v,\dot{\ga}}\frac{\inner{v,\nu}}{\sin\theta}.
\label{hess-combined-x-a2}
\end{align}
Combining \eqref{hess-combined-x-a}, \eqref{hess-combined-x-a1}, and \eqref{hess-combined-x-a2} gives
$$
X
=
\cot\theta\inner{A(\rmn,\rmn),\nu}\abs{s}^2
+ \inner{A^{\pd M}(\hat{\nu},\hat{\nu}),N}\frac{\abs{s}^2}{\sin\theta},
$$
as desired.
\end{proof}

\begin{lemma}
\label{lem:hess-mod-energy}
Let $u:\Si\to M$ be a boundaried branched conformal immersion.
Then for any $v\in\Ga_{\pd M}(u^*TM)$, we have
\begin{multline}
\secondvar{\calE^{h,\theta}}{u}{v,v}
= \geomint{\Si}{
	\abs{\nabla v}^2
	- \Rm^M(v,E_i,E_i,v)
\\
	+ h\rbrak{
		\Vol^M(v,\nabla_{e^{-\la}u_x}v,e^{-\la}u_y)
		+ \Vol^M(v,e^{-\la}u_x,\nabla_{e^{-\la}u_y}v)
	}
	\,\ed\Si
}
\\
- \geomint{\pd\Si}{
\sin\theta\inner{A^{\pd M}(\dot{\ga},\dot{\ga}),N}\abs{v}^2
\,\ed\tau.
}
\end{multline}
\end{lemma}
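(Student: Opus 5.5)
The second variation of $\calE^{h,\theta}=\calE+\calV^h+\calW^\theta$ at $u$ is the sum of the three separate second variations. Two of these are already available: $\secondvar{\calV^h}{u}{v,v}$ is given by \eqref{hess-vol-gen} with $h$ constant, so $\nabla_vh=0$; and $\secondvar{\calW^\theta}{u}{v,v}$ is given by \eqref{hess-wet-general} with $\theta$ constant, so $\nabla_v\cos\theta=0$. The only new ingredient is the Hessian of the energy. I will compute it by the same embedding trick used for the area. Write $\Phi(t)=u+tv+\tfrac12t^2w+O(t^3)$ in $\R^d$ and differentiate $\tfrac12\int|D\Phi|^2$ twice. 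Then split $Dv=\nabla v+\sff(\cdot,v)$, use the Gauss equation, and use $w^{\perp_M}=\sff(v,v)$. This gives
$$\derivat{^2}{t^2}{t=0}\calE(\Phi(t))=\int_\Si|\nabla v|^2-\Rm^M(v,E_i,E_i,v)+\dvg_\Si(\nabla_vv)\,\ed\Si,$$
which is the expression quoted in the proof of Theorem \ref{thm:area-energy-comparison}. The Hessian subtracts $\firstvar{\calE}{u}{\nabla_vv}$. Since $u$ is conformal, $\ed\calE(u)=\ed\calA(u)$, and the first-variation lemma for the area gives
$$\firstvar{\calA}{u}{\nabla_vv}=-\int\inner{\nabla_vv,H}+\int_{\pd\Si}\inner{\nabla_vv,\rmn}.$$
Because $\dvg_\Si(\nabla_vv)$ integrates to exactly this, the divergence term cancels in the Hessian.

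Next I assemble the pieces. The volume contribution $\int u^*(\iota_v\ed\iota_v\om-\iota_{\nabla_vv}\om)$, written in the form \eqref{hess-vol-gen}, is directly the $h\Vol^M$ terms in the claimed formula once one uses conformality, i.e.\ $e^{-2\la}\,\ed x\wedge\ed y$ is $\ed\Si$ after rescaling the frame $u_x,u_y$ to $e^{-\la}u_x,e^{-\la}u_y$. So the whole statement reduces to a boundary identity. I must show that the wetting Hessian, plus whatever boundary terms are hidden in $|\nabla v|^2$, produces exactly $-\sin\theta\inner{A^{\pd M}(\gadot,\gadot),N}|v|^2$. Here I read $|\nabla v|^2$ as the unintegrated Dirichlet density, so that $\inner{v,\nabla_\rmn v}$ in Definition \ref{def:index} appears after integrating by parts.

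From \eqref{hess-wet-general} with constant $\theta$, the wetting Hessian equals
$$-\int_{\pd\Si}\pd_\tau\!\Big(\frac{\cos\theta}{\cos\alpha}\Big)\inner{\si,\gadot}\inner{\si,\rmn}+\frac{\cos\theta\sin\alpha}{\cos\alpha}\inner{A^{\pd M}(\gadot,\gadot),N}|v|^2.$$
To simplify this I intend to use the capillarity of $u$, namely $\alpha=\theta$ along $\pd\Si$. This kills the $\pd_\tau$ term and turns the coefficient into $\sin\theta$, which gives the result immediately. The hypothesis as stated only says boundaried conformal, but the lemma is used in Definition \ref{def:index} and Theorem \ref{thm:energy-index-bound} only for capillary surfaces, so I would state clearly that $\cos\alpha=\cos\theta$ is used. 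Alternatively one could read $\theta$ in the formula as the actual contact angle $\alpha$. If the statement must hold without capillarity, the leftover term must be rewritten as an exact $\tau$-derivative or absorbed. That looks false in general, so I expect this to be the main obstacle: pinning down exactly which hypotheses make the boundary terms collapse, together with sign and orientation conventions for $\hat\nu$, $\rmn$ and $N$ (the step $\cos\alpha\,\hat\nu=\rmn-\sin\alpha N$).
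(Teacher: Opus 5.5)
The paper states this lemma without a proof of its own. The assembly you describe (energy, volume and wetting Hessians added together) is evidently the intended one. Your interior computation is correct. Because $u$ is conformal, $\rbrak{\inner{\nabla_{\pd_x}X,u_x}+\inner{\nabla_{\pd_y}X,u_y}}\ed x\wedge\ed y=\dvg_\Si X\,\ed\Si$ holds pointwise for \emph{every} section $X$, so it does not matter that $\nabla_vv\notin\Ga_{\pd M}(u^*TM)$. Hence $\dvg_\Si(\nabla_vv)$ cancels against $\firstvar{\calE}{u}{\nabla_vv}$, and the energy contributes $\int_\Si\abs{\nabla v}^2-\Rm^M(v,E_i,E_i,v)$ with no boundary term. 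Likewise \eqref{hess-vol-gen} with constant $h$ gives exactly the $h\Vol^M$ terms.

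The gap is the boundary step, where you take the last line of \eqref{hess-wet-general} on trust; it does not follow from the line above it. For constant $\theta$ the wetting Hessian is $\geomint{\pd\Si}{\cos\theta\,\Area^{\pd M}(v,\nabla_{\gadot}v)\,\ed\tau}$. Write $v=a\gadot+b\hat{\nu}$ along $\pd\Si$, and let $\kappa=\inner{\nabla_{\gadot}\gadot,\hat{\nu}}$ be the geodesic curvature of $u(\pd\Si)$ in $\pd M$. Then, up to an orientation sign, this Hessian is $\cos\theta\geomint{\pd\Si}{\rbrak{a\pd_\tau b-b\pd_\tau a+\kappa\abs{v}^2}\ed\tau}$. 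That expression contains neither $\sin\theta$ nor $A^{\pd M}$, and setting $\alpha=\theta$ does nothing to it. The reduction in the appendix loses terms in three places:
\eqref{hess-wet-x-2} drops $\pd_\tau\inner{\si,\gadot}$;
the product rule \eqref{hess-wet-x-1} omits the terms in which $\gadot$ and $\rmn$ are differentiated;
the last step of \eqref{hess-wet-x-4} discards $\frac{\cos\theta}{\cos\alpha}\inner{\nabla_{\gadot}\gadot,\rmn}\abs{s}^2$.

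In fact the identity you are aiming at is false for $\theta\in(0,\pi/2)$, so no choice of hypotheses (capillarity, or reading $\theta$ as $\alpha$) rescues that step. Take $M$ a half-space and $u$ a conformally parametrised spherical cap meeting the plane $\pd M$ at constant angle $\theta$ along a circle of radius $r$ and centre $c$. Let $h$ be any constant, and let $\Phi(t)=c+(1+t)(u-c)$, so that $v=u-c\in\Ga_{\pd M}(u^*TM)$ and the acceleration vanishes. Then $\Rm^M=0$, $A^{\pd M}=0$, and $\frac{\ed^2}{\ed t^2}\calE(\Phi(t))=2\calA(u)=\int_\Si\abs{\nabla v}^2\,\ed\Si$. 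Also $\frac{\ed^2}{\ed t^2}\calV^h(\Phi(t))$ equals the integral of the $h\Vol^M$ terms. So the claimed formula would force $\frac{\ed^2}{\ed t^2}\calW^\theta(\Phi(t))=0$. But $\calW^\theta(\Phi(t))=\pm\pi r^2\cos\theta\,\rbrak{(1+t)^2-1}$, whose second derivative is $\pm2\pi r^2\cos\theta\neq0$.

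What your argument genuinely proves, for any boundaried conformal $u$ and constant $h,\theta$, is the formula with boundary term $\geomint{\pd\Si}{\cos\theta\,\Area^{\pd M}(v,\nabla_{\gadot}v)\,\ed\tau}$ in place of the $A^{\pd M}$ term. A term of type $\sin\theta\,A^{\pd M}$ only enters if you replace the Hessian by the second derivative along a boundaried variation at a capillary surface. That means adding $\firstvar{\calE^{h,\theta}}{u}{a}=\geomint{\pd\Si}{\sin\theta\inner{a,N}\,\ed\tau}$ for the acceleration $a$, whose normal part $\inner{a,N}=-\inner{v,\nabla_vN}$ is forced by the constraint. That gives $A^{\pd M}(v,v)$, which equals $A^{\pd M}(\gadot,\gadot)\abs{v}^2$ only where $\pd M$ is umbilic, and which vanishes in the example above.
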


\printbibliography

\bigskip
\noindent
\url{https://lseem.xyz}\\
\textsc{School of Mathematics, University of Leeds\\
Leeds LS2 9JT, United Kingdom.}

\end{document}